\newtheorem{theorem}{Theorem}[section]
\newtheorem{lemma}[theorem]{Lemma}
\newtheorem{proposition}[theorem]{Proposition}
\newtheorem{corollary}[theorem]{Corollary}
\theoremstyle{definition}
\newtheorem{example}[theorem]{Example}
\newtheorem{remark}[theorem]{Remark}
\newcommand{\id}{\text{id}}
\newcommand{\Hom}{\text{Hom}}
\newcommand{\Aut}{\text{Aut}}
\newcommand{\CoInt}{\text{CoInt}}
\newcommand{\CoInn}{\text{CoInn}}
\newcommand{\Inn}{\text{Inn}}
\newcommand{\Int}{\text{Int}}
\newcommand{\Rep}{\text{Rep}}
\newcommand{\Supp}{\text{Supp}}
\newcommand{\Corep}{\text{Corep}}
\newcommand{\Vect}{\text{Vec}}
\newcommand{\g}{\mathfrak{g}}
\newcommand{\h}{\mathfrak{h}}
\renewcommand{\b}{\mathfrak{b}}
\newcommand{\ot}{\otimes}
\newcommand{\ben}{\begin{enumerate}}
\newcommand{\een}{\end{enumerate}}
\newcommand{\ad}{{\text{Ad}}}
\newcommand{\Lie}{{\text{Lie}}}
\newcommand{\Out}{{\text{Out}}}
\newcommand{\C}{{\mathcal C}}
\begin{document}

\title[Invariant Hopf $2$-cocycles for affine algebraic groups] {Invariant Hopf $2$-cocycles for affine algebraic groups}

\author{Pavel Etingof}
\address{Department of Mathematics, Massachusetts Institute of Technology,
Cambridge, MA 02139, USA} \email{etingof@math.mit.edu}

\author{Shlomo Gelaki}
\address{Department of Mathematics, Technion-Israel Institute of
Technology, Haifa 32000, Israel} \email{gelaki@math.technion.ac.il}

\date{\today}

\keywords{affine algebraic groups; tensor categories;
second invariant cohomology group}

\begin{abstract}
We generalize the theory of the second invariant cohomology  group $H^2_{\rm inv}(G)$ for finite groups $G$, developed in  \cite{Da2,Da3,GK}, to the case of affine algebraic groups $G$, using the methods of \cite{EG1,EG2,G}. In particular, we show that for connected affine algebraic groups $G$ over an algebraically closed field of characteristic $0$, the map $\Theta$ from \cite{GK} is bijective (unlike for some finite groups, as shown in \cite{GK}). This allows us to compute $H^2_{\rm inv}(G)$ in this case, and in particular show that this group is commutative (while for finite groups it can be noncommutative, as shown in \cite{GK}). 
\end{abstract}

\maketitle

\section{Introduction}

An interesting invariant of a tensor category $\C$ is the group of tensor structures on the identity functor of $\C$ (i.e., the group of isomorphism classes of tensor autoequivalences of $\C$ which act trivially on the underlying abelian category) up to an isomorphism
\cite{Da,Da4,BC,GK,PSV,Sc}. This group is called {\em the second invariant (or lazy) cohomology group} of $\C$ and denoted by 
$H^2_{\rm inv}(\C)$. In particular, if $\C:=\Corep(H)$ is the category of finite dimensional comodules over a Hopf algebra $H$, then $H^2_{\rm inv}(H):=H^2_{\rm inv}(\C)$ is the group of invariant Hopf $2$-cocycles for $H$ modulo the subgroup of coboundaries of invertible central elements of $H^*$. 

In \cite{Da2,GK} the authors study $H^2_{\rm inv}(\C)$ for $\C:=\Corep_k(\mathcal{O}(G))=\Rep_k(G)$, where $G$ is a finite group, and $\mathcal{O}(G)$ is the algebra of functions on $G$ with values in an algebraically closed field $k$. Namely, they define the set $\mathcal{B}(G)$ of pairs $(A,R)$, where $A$ is a normal commutative subgroup in $G$ and $R$ is a non-degenerate $G$-invariant class in $H^2(\widehat{A},k^{\times})$, and a map $\Theta: H^2_{\rm inv}(G):=H^2_{\rm inv}(\C)\to \mathcal{B}(G)$. This allows one to compute $H^2_{\rm inv}(G)$ in many examples, although not always, as in general the map $\Theta$ is neither surjective nor injective. For example, it was proved in \cite{Da2} that if the order of $G$ is coprime to $6$ then  $H^2_{\rm inv}(G)$ is the direct product of $\mathcal{B}(G)$  and the group $\Out_{\rm cl}(G)$ of class-preserving outer automorphisms of $G$.

The goal of this paper is to generalize the theory of \cite{Da2,GK} to the case of affine algebraic groups, using the methods of \cite{EG1,EG2,G}. In particular, we show that for connected affine algebraic groups over a field $k$ of characteristic zero, the map $\Theta$ is, in fact, bijective (so the situation is simpler than for finite groups). This allows us to compute the second invariant cohomology group in this case. 

More specifically, we show in Theorem \ref{propoftheta} that if $k$ has characteristic $0$ and $U$ is a unipotent  algebraic group with $\mathfrak{u}:=\Lie(U)$, then $H^2_{\rm inv}(U)\cong (\wedge^2\mathfrak{u})^{\mathfrak{u}}$. 

Furthermore, let $G$ be any {\em connected} affine algebraic group over $k$ of characteristic $0$. Let $G_{\rm{u}}$ be the unipotent radical of $G$, and let $G_{\rm{r}}:=G/G_{\rm{u}}$. Let $Z$ be the center of $G$, and let $Z_{\rm u},Z_{\rm r}$ be 
the unipotent and reductive parts of $Z$. Let $\g_{\rm u},\mathfrak{z}_{\rm u},\mathfrak{z}_{\rm r}$ be the Lie algebras of $G_{\rm{u}}$, $Z_{\rm u},Z_{\rm r}$,  respectively. Our main result Theorem \ref{mainth} is that one has 
$$H^2_{\rm inv}(G)\cong \Hom(\wedge^2 \widehat Z_{\text{r}},k^{\times})\times (\mathfrak{z}_{\text{r}}\otimes \mathfrak{z}_{\text{u}})\times (\wedge^2\g_{\rm{u}})^G,$$
where $\widehat{Z_{\rm r}}$ is the character group of $Z_{\rm r}$. In particular, this group is commutative (while for finite groups it can be noncommutative, as shown in \cite{GK}). 

The organization of the paper is as follows. Section 2 is devoted to preliminaries. In particular, we recall the definition of the second invariant cohomology group of a Hopf algebra and its basic properties, and study the space $(\wedge^2\g)^\g$ for a Lie algebra $\g$.

In Section 3 we use \cite{G} to describe the structure of $H^2_{\rm inv}(G)$, where $G$ is a {\em commutative} affine algebraic group over $k$ of characteristic $0$ (see Theorem \ref{h2cfinabg}). 

In Section 4 we use \cite{G} to study the {\em support} group $A$ of an invariant Hopf $2$-cocycle $J$ for the function Hopf algebra $\mathcal{O}(G)$ of an arbitrary affine algebraic group over $k$. In particular, we show that $A$ is a closed normal commutative subgroup of $G$ (see Theorem \ref{inv}).

In Section 5 we extend the definitions of the set $\mathcal{B}(G)$ and map $\Theta: H^2_{\rm inv}(G)\to \mathcal{B}(G)$ from \cite{GK} to affine algebraic groups $G$ over $k$, and study the basic properties of $\Theta$ in Theorem \ref{propoftheta}.

In Section 6 we focus on unipotent algebraic groups over $k$ of characteristic $0$, and prove Theorem \ref{propofthetaunip}. 

Finally, Section 7 is devoted to the proof of our main result Theorem \ref{mainth}, which is a generalization of Theorem \ref{propofthetaunip} to arbitrary {\em connected} affine algebraic groups $G$ over $k$ of characteristic $0$.

{\bf Acknowledgements.} We are grateful to Vladimir Popov for communicating to us Lemma \ref{popov}. We thank A. Davydov and M. Yakimov for suggesting the current formulation of Lemma \ref{invcybe}, and A. Davydov for references. The work of P.E.\ was partially supported by the NSF grant DMS-1502244. S.G. thanks the University of Michigan and MIT for their hospitality. 
\section{Preliminaries}

Throughout the paper, unless otherwise specified, we shall work over an algebraically closed field $k$ of arbitrary characteristic.

\subsection{Hopf $2$-cocycles}\label{hopftwococycle} 
Let $H$ be a Hopf algebra over $k$ with multiplication map $m$. An invertible element $J\in (H\ot H)^*$
is called a (right) {\em Hopf $2$-cocycle} for $H$ if it satisfies the two conditions
\begin{align}\label{2coc}
\sum J (a_1b_1,c)J (a_2,b_2)&=\sum J
(a,b_1c_1)J (b_2,c_2),\\ J (a,1) =\varepsilon(a)&=J (1,a)\nonumber
\end{align}
for all $a,b,c\in H$ (see, e.g., \cite{D}).

We have a natural action of the group of invertible elements $(H^*)^{\times}$ on Hopf $2$-cocycles for $H$. Namely, if $J$ is a Hopf $2$-cocycle for $H$ and $x\in (H^*)^{\times}$, then the linear map 
$J^x:H\ot H\to k$ defined by
$$J^x(a,b)=\sum x(a_1b_1)J(a_2,b_2)x^{-1}(a_3)x^{-1}(b_3),\,\,\,a,b\in H,$$
is also a Hopf $2$-cocycle for $H$. We say that two Hopf $2$-cocycles for $H$ are {\em gauge equivalent} if they belong to the same $(H^*)^{\times}$-orbit, and that they are {\em strongly gauge equivalent} if they belong to the same orbit under the action of the group of invertible {\em central} elements of $H^*$.

Given a Hopf $2$-cocycle $J$ for $H$, one can construct a new Hopf algebra $H^{J}$ as follows. As a coalgebra $H^{J}=H$, and the new multiplication $m^{J}$ is given by $m^{J}=J^{-1}*m*J$, i.e., 
\begin{equation}\label{nm}
m^{J}(a\ot b)=\sum J^{-1} (a_1,b_1)a_2b_2J
(a_3,b_3),\,\,\,a,b\in H.
\end{equation}
Equivalently, every Hopf $2$-cocycle $J$ for $H$ defines a tensor structure on the forgetful functor $\Corep_k(H)\to \Vect$, where $\Corep_k(H)$ is the tensor category of finite dimensional comodules over $H$. Recall that if $J$ and $J'$ are gauge equivalent then the Hopf algebras $H^{J}$ and $H^{J'}$ are isomorphic.

Note that if $K$ is a Hopf subalgebra of $H$ and $J$ is a Hopf $2$-cocycle for $H$ then the restriction $\text{res}(J)$ of $J$ to $K$ defines a Hopf $2$-cocycle for $K$.
Also, if $K$ is a Hopf algebra quotient of $H$ and $J$ is a Hopf $2$-cocycle for $K$ then the lifting $\text{lif}(J)$ of $J$ to $H$ defines a Hopf $2$-cocycle for $H$. 

\subsection{Invariant Hopf $2$-cocycles}\label{invhopftwococycle} Let $H$ be a Hopf algebra over $k$ with multiplication map $m$. A Hopf $2$-cocycle $J$ for $H$ is called {\em invariant} (or {\em lazy} in, e.g., \cite[Section 1.4]{GK}) if $m^{J}=m$ (equivalently, if $J*m=m*J$). In particular, when $H$ is cocommutative, every Hopf $2$-cocycle is invariant.

Note that if $K$ is a Hopf subalgebra of $H$ and $J$ is an invariant Hopf $2$-cocycle for $H$ then $\text{res}(J)$ defines an invariant Hopf $2$-cocycle for $K$.
However, if $K$ is a Hopf algebra quotient of $H$ and $J$ is an invariant Hopf $2$-cocycle for $K$ then $\text{lif}(J)$ is not necessarily an invariant Hopf $2$-cocycle for $H$.

It is clear that invariant Hopf $2$-cocycles for $H$ form a group under multiplication, which is denoted by $Z_{\text{inv}}^2(H)$ (= $Z_{\ell}^2(H)$ in \cite{GK}). In fact, we make the following observation.

\begin{lemma}\label{product}
Let $J$ be an invariant Hopf $2$-cocycle for $H$, and let $F$ be a (not necessarily invariant) Hopf $2$-cocycle for $H$. Then $J*F$ is a Hopf $2$-cocycle for $H$. 
\end{lemma}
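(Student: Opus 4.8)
The plan is to verify the cocycle identity \eqref{2coc} for the convolution product $J*F$ directly, using the fact that $J$ being invariant means it commutes with multiplication in the convolution algebra $(H\otimes H)^*$. Recall that $(J*F)(a,b)=\sum J(a_1,b_1)F(a_2,b_2)$, and that invariance of $J$ says precisely $J*m=m*J$ as maps $H\otimes H\to H$; dually, pairing against any $\varphi\in H^*$, this reads $\sum J(a_1,b_1)\varphi(a_2b_2)=\sum \varphi(a_1b_1)J(a_2,b_2)$ for all $a,b\in H$. More useful will be the "two-variable" reformulation: for any $G\in (H\otimes H)^*$ and any $c\in H$, the invariance of $J$ lets us freely move a factor $J$ past a product of the first two tensorands. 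Concretely, I would record the identity
\[
\sum J(a_1b_1,c_1)\,G(a_2,b_2,c_2)=\sum J(a_2b_2,c_2)\,G(a_1,b_1,c_1)
\]
and its analogue on the other side, obtained by applying the definition of invariance to $\Delta^{(2)}$ applied to the relevant inputs.

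The main computation then proceeds as follows. Start with the left side of the cocycle identity for $J*F$:
\[
\sum (J*F)(a_1b_1,c)\,(J*F)(a_2,b_2),
\]
expand each factor using the definition of convolution, so this becomes a sum over more Sweedler components of $J(\cdot,\cdot)F(\cdot,\cdot)J(\cdot,\cdot)F(\cdot,\cdot)$ with appropriately comultiplied arguments. The key step is to use invariance of $J$ to rearrange the two $J$-factors so that they sit adjacent and their arguments are grouped as in the cocycle identity for $J$ alone; simultaneously the two $F$-factors get grouped as in the cocycle identity for $F$ alone. Then apply \eqref{2coc} for $J$ and \eqref{2coc} for $F$ separately to each grouped pair, and finally use invariance of $J$ once more (in the reverse direction) to reassemble the result into $\sum (J*F)(a,b_1c_1)(J*F)(b_2,c_2)$, which is the right side. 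The normalization condition $(J*F)(a,1)=\varepsilon(a)=(J*F)(1,a)$ is immediate from $J(a,1)=F(a,1)=\varepsilon(a)$ and the counit axioms. Invertibility of $J*F$ in $(H\otimes H)^*$ follows since both $J$ and $F$ are invertible there.

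The step I expect to be the main obstacle is the bookkeeping of Sweedler indices when commuting the $J$-factors into position: one must be careful that invariance of $J$ is applied to exactly the right grouping of coproducts, and that no implicit coassociativity or counit manipulation is being smuggled in incorrectly. A cleaner way to organize this, which I would adopt to avoid index proliferation, is to work entirely in the convolution algebra $(H^{\otimes 3})^*$: write both sides of the desired identity as convolution products of the "partial" cocycles $J_{12,3}, J_{1,2}, F_{12,3}, F_{1,2}$ (and their $J_{1,23}, F_{2,3}$ counterparts), observe that invariance of $J$ is the statement that $J_{12,3}$ commutes with anything of the form $(m\otimes\id)^*(\text{-})$ and similarly $J_{1,2}$ with $(\id\otimes m)^*(\text{-})$ — more precisely that $J$ commutes past the multiplication maps — and then the identity becomes a purely formal consequence of the two cocycle conditions together with these commutations. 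This reduces the proof to a short chain of equalities in an associative algebra, with the combinatorial content isolated in the already-known cocycle identities for $J$ and $F$.
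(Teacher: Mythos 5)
Your overall strategy coincides with the paper's: expand $(J*F)$ in the two sides of (\ref{2coc}), use invariance of $J$ to bring the two $J$-factors (and hence the two $F$-factors) next to each other, apply (\ref{2coc}) for $J$ and for $F$ separately, and swap back; the normalization and invertibility remarks are fine. Moreover, the first dual form of invariance you wrote, $\sum J(a_1,b_1)\varphi(a_2b_2)=\sum \varphi(a_1b_1)J(a_2,b_2)$ for $\varphi\in H^*$, is exactly the right tool: taking $\varphi=F(-,c)$, respectively $\varphi=F(a,-)$, it yields
\[
\sum J(a_1,b_1)F(a_2b_2,c)=\sum F(a_1b_1,c)J(a_2,b_2),\qquad
\sum J(b_1,c_1)F(a,b_2c_2)=\sum F(a,b_1c_1)J(b_2,c_2),
\]
and these two commutations are precisely the swaps carried out in the paper's five-line computation.

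However, the identity you single out as ``more useful,'' namely $\sum J(a_1b_1,c_1)G(a_2,b_2,c_2)=\sum J(a_2b_2,c_2)G(a_1,b_1,c_1)$ for arbitrary $G\in (H^{\ot 3})^*$, is false in general, and so is the convolution-algebra reformulation at the end: it is not $J_{12,3}$ that commutes with everything of the form $(m\ot\id)^*(-)$. Since $m$ is a coalgebra map, $(m\ot\id)^*:(H\ot H)^*\to (H^{\ot 3})^*$ is an algebra map, so $J_{12,3}=(m\ot\id)^*J$ commuting with all $(m\ot\id)^*G$ would force $J$ to be central in $(H\ot H)^*$; invariance only says $J*m=m*J$ in $\Hom(H\ot H,H)$, which is much weaker. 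What invariance actually gives (and what is needed) is that $J\ot\varepsilon$ commutes with $(m\ot\id)^*F$ and that $\varepsilon\ot J$ commutes with $(\id\ot m)^*F$ --- i.e., the two displays above; your statement also misassigns these roles ($J_{1,2}$ goes with $(m\ot\id)^*(-)$, and $J_{2,3}$, not $J_{1,2}$, with $(\id\ot m)^*(-)$). With these corrections your computation becomes exactly the paper's proof, but as written the key rearrangement step is being justified by an identity that does not hold.
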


\begin{proof}
By Eq. (\ref{2coc}) for $J$ and $F$, and the invariance of $J$, we have
\begin{eqnarray*}
\lefteqn{\sum (J*F)(a_1b_1,c)(J*F)(a_2,b_2)}\\
& = & \sum J(a_1b_1,c_1)F(a_2b_2,c_2)J(a_3,b_3)F(a_4,b_4)\\
& = & \sum \left(J(a_1b_1,c_1)J(a_2,b_2)\right ) \left(F(a_3b_3,c_2)F(a_4,b_4)\right )\\
& = & \sum \left(J(a_1,b_1c_1)J(b_2,c_2)\right )\left(F(a_2,b_3c_3)F(b_4,c_4)\right )\\
& = & \sum J(a_1,b_1c_1)F(a_2,b_2c_2)J(b_3,c_3)F(b_4,c_4)\\
& = & \sum (J*F)(a,b_1c_1)(J*F)(b_2,c_2)
\end{eqnarray*}
for all $a,b,c\in H$, as desired. 
\end{proof}

Furthermore, the group $Z_{\text{inv}}^2(H)$ contains a central subgroup $B_{\text{inv}}^2(H)$ (= $B_{\ell}^2(H)$ in \cite{GK}) consisting of all the invariant Hopf $2$-cocycles $\partial(x)$, where $x\in (H^*)^{\times}$ is {\em central} in $H^*$ and
$$\partial(x)(a,b)=\sum x(a_1b_1)x^{-1}(a_2)x^{-1}(b_2),\,\,\,a,b\in H.$$
Following \cite{GK}, we define the quotient group 
\begin{equation}\label{cohom}
H_{\text{inv}}^2(H):=Z_{\text{inv}}^2(H)/B_{\text{inv}}^2(H)
\end{equation}
(= $H_{\ell}^2(H)$ in \cite{GK}). For $J\in Z_{\text{inv}}^2(H)$, let $[J]$ denote its class in $H_{\text{inv}}^2(H)$. By definition, $[J]=[\bar{J}]$ if and only if $J$ and $\bar{J}$ are {\em strongly} gauge equivalent (see Subsection 2.1). 

It is straightforward to verify that a Hopf $2$-cocycle $J$ for $H$ is invariant if and only if it defines a tensor structure on the identity functor on $\C:=\Corep_k(H)$. Hence, $H_{\text{inv}}^2(H)$ is isomorphic to the group $\Aut_0(\C)$ of isomorphism classes of tensor structures on the identity functor of $\C$, i.e., of tensor autoequivalences of $\C$ which act trivially on the underlying abelian category. The second invariant cohomology $\Aut_0(\C)$ for an arbitrary tensor category $\C$ was introduced first by Davydov \cite{Da} and studied also in \cite{PSV}.

\begin{proposition}\cite[Corollary 3.16]{DEN}\label{trivial1} 
$H_{\text{inv}}^2(H)$ is an affine proalgebraic group, and if $H$ is finitely presented, it is an affine algebraic group. \qed
\end{proposition}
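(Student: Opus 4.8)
The plan is to produce, for a given Hopf algebra $H$, an affine proalgebraic group whose group of $k$-points is $H^2_{\text{inv}}(H)$, by realizing $Z^2_{\text{inv}}(H)$ and $B^2_{\text{inv}}(H)$ as proalgebraic groups and taking the quotient. First I would reduce to the finitely presented case: write $H$ as a filtered union $H=\varinjlim H_\alpha$ of its finitely generated (hence finitely presented, since $H$ is a Hopf algebra, so finitely generated as an algebra implies Noetherian in the relevant situations — or more carefully, use the standard fact that every Hopf algebra is the union of its finitely generated Hopf subalgebras) Hopf subalgebras $H_\alpha$. Restriction of cocycles is compatible with these inclusions, and since an invariant Hopf $2$-cocycle on $H$ is determined by its restrictions to all the $H_\alpha$, one gets $Z^2_{\text{inv}}(H)=\varprojlim_\alpha Z^2_{\text{inv}}(H_\alpha)$ and similarly for $H^2_{\text{inv}}$. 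Thus it suffices to treat $H$ finitely presented and show $H^2_{\text{inv}}(H)$ is then an affine algebraic group; the proalgebraic case follows by taking the inverse limit of affine algebraic groups.

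For $H$ finitely presented, I would exhibit $Z^2_{\text{inv}}(H)$ as the $k$-points of an affine algebraic group $\underline{Z}$. A Hopf $2$-cocycle is an element $J$ of the algebraic group $(H\otimes H)^{*,\times}$ (the unit group of the finite-dimensional-in-each-graded-piece... — more precisely, one must be a little careful since $(H\otimes H)^*$ is not finite dimensional, but the \emph{invariance} condition $J*m=m*J$ together with $J(a,1)=\varepsilon(a)=J(1,a)$ forces $J$ to be supported on a fixed finite-dimensional quotient, as in \cite{G}; this is the technical heart). Granting that the relevant $J$ live in the dual of a finite-dimensional coalgebra quotient, the cocycle identity \eqref{2coc} and the invariance identity are polynomial (bi)linear conditions in the matrix entries of $J$ and $J^{-1}$, cutting out a Zariski-closed subgroup $\underline{Z}$ of a general linear group; its $k$-points are exactly $Z^2_{\text{inv}}(H)$. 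Likewise, the central unit group $Z((H^*)^\times)$ is an affine algebraic group (again using that the relevant central elements $x$ with $\partial(x)$ invariant are supported on a fixed finite-dimensional piece), the coboundary map $\partial$ is a morphism of algebraic groups, and $B^2_{\text{inv}}(H)=\Image\partial$ is therefore a closed normal subgroup of $\underline{Z}$.

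Finally I would invoke the quotient theory for affine algebraic groups: the quotient $\underline{Z}/\overline{\Image\,\partial}$ — and in fact $\Image\,\partial$ is already closed, being the image of a morphism of algebraic groups — is again an affine algebraic group, and its $k$-points are $Z^2_{\text{inv}}(H)/B^2_{\text{inv}}(H)=H^2_{\text{inv}}(H)$ (here one uses that $k$ is algebraically closed, so taking $k$-points is exact on the relevant short exact sequences). Reassembling over the filtered system $\{H_\alpha\}$ gives the proalgebraic structure in general. The main obstacle, and the step I expect to require genuine work rather than formal nonsense, is the finiteness/support statement: showing that invariance pins an invariant Hopf $2$-cocycle (and the central elements whose coboundaries are invariant) down to a fixed finite-dimensional coalgebra quotient of $H$, so that the defining conditions become finitely many polynomial equations; this is where the cited work \cite{DEN} (and the methods of \cite{G}) does the real lifting, and I would follow that analysis rather than reprove it from scratch.
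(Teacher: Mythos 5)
There is nothing in the paper to compare against here: Proposition \ref{trivial1} is quoted verbatim from \cite[Corollary 3.16]{DEN} with no internal argument, so your proposal has to stand on its own. It does not, because the step you yourself single out as the technical heart is false. Invariance together with the normalization $J(a,1)=\varepsilon(a)=J(1,a)$ does \emph{not} force an invariant Hopf $2$-cocycle (nor the central units whose coboundaries are invariant) to factor through a fixed finite-dimensional quotient coalgebra, even for finitely presented $H$. Take $H=\mathcal{O}(\mathbb{G}_a^2)=k[x,y]$ in characteristic $0$ and $J=\exp(r/2)$ with $r=u\wedge v\in\wedge^2\mathfrak{a}$, where $u,v\in\mathfrak{a}=\Lie(\mathbb{G}_a^2)$ are the primitive functionals dual to $x,y$; by Theorem \ref{h2cfinabg}(2) this is an invariant Hopf $2$-cocycle, and a direct computation gives $J(x^n\otimes y^n)=n!/2^n\neq 0$ for all $n$, so $J$ kills no power of the augmentation ideal of $H\otimes H$ and indeed lies outside the finite dual of $H\otimes H$ (in characteristic $0$ the finite dual of a polynomial algebra consists of finite-order distributions supported at finitely many points). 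Similarly, $H^*$ is commutative, so $B^2_{\text{inv}}(H)$ consists of the coboundaries of \emph{all} units of $H^*$ and is infinite-dimensional. Thus even for finitely presented $H$ the groups $Z^2_{\text{inv}}(H)$ and $B^2_{\text{inv}}(H)$ are only proalgebraic (closed subgroup schemes of the proalgebraic unit groups of $(H\otimes H)^*$ and $H^*$), and it is only the quotient $H^2_{\text{inv}}(H)$ that becomes of finite type (for $\mathbb{G}_a^2$ it is $\mathbb{G}_a$). So your plan of cutting $Z^2_{\text{inv}}(H)$ out of some $\GL_N$ and then quotienting cannot be repaired; the finiteness has to be established at the level of the quotient, which is exactly the nontrivial content of \cite[Corollary 3.16]{DEN}. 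Note also that the ``support'' of a cocycle in \cite{G} (Section \ref{suppinvariant} here) is a closed subgroup, possibly all of $G$, not a finite-dimensional quotient, so that citation does not supply the finiteness you want.

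Two further soft spots in your reduction step. First, a finitely generated Hopf subalgebra of an arbitrary (noncommutative) Hopf algebra need not be finitely presented, so the parenthetical Noetherian argument only works in the commutative case; and writing an arbitrary Hopf algebra as a filtered union of finitely generated Hopf subalgebras already needs care with the antipode. Second, even granting $Z^2_{\text{inv}}(H)=\varprojlim_\alpha Z^2_{\text{inv}}(H_\alpha)$ (restriction does preserve invariance and centrality, so the maps exist), the identification $H^2_{\text{inv}}(H)\cong\varprojlim_\alpha H^2_{\text{inv}}(H_\alpha)$ is unjustified: quotients by coboundaries need not commute with the inverse limit, since a compatible family of cohomology classes need not lift to a compatible family of cocycles, and a priori a cocycle on $H$ could become a coboundary on each $H_\alpha$ without being one on $H$. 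The workable route is to exhibit $Z^2_{\text{inv}}$ and $B^2_{\text{inv}}$ directly as closed subgroup schemes of the proalgebraic unit groups above, form the quotient in the category of affine proalgebraic groups, and only then use finite presentation of $H$ to bound the \emph{quotient}, rather than the cocycles themselves.
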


\begin{example}\label{sweedler} 
As pointed out in \cite[Section 8]{Da} (see also \cite{Sc}), when $H$ is {\em cocommutative}, the group $H_{\text{inv}}^2(H)$ coincides with Sweedler's second cohomology group $H_{\text{Sw}}^2(H,k)$ of $H$ with coefficients in the algebra $k$ \cite{S}. In particular, 
if $G$ is a group then $H_{\text{inv}}^2(k[G])\cong H^2(G,k^{\times})$ \cite[Theorem 3.1]{S}, and if $\g$ is a Lie algebra then $H_{\text{inv}}^2(U(\g))\cong H^2(\g,k)$ \cite[Theorem 4.3]{S}. 
\end{example}

Recall that a Hopf algebra automorphism of $H$ of the form
$$\ad(x):=x*\id *x^{-1}$$ for some $x\in (H^*)^{\times}$ is called {\em cointernal} (see, e.g., \cite{BC}). For example, if  $x:H\to k$ is an algebra homomorphism (i.e., $x\in H^*_{\rm fin}$ is a grouplike element) then $\ad(x)$ is cointernal and it is called {\em coinner}. The set of all cointernal Hopf algebra automorphisms of $H$ is denoted by $\CoInt(H)$, and the set of all coinner Hopf algebra automorphisms of $H$ is denoted by $\CoInn(H)$. It is easy to see that $\CoInn(H)\subseteq\CoInt(H)$ are normal subgroups of $\Aut_{\text{Hopf}}(H)$ (see, e.g., \cite[Lemma 1.12]{BC}).

The following lemma is dual to \cite[Proposition 1.7(a)]{GK}.

\begin{lemma}\label{trivialgauge} 
Let $J\in Z_{\text{inv}}^2(H)$ and let $x\in (H^*)^{\times}$. Then $J^x\in Z_{\text{inv}}^2(H)$ if and only if $\ad(x)\in \CoInt(H)$.
\end{lemma}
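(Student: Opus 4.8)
The plan is to verify the equivalence by directly computing $J^x$ in terms of $\partial(x)$ and the twisted multiplication $m^x := \ad(x)(m)$, reducing the cocycle conditions to the invariance of $J$. First I would record the identity expressing $J^x$ as a convolution product. Unwinding the definition $J^x(a,b) = \sum x(a_1b_1) J(a_2,b_2) x^{-1}(a_3) x^{-1}(b_3)$, one sees that $J^x = \partial(x) * J'$ where $J'$ is the Hopf $2$-cocycle for $H$ obtained from $J$ by transporting along the coalgebra automorphism $\ad(x)$; since $\partial(x) \in B^2_{\text{inv}}(H) \subseteq Z^2_{\text{inv}}(H)$ is invariant, Lemma \ref{product} shows $J^x$ is automatically a Hopf $2$-cocycle, and the content of the lemma is precisely the \emph{invariance} of $J^x$.

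Next I would compute $m^{J^x}$. Using $m^{J^x} = (J^x)^{-1} * m * J^x$ and $J^x = \partial(x) * (\text{twist of } J)$, together with $m^{\partial(x)} = \ad(x)(m) = x * m * x^{-1}$ (a standard computation, since $\partial(x)$ is the coboundary of $x$), one obtains that $m^{J^x}$ equals the twisted multiplication $\ad(x)^{-1}\bigl(m^{J}\bigr)$ conjugated appropriately — more precisely, after the dust settles, $m^{J^x} = m^J$ twisted by $\ad(x)$. Since $J$ is invariant, $m^J = m$, so $m^{J^x} = \ad(x)(m)$ (or its inverse, depending on conventions). Therefore $m^{J^x} = m$ if and only if $\ad(x)(m) = m$, i.e., if and only if $\ad(x)$ is an algebra automorphism of $H$; combined with the fact that $\ad(x)$ is always a coalgebra automorphism and $x$ is invertible, this says exactly $\ad(x) \in \CoInt(H)$. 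This gives both directions at once.

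The main obstacle is bookkeeping: tracking the Sweedler indices through the nested convolution $(J^x)^{-1} * m * J^x$ and correctly identifying how $\ad(x)$ acts by conjugation is delicate, and one must be careful that $J^{-1}$ in $(H \otimes H)^*$ is the convolution inverse (not the pointwise inverse) and that $\partial(x)^{-1} = \partial(x^{-1})$ only up to the twist. An alternative, cleaner route — which I would likely prefer — is categorical: an element $J \in Z^2_{\text{inv}}(H)$ is a tensor structure on $\Id_{\Corep_k(H)}$, and gauging by $x \in (H^*)^\times$ corresponds to composing with the (generally non-tensor) natural automorphism of the forgetful functor induced by $x$; the gauged cocycle $J^x$ is again tensor on $\Id$ precisely when this automorphism intertwines the trivial tensor structure on the forgetful functor with itself up to the discrepancy measured by $\ad(x)$, which happens iff $\ad(x)$ is a tensor (i.e., Hopf algebra) automorphism, i.e., $\ad(x) \in \CoInt(H)$. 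Either way the statement follows; since this is dual to \cite[Proposition 1.7(a)]{GK}, I would also remark that one may simply dualize that proof.
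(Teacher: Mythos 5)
Your argument is correct, and it reaches the same reduction as the paper --- namely that, once the invariance of $J$ is used, the invariance of $J^x$ is equivalent to $\ad(x)$ being multiplicative --- but by a different packaging. The paper manipulates the convolution identity $J^x*m=m*J^x$ directly: writing $J^x=(x\circ m)*J*(x^{-1}\otimes x^{-1})$, cancelling $J$ using $J*m=m*J$, and rearranging to $(x^{-1}\otimes x^{-1})*m*(x\otimes x)=(x^{-1}\circ m)*m*(x\circ m)$, which is literally the statement that $\ad(x)$ is a Hopf automorphism. You instead compute $m^{J^x}$, in effect invoking the standard fact recalled in Subsection 2.3 that $\ad(x)$ is an isomorphism $H^J\to H^{J^x}$, so that $m^{J^x}=\ad(x)\circ m^J\circ(\ad(x)^{-1}\otimes\ad(x)^{-1})$ up to the $x\leftrightarrow x^{-1}$ convention (harmless, since $\ad(x)\in\CoInt(H)$ iff $\ad(x)^{-1}=\ad(x^{-1})\in\CoInt(H)$); with $m^J=m$ this gives $m^{J^x}=m$ iff $\ad(x)$ is an algebra automorphism, hence a Hopf automorphism since it is always a coalgebra automorphism. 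What your route buys is that the Sweedler bookkeeping is outsourced to that known gauge-equivalence isomorphism; what the paper's route buys is self-containedness --- your ``after the dust settles'' step is exactly the computation the paper writes out, so you have not really avoided it, only relocated it. Your categorical variant is fine as well and is essentially the dual of \cite[Proposition 1.7(a)]{GK}, as you note.

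One side remark in your first paragraph is incorrect, though harmless for the proof: $J':=(x\otimes x)*J*(x^{-1}\otimes x^{-1})=J\circ(\ad(x)\otimes\ad(x))$ need not be a Hopf $2$-cocycle when $\ad(x)$ is not an algebra map, since the cocycle condition involves products $a_1b_1$, which do not transport along a mere coalgebra automorphism; so Lemma \ref{product} cannot be applied to the factorization $J^x=\partial(x)*J'$. This does not matter, because $J^x$ is a Hopf $2$-cocycle for any $x\in(H^*)^{\times}$ by the gauge action recalled in Subsection 2.1, and, as you correctly say, the whole content of the lemma is the invariance of $J^x$.
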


\begin{proof}
By definition, $J^x\in Z_{\text{inv}}^2(H)$ if and only if $J^x*m=m*J^x$, i.e., if and only if $$(x\circ m)*J*(x^{-1}\otimes x^{-1})*m=m*(x\circ m)*J*(x^{-1}\otimes x^{-1}).$$
Hence, since $J$ is invariant, it follows that $J^x\in Z_{\text{inv}}^2(H)$ if and only if
$$(x\circ m)*(x^{-1}\otimes x^{-1})*m=m*(x\circ m)*(x^{-1}\otimes x^{-1}),$$
which is equivalent to
$$
(x^{-1}\otimes x^{-1})*m*(x\otimes x)=(x^{-1}\circ m)*m*(x\circ m).$$
But by definition, the latter is equivalent to $\ad(x)$ being a Hopf automorphism of $H$. We are done. 
\end{proof}

Following \cite[Proposition 1.7]{GK}, we have the following result.

\begin{proposition}\label{trivial}
The quotient group $\CoInt(H)/\CoInn(H)$ acts freely on $H_{\text{inv}}^2(H)$, and the associated map 
\begin{equation}\label{embedding}
\CoInt(H)/\CoInn(H)\to H_{\text{inv}}^2(H),\,\,\,\ad(x)\mapsto (\varepsilon\otimes \varepsilon)^x,
\end{equation} 
defines an embedding of $\CoInt(H)/\CoInn(H)$ as a subgroup of $H_{\text{inv}}^2(H)$.
\end{proposition}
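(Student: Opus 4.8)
The plan is to verify the two assertions of Proposition~\ref{trivial} — that the action is free and that the stated map is a group embedding — by combining Lemma~\ref{trivialgauge} with the bookkeeping already set up for the $(H^*)^\times$-action on Hopf $2$-cocycles. First I would recall that $(H^*)^\times$ acts on $Z^2(H)$ via $J\mapsto J^x$, and that this action descends, on the invariant part, to a partially-defined action: by Lemma~\ref{trivialgauge}, $J^x$ is again invariant precisely when $\ad(x)\in\CoInt(H)$. So for $\ad(x)\in\CoInt(H)$ the assignment $[J]\mapsto[J^x]$ is well defined on $H^2_{\text{inv}}(H)$ once one checks it respects $B^2_{\text{inv}}(H)$; this last point is the routine verification that $\partial(y)^x=\partial(y')$ for a suitable central $y'$ when $y$ is central, together with $(J^x)^{x'}=J^{xx'}$ and the fact that $\ad(x)$ depends only on $x$ up to a grouplike (i.e. algebra-homomorphism) factor, i.e. up to $\CoInn(H)$ — this is exactly what makes the action factor through $\CoInt(H)/\CoInn(H)$.

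Next I would establish freeness. Suppose $\ad(x)\in\CoInt(H)$ fixes some class $[J]\in H^2_{\text{inv}}(H)$, i.e. $J^x$ is strongly gauge equivalent to $J$; I want to conclude $\ad(x)\in\CoInn(H)$. Write $J^x=\partial(z)*J$ for some central invertible $z\in H^*$. Unwinding the definition of $J^x$ and of $\partial$, this says $(x\circ m)*J*(x^{-1}\ot x^{-1})=\partial(z)*J$, and since $z$ is central one can move $\partial(z)$ past $J$ (this uses invariance of $J$ in the same way as in the proof of Lemma~\ref{trivialgauge}), so that $(x\circ m)*(x^{-1}\ot x^{-1})=\partial(z)*(\text{the analogous expression for }1)$; cancelling gives $\partial(x)=\partial(z)$ as elements of $(H\ot H)^*$, hence $\partial(xz^{-1})=\eps\ot\eps$. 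An element $y$ with $\partial(y)=\eps\ot\eps$ satisfies $y(ab)=y(a)y(b)$, i.e. $y$ is an algebra homomorphism $H\to k$ (a grouplike in $H^*_{\rm fin}$), so $\ad(y)\in\CoInn(H)$; since $z$ is central $\ad(z)=\id$, whence $\ad(x)=\ad(xz^{-1})\in\CoInn(H)$. That is freeness.

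Finally, taking $J=\eps\ot\eps$ (the trivial invariant cocycle, whose class is the identity of $H^2_{\text{inv}}(H)$) in the orbit map gives the map $\ad(x)\mapsto[(\eps\ot\eps)^x]$ of Eq.~\eqref{embedding}; that it is a group homomorphism follows from $(\eps\ot\eps)^{xx'}=((\eps\ot\eps)^{x'})^x$ and the cocycle identity $[(\eps\ot\eps)^{x}*F]=[(\eps\ot\eps)^x]\cdot[F]$ inside $H^2_{\text{inv}}(H)$ (Lemma~\ref{product} guarantees these products are again cocycles), and injectivity is precisely the special case of freeness at the identity class. I would also note that $(\eps\ot\eps)^x$ is automatically invariant when $\ad(x)\in\CoInt(H)$, directly from Lemma~\ref{trivialgauge} applied to $J=\eps\ot\eps$, so the map indeed lands in $H^2_{\text{inv}}(H)$.

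The main obstacle I expect is not any single computation but rather keeping the convolution algebra manipulations honest — in particular verifying that $\partial(z)$ for central $z$ commutes with $J$ under convolution (which is where invariance of $J$ re-enters), and cleanly extracting the statement ``$\partial(y)=\eps\ot\eps\iff y$ is an algebra map'' from the definition of $\partial$. Everything else is the dual transcription of \cite[Proposition 1.7]{GK}, so once the dictionary between the left-module language there and the comodule/convolution language here is fixed, the argument should go through without surprises.
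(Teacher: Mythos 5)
Your proposal is correct and follows essentially the same route as the paper: freeness is obtained by unwinding strong gauge equivalence and using invariance of $J$ (to commute $J$ with elements of the form $f\circ m$) together with centrality of $z$ to conclude that $x$ times a central element is an algebra homomorphism, hence $\ad(x)\in\CoInn(H)$, and the embedding is the orbit map at the trivial class, with injectivity coming from freeness. The only cosmetic differences are that you write strong equivalence as $J^x=\partial(z)*J$ rather than $J=(J^x)^z$ (which additionally uses the easy fact that $z\ot z$ is central in $(H\ot H)^*$ when $z$ is central in $H^*$), and that the descent of the action to $\CoInt(H)/\CoInn(H)$ deserves the explicit one-line check, as in the paper, that $J^x=J$ for grouplike $x$ by invariance of $J$.
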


\begin{proof}
By Lemma \ref{trivialgauge}, we have an action of the group $\CoInt(H)$ on $H_{\text{inv}}^2(H)$, given by $\ad(x)\cdot [J]=[J^x]$ for every $\ad(x)\in \CoInt(H)$ and $[J]\in H_{\text{inv}}^2(H)$. This is a well-defined action since if $\ad(x)=\ad(y)$ then $x*y^{-1}$ is central in $H^*$, and hence $[J^x]=[J^y]$.

Now every invariant Hopf $2$-cocycle $J$ for $H$ is fixed by every coinner Hopf automorphism $\ad(x)$ with $x\in G(H^*)$, since $$J^x=(x\circ m)*J*(x^{-1}\otimes x^{-1})=J*(x\circ m)*(x^{-1}\otimes x^{-1})=J.$$ Hence, we get an action of the quotient group $\CoInt(H)/\CoInn(H)$ on $H_{\text{inv}}^2(H)$.

Fix $[J]\in H_{\text{inv}}^2(H)$, and suppose $\ad(x)\in\CoInt(H)$ is such that $\ad(x)\cdot [J]=[J]$. Then $[J]=[J^x]$, hence $J$ and $J^x$ are {\em strongly} gauge equivalent, i.e., $$J=(z\circ m)*(x\circ m)*J*(x^{-1}\otimes x^{-1})*(z^{-1}\otimes z^{-1})$$ for some {\em central} invertible element $z$ in $H^*$. Since $J$ is invariant, $$(z\circ m)*(x\circ m)*J=J*(z\circ m)*(x\circ m),$$ hence we have $\varepsilon\otimes \varepsilon=((z*x)\circ m)*((z*x)^{-1}\otimes (z*x)^{-1})$. In other words, we have $z*x\in G(H^*)$. But $z$ is central, hence $\ad(x)=\ad(z*x)$ is in $\Inn(H)$, which implies the freeness of the action.

Finally, it follows from the above that the map (\ref{embedding}) is injective, and since $(\varepsilon\otimes \varepsilon)^x$ is invariant for every $x\in (H^*)^{\times}$ such that $\ad(x)\in\CoInt(H)$, it is straightforward to verify that $(\varepsilon\otimes \varepsilon)^{xy}=(\varepsilon\otimes \varepsilon)^x*(\varepsilon\otimes \varepsilon)^y$ for every $x,y\in (H^*)^{\times}$ such that $\ad(x),\ad(y)\in\CoInt(H)$, hence (\ref{embedding}) is a group homomorphism.
\end{proof}

\begin{remark}
Let $\C:=\Corep_k(H)$. It is straightforward to verify that $\CoInt(H)/\CoInn(H)$ is the stabilizer of the standard fiber functor of $\C$ in $\Aut_0(\C)$.
\end{remark}

\subsection{Cotriangular Hopf algebras}\label{cothopalg} Recall (see, e.g., \cite[Definition 8.3.19]{EGNO}) that $(H,R)$ is a {\em cotriangular} Hopf algebra if $R\in (H\ot H)^*$ is an invertible element such that 
\begin{itemize}
\item
$\sum R(h_1,g_1)R(g_2,h_2)=\varepsilon(g)\varepsilon(h)$ (i.e., $R^{-1}=R_{21}$),
\item 
$R(h,gl)=\sum R(h_1,g)R(h_2,l)$,
\item
$R(hg,l)=
\sum R(g,l_1)R(h,l_2)$, and
\item
$\sum R(h_1,g_1)g_2h_2=\sum h_1g_1R(h_2,g_2)$
\end{itemize}
for every $h,g,l\in H$. 

Recall that the {\em Drinfeld element} of $(H,R)$ is the grouplike element $u$ in $H^*$, given by $u(h)=\sum R(S(h_2),h_1)$ for every $h\in H$.

Given a Hopf $2$-cocycle $J$ for $H$, $(H^{J},R^J)$ is also cotriangular, where $R^{J}:=J_{21}^{-1}*R*J$. Recall that if $J$ and $\bar{J}:=J^x$ are gauge equivalent then $\ad(x):(H^{J},R^J)\to (H^{\bar{J}},R^{\bar{J}})$ is an isomorphism of cotriangular Hopf algebras.

\begin{proposition}\label{minimal}
Let $(H,R)$ be a cotriangular Hopf algebra, and let $I:=\{a\in H\mid R(b,a)=0,\,b\in H\}$ be the right radical of $R$. Then $I$ coincides with the left radical $\{b\in H\mid R(b,a)=0,\,a\in H\}$ of $R$, and is a Hopf ideal of $H$.
\end{proposition}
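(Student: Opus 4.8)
The plan is to exploit the two convolution-algebra maps that $R$ determines, together with the interaction of $R$ with the antipode. Write $I_r:=I$ for the right radical of $R$ and $I_\ell:=\{b\in H\mid R(b,a)=0\ \forall a\in H\}$ for the left radical. I will use two standard facts: first, $R(1,h)=\varepsilon(h)=R(h,1)$, which follows from the axioms (setting one argument to $1$ in $R^{-1}=R_{21}$ shows that $R(1,-)$ and $R(-,1)$ are mutually convolution-inverse in $(H^*,*)$, and $g=l=1$ in $R(h,gl)=\sum R(h_1,g)R(h_2,l)$ forces $R(-,1)=R(-,1)*R(-,1)$, hence $R(-,1)=\varepsilon$ and likewise $R(1,-)=\varepsilon$); second, the bijectivity of $S$, which is standard for cotriangular Hopf algebras (e.g.\ because $S^2$ is conjugation in $H^*$ by the invertible Drinfeld element $u$, and it is automatic when $H$ is commutative, as in our applications).

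\emph{Step 1: the identity $R(h,S(g))=R(g,h)$, and $I_\ell=I_r$.} Substituting $g_1$ for $g$ and $S(g_2)$ for $l$ in the axiom $R(h,gl)=\sum R(h_1,g)R(h_2,l)$ and using $\sum g_1 S(g_2)=\varepsilon(g)1$ shows that $R*(R\circ(\id\ot S))=\varepsilon\ot\varepsilon$ in $(H\ot H)^*$; since also $R*R_{21}=\varepsilon\ot\varepsilon$ and $R$ is convolution-invertible, uniqueness of inverses gives $R\circ(\id\ot S)=R_{21}$, i.e.\ $R(h,S(g))=R(g,h)$ for all $h,g$. Taking $h=a$ and $g=S^{-1}(b)$ yields $R(a,b)=R(S^{-1}(b),a)$ for all $a,b\in H$; as $S^{-1}$ permutes $H$, this immediately gives $I_\ell=I_r$. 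Denote this common subspace by $I$.

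\emph{Step 2: $I$ is a Hopf ideal.} The map $\lambda\colon H\to H^*$, $\lambda(a):=R(-,a)$, is an algebra homomorphism into $(H^*,*)$ — the equality $\lambda(ab)=\lambda(a)*\lambda(b)$ is precisely $R(c,ab)=\sum R(c_1,a)R(c_2,b)$, and $\lambda(1)=R(-,1)=\varepsilon$ — so $I=\Ker\lambda$ is a two-sided ideal. For the coideal property, view $(\lambda\ot\lambda)(\Delta(a))=\sum\lambda(a_1)\ot\lambda(a_2)\in H^*\ot H^*\subseteq(H\ot H)^*$; it is the functional $b\ot c\mapsto\sum R(b,a_1)R(c,a_2)=R(cb,a)$, by the axiom $R(hg,l)=\sum R(g,l_1)R(h,l_2)$. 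For $a\in I=I_r$ this vanishes, hence $(\lambda\ot\lambda)(\Delta(a))=0$; since $\Ker(\lambda\ot\lambda)=I\ot H+H\ot I$ and $\varepsilon(a)=R(1,a)=0$, we get $\Delta(I)\subseteq I\ot H+H\ot I$ and $\varepsilon(I)=0$. Finally, $S(I)\subseteq I$: if $a\in I=I_\ell$ then $R(h,S(a))=R(a,h)=0$ for all $h$ by Step 1, so $S(a)\in I_r=I$. Thus $I$ is a Hopf ideal (and $R$ then descends to a cotriangular structure on $H/I$).

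The main obstacle is Step 1: relating the left and right radicals genuinely uses both the structural identity $R(h,S(g))=R(g,h)$ and the bijectivity of $S$ — without surjectivity of $S$ one only learns that $R(a,-)$ annihilates the sub-bialgebra $S(H)$, which is not enough. The remaining verifications are routine bookkeeping with the cotriangularity axioms inside $(H\ot H)^*$, together with the elementary facts $V^*\ot W^*\hookrightarrow(V\ot W)^*$ and $\Ker(\lambda\ot\lambda)=\Ker\lambda\ot H+H\ot\Ker\lambda$.
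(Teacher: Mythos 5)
Your proof is correct. Note that the paper itself gives no argument here — it simply cites \cite[Proposition 2.1]{G} — so your write-up is a self-contained substitute, and it follows the standard route implicit in that reference: the key identity $R\circ(\id\ot S)=R^{-1}=R_{21}$ (obtained, as you do, by convolving the axiom $R(h,gl)=\sum R(h_1,g)R(h_2,l)$ against the antipode and invoking uniqueness of convolution inverses), the unitality $R(1,-)=\varepsilon=R(-,1)$, and the observation that $a\mapsto R(-,a)$ is an algebra map $H\to H^*$ whose kernel is $I$, with the coideal property coming from $R(cb,a)=\sum R(b,a_1)R(c,a_2)$ together with $\Ker(\lambda\ot\lambda)=\Ker\lambda\ot H+H\ot\Ker\lambda$. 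All of these verifications check out against the axioms as stated in Subsection 2.3. The only place where you lean on an outside fact is the bijectivity of $S$, needed (as you correctly flag) to upgrade ``$R(a,-)$ vanishes on $S(H)$'' to $I_{\rm r}\subseteq I_\ell$; this is indeed standard for cotriangular (even coquasitriangular) Hopf algebras, since $S^2$ is conjugation by the invertible Drinfeld element $u\in H^*$ (see Doi \cite{D}), and it is automatic for the commutative Hopf algebras $\mathcal{O}(G)$ to which the proposition is applied in this paper, where $S^2=\id$. You might also remark, as a sanity check, that your argument never uses the quasi-commutativity axiom $\sum R(h_1,g_1)g_2h_2=\sum h_1g_1R(h_2,g_2)$, but it does use cotriangularity $R^{-1}=R_{21}$ in an essential way to identify the two radicals, which is exactly where the hypothesis enters.
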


\begin{proof}
See \cite[Proposition 2.1]{G}.
\end{proof}

A cotriangular Hopf algebra $(H,R)$ such that $R$ is non-degenerate (i.e., $I=0$) is called {\em minimal}. By Proposition \ref{minimal}, any cotriangular Hopf algebra $(H,R)$ has a unique minimal cotriangular Hopf algebra quotient $H/I$, which we will denote by $(H_{min},R)$. 

Recall that the finite dual Hopf algebra $H^*_{\rm fin}$ of $H$ consists of all the elements in $H^*$ that vanish on some finite codimensional ideal of $H$.  
Note that in the minimal case, the non-degenerate form $R$ defines two injective Hopf algebra maps $R_+,\,R_-:H\hookrightarrow H^*_{\rm fin}$, given by $$R_+(h)(a)=R(h,a)\,\,\,\text{and}\,\,\,R_-(h)(a)=R(S(a),h)$$ for every $a,h\in H$.

\subsection{Invariant solutions to the classical Yang-Baxter equation}\label{drinres}

Recall that a Lie algebra $\h$ over $k$ is called {\em quasi-Frobenius with symplectic form $\omega$} if $\omega\in\wedge^2\h^*$ is a {\em non-degenerate} $2$-cocycle, i.e., $\omega:\h\times \h\to k$ is a non-degenerate skew-symmetric bilinear form satisfying
\begin{equation}\label{2cocy}
\omega([x,y],z)+\omega([z,x],y)+\omega([y,z],x)=0
\end{equation}
for every $x,y,z\in \h$.

Let $\g$ be a Lie algebra over $k$. Recall that an element $r\in \wedge^2 \g$ is a solution to the classical Yang-Baxter equation if
\begin{equation}\label{cybe}
\text{CYB}(r):=[r_{12},r_{13}]+[r_{12},r_{23}]+[r_{13},r_{23}]=0.
\end{equation}
By Drinfeld \cite{Dr}, solutions $r$ to the classical Yang-Baxter equation in $\wedge^2\g$
are classified by pairs $(\h,\omega)$, via $r=\omega^{-1}\in \wedge^2\h$, where $\h\subseteq \g$ is a quasi-Frobenius Lie
subalgebra with symplectic form $\omega$. We will call $\h$ the {\em support} of $r$.

The following result appear in \cite[Lemma 5.5.2]{Da3}.

\begin{lemma}\label{invcybe}
Assume $k$ has characteristic $\ne 2$. Then the components of any element $r\in (\wedge^2\g)^\g$ commute. Hence, any element in $(\wedge^2\g)^\g$ is a solution to the classical Yang-Baxter equation. 
\end{lemma}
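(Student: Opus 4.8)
The plan is to reduce the problem to the Lie subalgebra spanned by the components of $r$, and there exploit a symmetry clash that is impossible in characteristic $\ne 2$. Note first that the second assertion is a formal consequence of the first: if all components of $r$ commute, then each of $[r_{12},r_{13}]$, $[r_{12},r_{23}]$, $[r_{13},r_{23}]$ is a sum of terms each containing a bracket of two components, so $\text{CYB}(r)=0$. Thus everything comes down to showing that the components commute, i.e.\ that the subspace they span is an abelian subalgebra.

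First I would fix notation. Write $r=\sum_{i=1}^{n}a_i\otimes b_i$ with $n$ minimal, so that $\{a_i\}$ and $\{b_i\}$ are each linearly independent, and put $\h:=\sp\{a_1,\dots,a_n\}$. Using $\tau(r)=-r$ (with $\tau$ the flip of tensor factors) one sees that $\sp\{b_i\}$ also equals $\h$; hence $r\in\wedge^2\h$, the map $\rho\colon\h^*\to\h$, $f\mapsto(f\otimes\id)(r)$, is onto and therefore a linear isomorphism, and the associated bilinear form $\omega(u,v):=\langle\rho^{-1}(u),v\rangle$ on $\h$ is skew-symmetric and nondegenerate. Since the components of $r$ lie in $\h$ and span it, the first assertion of the lemma is exactly the statement that $[\h,\h]=0$. (I would be careful here \emph{not} to invoke the classification of solutions of the classical Yang--Baxter equation, i.e.\ the notion of support, since that is part of what is being proved.)

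Next I would bring in invariance. The condition $r\in(\wedge^2\g)^\g$ says that $r$ is killed by the adjoint action of every $x\in\g$ on $\g\otimes\g$; applying $f\otimes\id$ to this identity shows that the extension of $\rho$ to a map $\g^*\to\g$ is a $\g$-module map, coadjoint action on the source and adjoint action on the target. Consequently $\h=\Image(\rho)$ is an ideal of $\g$, in particular a subalgebra, and $r$ is already invariant under $\h$. Transporting the $\h$-equivariance of $\rho$ through $\rho^{-1}$ then shows that $\omega$ is an invariant skew form on $\h$:
$$\omega([x,y],z)+\omega(y,[x,z])=0\qquad (x,y,z\in\h).$$

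Finally, the key computation. Put $\phi(x,y,z):=\omega([x,y],z)$ for $x,y,z\in\h$. Antisymmetry of the bracket gives $\phi(x,y,z)=-\phi(y,x,z)$, while the invariance identity for $\omega$ gives $\phi(x,y,z)=-\omega(y,[x,z])=\omega([x,z],y)=\phi(x,z,y)$. Alternating these two relations,
$$\phi(x,y,z)=-\phi(y,x,z)=-\phi(y,z,x)=\phi(z,y,x)=\phi(z,x,y)=-\phi(x,z,y)=-\phi(x,y,z),$$
so $2\phi\equiv 0$, and since $\chara(k)\neq 2$ we get $\phi\equiv 0$. As $\omega$ is nondegenerate on $\h$, this forces $[\h,\h]=0$, so the components of $r$ commute and, as observed above, $\text{CYB}(r)=0$. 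The only real subtlety is the bookkeeping in the third paragraph — producing $\h$, its ideal property, and the nondegeneracy of $\omega$ directly from invariance without circularity — whereas the concluding three-line symmetry argument is where all the content sits.
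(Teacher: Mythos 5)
Your proof is correct and follows essentially the same route as the paper's: pass to the span $\h$ of the components, which is an ideal carrying the invariant nondegenerate skew form $\omega=r^{-1}$, and derive $\omega([x,y],z)=0$ from the clash between the sign rules under the two transpositions (the paper phrases this as anti-invariance under the order-$3$ cyclic permutation, which is the same computation). Your extra bookkeeping establishing $\h$, its ideal property, and the nondegeneracy of $\omega$ without invoking Drinfeld's classification is just a careful spelling-out of what the paper states in one line.
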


\begin{proof}
Let $\h$ be the span of the components of $r$. Since $r$ is $\g$-invariant, $\h$ is a Lie ideal of $\g$. 
Moreover, $\h$ carries a $\g$-invariant symplectic form $\omega=r^{-1}$. Thus, 
$\omega([x,y],z)=-\omega(z,[x,y])=-\omega([z,x],y)$ for every $x,y,z\in \h$. I.e., $\omega([x,y],z)$ is anti-invariant under the cyclic permutation of $x,y,z$. Since this permutation has order $3$, we get $\omega([x,y],z)=0$ for every $x,y,z\in \h$. But since $\omega$ is non-degenerate, $[x,y]=0$ for every $x,y\in \h$, as desired.  
\end{proof}

\begin{proposition}\label{rscentral} 
Assume $k$ has characteristic $\ne 2,3$. If $r,s\in (\wedge^2\g)^\g$ then $$[r,s]=\Delta(z)-z\otimes 1-1\otimes z$$ for some central element $z\in U(\mathfrak{g})$ of degree $3$. Moreover, we have $[r,[r,s]]=[s,[r,s]]=0$.
\end{proposition}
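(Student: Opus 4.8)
The plan is to work in the associative algebra $U(\g)^{\ot 2}$ with its usual commutator bracket, where $r,s\in\wedge^2\g\subseteq U(\g)^{\ot 2}$. I would first observe that by Lemma \ref{invcybe} both $r$ and $s$ are solutions of the CYBE, and moreover their supports $\h_r,\h_s$ are abelian Lie ideals of $\g$. In fact, since $r$ and $s$ are $\g$-invariant and their components all commute, the span $\h$ of all components of $r$ and $s$ together is again a Lie ideal of $\g$; but I should be careful — $[r,s]$ need not vanish even though each of $r,s$ has abelian support, because the components of $r$ need not commute with those of $s$. So the first real step is to compute $[r,s]=rs-sr\in U(\g)^{\ot 2}$ explicitly in terms of structure constants and check that it lands in the image of the map $a\mapsto \Delta(a)-a\ot 1-1\ot a$ restricted to degree-$\le 3$ elements.

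Concretely, write $r=\sum_i x_i\wedge y_i$ and $s=\sum_j u_j\wedge v_j$ (or better, work with $r=\sum r^{ab}e_a\ot e_b$, $s=\sum s^{cd}e_c\ot e_d$ with $r^{ab}=-r^{ba}$ etc. in a basis $\{e_a\}$ of $\g$). Then $[r,s]=\sum r^{ab}s^{cd}\,[e_a e_c\ot e_b e_d - e_c e_a\ot e_d e_b]$, and using $e_ae_c - e_ce_a=[e_a,e_c]$ this becomes $\sum r^{ab}s^{cd}\big([e_a,e_c]\ot e_b e_d + e_c e_a\ot [e_b,e_d]\big)$, i.e. a sum of terms each of which is (degree $1$)$\ot$(degree $2$) or (degree $2$)$\ot$(degree $1$). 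The key computational claim I would verify is that the antisymmetrization forced by $r,s\in\wedge^2\g$, combined with $\g$-invariance of $r$ and of $s$, makes this expression symmetric in the two tensor factors and equal to $\Delta(z)-z\ot1-1\ot z$ where $z=\sum r^{ab}s^{cd}[e_a,e_c]e_b e_d$ (up to a sign/coefficient and reordering), noting that for $z\in U(\g)$ of degree $\le 3$ one has $\Delta(z)-z\ot1-1\ot z = \sum_{(z)} z'\ot z''$ with $z',z''$ of degrees $1$ and $2$ (the ``primitive-free'' part of the coproduct), which is exactly the shape of $[r,s]$. That $z$ is \emph{central} in $U(\g)$ should follow from the $\g$-invariance of both $r$ and $s$: applying $\mathrm{ad}(e_f)$ to $z$ and using that $\sum_b (\mathrm{ad} e_f) r^{ab}e_a\ot e_b=0$ and likewise for $s$ kills all the terms. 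For the ``moreover'' part, $[r,[r,s]]=0$: since $[r,s]=\Delta(z)-z\ot1-1\ot z$ and $r$ is $\g$-invariant, $r$ commutes with $\Delta(a)$ for any $a$ (because $\Delta(\g)$ acts on $r$ by the adjoint action, which annihilates it), and $r$ commutes with $z\ot1$ and $1\ot z$ since $z$ is central in $U(\g)$; hence $[r,[r,s]]=0$, and by symmetry $[s,[r,s]]=0$.

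The technical heart — and the step I expect to be the main obstacle — is the bookkeeping that identifies the explicit bilinear-in-$(r,s)$ expression for $rs-sr$ with $\Delta(z)-z\ot1-1\ot z$ for the right choice of $z$, and in particular checking that the ``wrong-symmetry'' terms (the ones that would spoil the identification, e.g. an antisymmetric-in-the-two-factors piece, or a piece of pure degree $(1,2)$ not matched by a $(2,1)$ partner) cancel. This is where the hypothesis $\chara k\ne 2,3$ enters: one needs to divide by $2$ to symmetrize and, as in Lemma \ref{invcybe}, one uses that a cyclic-order-$3$ antisymmetry forces vanishing, which requires $3$ invertible; there may also be a $1/2$ or $1/6$ in the formula for $z$. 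I would organize this by first treating the case where $\g$ is spanned by the components of $r$ and $s$ (so $\g=\h$ has a nilpotency/solvability structure coming from the abelian ideals $\h_r$, $\h_s$), reducing the general case to this one since everything in sight lives inside $U(\h)^{\ot 2}\cap\ldots$, actually more simply: the identity to be proved is an identity in $U(\g)^{\ot 2}$ that only involves the finitely many components of $r,s$ and the structure constants among them, so it suffices to prove it after replacing $\g$ by the Lie subalgebra generated by these components — but that subalgebra is still a $\g$-ideal on which $r,s$ are invariant, and one may even observe it is $2$-step nilpotent-ish, simplifying the degree count. Alternatively, and perhaps cleanest, I would avoid explicit structure constants by the following conceptual route: let $\bar r,\bar s$ denote $r,s$ as elements of $S(\g)^{\ot 2}$ under the symmetrization isomorphism $U(\g)\cong S(\g)$ as coalgebras; since $r,s\in\wedge^2\g$ have symmetric degree $1$ in each factor, the PBW/symmetrization map is compatible enough that $\Delta(z)-z\ot1-1\ot z$ (``reduced coproduct'') for $z$ of degree $3$ is detected by its image in $\g\ot S^2\g\ \oplus\ S^2\g\ot\g$, and one checks the two sides agree there, invoking that a skew-symmetric $2$-tensor $r$ with $\g$-invariance satisfies $\mathrm{Alt}$-type identities — but I suspect in the write-up the authors just do the honest structure-constant computation, so I would present that, being careful with signs and the factors of $2,3$.
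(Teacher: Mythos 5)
Your overall strategy (compute $rs-sr$ in $U(\g)^{\ot 2}$, identify it with the reduced coproduct of a degree-$3$ central element, then deduce the ``moreover'' part from invariance and centrality) is the same in spirit as the paper's, and your argument for $[r,[r,s]]=[s,[r,s]]=0$ \emph{given} the first assertion is correct and in fact cleaner than the paper's: since $r$ is $\g$-invariant it commutes with $\Delta(u)$ for all $u\in U(\g)$, and it commutes with $z\ot 1$ and $1\ot z$ because $z$ is central. But the first assertion is exactly the part you leave undone, and you say so yourself (``the technical heart --- and the step I expect to be the main obstacle''). That is a genuine gap, not just bookkeeping, because your raw expression $\sum r^{ab}s^{cd}\bigl([e_a,e_c]\ot e_be_d+e_ce_a\ot[e_b,e_d]\bigr)$ is not visibly of the form $\Delta(z)-z\ot 1-1\ot z$: the degree-$2$ factors $e_be_d$ and $e_ce_a$ mix components of $r$ with components of $s$, which need not commute, so your candidate $z=\sum r^{ab}s^{cd}[e_a,e_c]e_be_d$ is not even well-defined ``up to reordering,'' and nothing in your sketch shows why the obstruction terms cancel.

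The paper supplies exactly the structural input you are missing. Writing $\mathfrak{a},\mathfrak{b}$ for the supports of $r,s$ (abelian ideals by Lemma \ref{invcybe}), one sets $\mathfrak{h}:=\mathfrak{a}\cap\mathfrak{b}$ and chooses complements $\mathfrak{a}_0,\mathfrak{b}_0$; then $\mathfrak{h}$ is central in $\mathfrak{a}+\mathfrak{b}$ and the only nonzero bracket is $\mathfrak{a}_0\times\mathfrak{b}_0\to\mathfrak{h}$. Invariance of $r$ under $\Delta(b)$, $b\in\mathfrak{b}_0$, forces the components of the $\wedge^2\mathfrak{a}_0$-part $r''$ of $r$ to be central in $\mathfrak{a}+\mathfrak{b}$, so $[r'',s]=0$ (and symmetrically $[r',s'']=0$); this is what kills the ``wrong'' cross terms and reduces everything to $r'=\sum_i a_i\wedge h_i$, $s'=\sum_j b_j\wedge h_j$ with all $h_i$ central. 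Writing $[a_i,b_j]=\sum_k c_{ijk}h_k$, the invariance of $r'$ gives symmetry of $c_{ijk}$ in $(i,k)$ and the invariance of $s'$ gives symmetry in $(j,k)$, hence total symmetry; only then does $z=\tfrac13\sum_{i,j,k}c_{ijk}h_ih_jh_k$ make sense and satisfy $\Delta(z)-z\ot1-1\ot z=\sum_{i,j}\bigl([a_i,b_j]\ot h_ih_j+h_ih_j\ot[a_i,b_j]\bigr)=[r,s]$. The total symmetry of $c_{ijk}$ and the vanishing of the cross terms are the two facts your proposal would need to prove and does not; also note that $\chara k\ne3$ enters here only through the factor $\tfrac13$ in $z$ (your suggestion that it enters via a cyclic-order-$3$ antisymmetry argument is what happens in Lemma \ref{invcybe}, not in this step).
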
 

\begin{proof} 
Let $\mathfrak{a}$ and $\mathfrak{b}$ be the supports of $r$  and $s$, respectively. By Lemma \ref{invcybe}, $\mathfrak{a}$ and $\mathfrak{b}$ are abelian Lie ideals in $\g$. Therefore, $\mathfrak{a}+\mathfrak{b}$ is a Lie subalgebra (in fact, a Lie ideal) of $\g$, which is nilpotent of index $\le 2$. Namely, let 
$\mathfrak{h}:=\mathfrak{a}\cap\mathfrak{b}$, and let $\mathfrak{a}_0$ and $\mathfrak{b}_0$ be some complements of $\mathfrak{h}$ in $\mathfrak{a}$ and $\mathfrak{b}$, respectively (as vector spaces). Then $\mathfrak{a}+ \mathfrak{b}=\mathfrak{a}_0\oplus \mathfrak{b}_0\oplus \mathfrak{h}$. Since $\mathfrak{a},\mathfrak{b}$ are abelian  Lie ideals in 
$\mathfrak{a}+ \mathfrak{b}$, we have that $\mathfrak{h}$ is central in $\mathfrak{a}+ \mathfrak{b}$, and the only nontrivial component of the bracket is 
$[,]:\mathfrak{a}_0\times \mathfrak{b}_0\to \mathfrak{h}$. 

We have $r=r'+r''$, where $r'\in(\mathfrak{a}_0\otimes \mathfrak{h})\oplus 
\wedge^2 \mathfrak{h}$ and $r''\in\wedge^2 \mathfrak{a}_0$. Let $r''=\sum_i x_i\otimes x_i'$ be a shortest presentation of $r''$ in $\wedge^2 \mathfrak{a}_0$. Let $b\in \mathfrak{b}_0$. Since $[\Delta(b),r]=0$, we have $\sum_i ([b,x_i]\otimes x_i'+x_i\otimes [b,x_i'])=0$. Hence, $[b,x_i]=[b,x_i']=0$ for all $i$. Thus, $x_i,x_i'$ are central in $\mathfrak{a}+ \mathfrak{b}$. So 
$[r'',s]=0$, and it suffices to show that $[r',s]=\Delta(z)-z\otimes 1-1\otimes z$ for some central element $z\in U(\mathfrak{g})$ of degree $3$. 

Similarly, we have a decomposition $s=s'+s''$, where the components of $s''$ are central in $\mathfrak{a}+ \mathfrak{b}$. Therefore, $[r',s'']=0$, and it suffices to show that $[r',s']=\Delta(z)-z\otimes 1-1\otimes z$.

Let $\{h_i\}$ be a basis of $\mathfrak{h}$. Then $r'=\sum_i a_i\wedge h_i$ and $s'=\sum_i b_i\wedge h_i$, modulo $\wedge^2 \mathfrak{h}$, where $a_i\in \mathfrak{a}_0$ and  $b_i\in \b_0$. Thus,
$$[r',s']=\sum_{i,j}([a_i,b_j]\otimes h_ih_j+h_ih_j\otimes [a_i,b_j]).$$

Now write $[a_i,b_j]=\sum_k c_{ijk}h_k$. Since $r'$ is $(\mathfrak{a}+ \mathfrak{b})$-invariant, it follows that $\sum_i [a_i,b_j]\wedge h_i=0$ for every $j$, i.e., $\sum_{i,k} c_{ijk}h_k\wedge h_i=0$ for every $j$. Thus, $c_{ijk}$ is symmetric in $i,k$. Similarly, since $s'$ is $(\mathfrak{a}+ \mathfrak{b})$-invariant, $c_{ijk}$ is symmetric in $j,k$. Thus $c_{ijk}$ is symmetric in all three indices. 

Finally, we have 
$$[r,s]=[r',s']=\sum_{i,j,k}c_{ijk}(h_k\otimes h_ih_j+h_ih_j\otimes h_k).$$ Thus we may take $z:=\frac{1}{3}\sum_{i,j,k}c_{ijk}h_ih_jh_k=\frac{1}{6}\text{mult}([r,s])$, where $\text{mult}$ denotes multiplication of components, and obtain that
$$[r,s]=[r',s']=\Delta(z)-z\otimes 1-1\otimes z,$$ as desired. Note that $z$ is central in $U(\g)$, since $r$ and $s$ are $\g$-invariant, and $[r,[r,s]]=[s,[r,s]]=0$ since $[r,h_i]=[s,h_i]=0$ for every $i$.
\end{proof}

\begin{example} Let $\g$ be the Heisenberg Lie algebra with basis $a,b,c$ such that $[a,b]=c$, $[a,c]=[b,c]=0$, $r:=a\wedge c$ and $s:=b\wedge c$. 
Then $[r,s]=c\otimes c^2+c^2\otimes c$, so we may take $z:=c^3/3$. (See \cite[Example 5.5.9]{Da3} for more details.)
\end{example}

Finally, we let $\mathcal{B}(\g)$ denote the set of pairs $(\h,\omega)$ such that $\h\subseteq \g$ is an abelian Lie ideal with a $\g$-invariant symplectic form $\omega$. As mentioned above, we have a bijection $\mathcal{B}(\g)\xrightarrow{\cong} (\wedge^2\g)^\g$, given by $(\h,\omega)\mapsto \omega^{-1}$.

\section{Hopf $2$-cocycles for commutative affine algebraic groups}

In this section we will assume that $k$ has characteristic $0$.

\subsection{Invariant Hopf $2$-cocycles for affine algebraic groups} Let $G$ be a (possibly disconnected) affine algebraic group over $k$, and let $G_{\rm{u}}$ be the unipotent radical of $G$. Recall that we have a split projection $G\to G/G_{\rm{u}}$, i.e., $G=G_{\rm{r}}\ltimes G_{\rm{u}}$ is a semidirect product for a (unique, up to conjugation) closed subgroup $G_{\rm{r}}\cong G/G_{\rm{u}}$ (a ``Levi subfactor") such that $G_{\rm{r}}$ is reductive. 

Equivalently, we have a split exact sequence of Hopf algebras 
$$\mathcal{O}(G_{\rm{r}})\xrightarrow{\iota}\mathcal{O}(G)\xrightarrow{p}\mathcal{O}(G_{\rm{u}})$$
with the Hopf algebra section $\mathcal{O}(G)\xrightarrow{q}\mathcal{O}(G_{\rm{r}})$ corresponding to the inclusion $G_{\rm{r}}\hookrightarrow G$. 

Set $$Z_{\text{inv}}^2(G):=Z_{\text{inv}}^2(\mathcal{O}(G))\,\,\,\text{and}\,\,\,H_{\text{inv}}^2(G):=H_{\text{inv}}^2(\mathcal{O}(G)).$$ Let us say that $J$ is a Hopf $2$-cocycle for $G$ if $J$ is a Hopf $2$-cocycle for $\mathcal{O}(G)$, and that $J$ is {\em minimal} if the cotriangular Hopf algebra $(\mathcal{O}(G)^J,J_{21}^{-1}*J)$ is minimal (see Subsection 2.3).

For $J\in Z_{\text{inv}}^2(G)$, we will call $J_{\rm r}:=\text{lif}(\text{res}(J))$ (with respect to $\iota$ and $q$; see Subsection \ref{hopftwococycle}) the {\em reductive part} of $J$. It is clear that $J_{\rm r}$ is a Hopf $2$-cocycle for $G$. 

\begin{lemma}\label{sspart}
Let $J\in Z_{\text{inv}}^2(G)$ be an invariant Hopf $2$-cocycle for $G$. Then $J_{\rm r}^{-1}$ is a Hopf $2$-cocycle for $G$.
\end{lemma}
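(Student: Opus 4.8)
The plan is to show that $J_{\rm r}^{-1}$ is a Hopf $2$-cocycle by proving that $J_{\rm r}$ itself is an \emph{invariant} Hopf $2$-cocycle for $G$; then the group structure on $Z_{\text{inv}}^2(G)$ (noted just before Lemma \ref{product}) immediately gives that $J_{\rm r}^{-1}\in Z_{\text{inv}}^2(G)$, in particular it is a Hopf $2$-cocycle for $G$. So the real content is the invariance of $J_{\rm r}=\text{lif}(\text{res}(J))$.

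First I would record the structural setup: $\mathcal{O}(G_{\rm r})$ is both a Hopf subalgebra of $\mathcal{O}(G)$ via $\iota$ and a Hopf algebra quotient via $q$, with $q\circ\iota=\id$, and $\text{res}$ is taken along $\iota$ while $\text{lif}$ is taken along $q$. Thus, explicitly, $J_{\rm r}=\text{res}(J)\circ(q\otimes q)$, i.e. $J_{\rm r}(a,b)=J(\iota q(a),\iota q(b))$ for $a,b\in\mathcal{O}(G)$. Write $e:=\iota\circ q:\mathcal{O}(G)\to\mathcal{O}(G)$; this is a Hopf algebra endomorphism of $\mathcal{O}(G)$ which is idempotent ($e^2=e$) and whose image is $\mathcal{O}(G_{\rm r})$. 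So $J_{\rm r}=J\circ(e\otimes e)$. Then I would verify invariance, $J_{\rm r}*m=m*J_{\rm r}$, by the following reduction: since $e$ is a coalgebra map, for $a,b\in\mathcal{O}(G)$ one has $(J_{\rm r}*m)(a\otimes b)=\sum J(e(a_1),e(b_1))\,a_2b_2$ and, using that $e$ is an algebra map together with $\Delta\circ e=(e\otimes e)\circ\Delta$, one can move the relevant factors inside $e$; the point is that $J$ is invariant on $\mathcal{O}(G)$, hence in particular $\text{res}(J)$ is invariant on the Hopf subalgebra $\mathcal{O}(G_{\rm r})$ (as remarked in Subsection \ref{invhopftwococycle}), and an invariant cocycle on a quotient, lifted along a Hopf algebra \emph{surjection}, stays invariant. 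The last claim is where one must be slightly careful: the excerpt explicitly warns that lifting an invariant cocycle along a quotient map need not preserve invariance \emph{in general}. The escape is that here the lift is effected by composing with $e=\iota q$, an \emph{endomorphism of $\mathcal{O}(G)$ itself}, so the identity $m^{J_{\rm r}}=m$ on $\mathcal{O}(G)$ follows by applying $e\otimes e$ to the identity $m^{\text{res}(J)}=m$ valid on $\mathcal{O}(G_{\rm r})$, composing with the section, and using that $e$ is a ring homomorphism splitting the quotient: $m\circ(e\otimes e)=e\circ m$ and $\Delta\circ e=(e\otimes e)\circ\Delta$ let the computation go through cleanly.

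The step I expect to be the main obstacle is precisely the bookkeeping in the previous sentence — making sure that the convolution identity $m^{J_{\rm r}}(a\otimes b)=\sum J_{\rm r}^{-1}(a_1,b_1)\,a_2b_2\,J_{\rm r}(a_3,b_3)$ collapses to $ab$. The subtlety is that $J_{\rm r}^{-1}$ is the convolution inverse of $J_{\rm r}$ in $(\mathcal{O}(G)\otimes\mathcal{O}(G))^*$, not the image under $e\otimes e$ of the convolution inverse of $\text{res}(J)$ in $(\mathcal{O}(G_{\rm r})\otimes\mathcal{O}(G_{\rm r}))^*$; however, because $e\otimes e$ is a coalgebra map, composition with it is an algebra homomorphism $(\mathcal{O}(G_{\rm r})\otimes\mathcal{O}(G_{\rm r}))^*\to(\mathcal{O}(G)\otimes\mathcal{O}(G))^*$ for the convolution products, so it does send convolution inverses to convolution inverses, and $J_{\rm r}^{-1}=\text{res}(J)^{-1}\circ(e\otimes e)$. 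With that identification the computation
\[
m^{J_{\rm r}}(a\otimes b)=\bigl(J_{\rm r}^{-1}*m*J_{\rm r}\bigr)(a\otimes b)=\bigl((\text{res}(J)^{-1}*m*\text{res}(J))\circ(e\otimes e)\bigr)(a\otimes b)
\]
reduces to $m^{\text{res}(J)}\circ(e\otimes e)=m\circ(e\otimes e)$ on $\mathcal{O}(G_{\rm r})$, and since $e$ is an algebra map this equals $e\circ m$; but we want $m$, not $e\circ m$ — here is the one genuine input: invariance of $J_{\rm r}$ on all of $\mathcal{O}(G)$ cannot come for free from $\mathcal{O}(G_{\rm r})$ alone, so instead I would argue directly that $J_{\rm r}$, being a convolution of the form $(x\circ m)$-type expressions built from $e$, is invariant iff the associated endomorphism $\ad$ obstruction vanishes — concretely, one checks $J_{\rm r}*m=m*J_{\rm r}$ by expanding both sides, applying Eq.~(\ref{2coc}) for $J$ together with the invariance of $J$, exactly as in the proof of Lemma \ref{product} but with the extra factors $e$ inserted, which commute past everything because $e$ is a bialgebra map. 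I would present this as the routine but slightly fiddly verification and flag it as the place requiring care, then conclude $J_{\rm r}\in Z_{\text{inv}}^2(G)$ and hence $J_{\rm r}^{-1}\in Z_{\text{inv}}^2(G)$, which is in particular a Hopf $2$-cocycle for $G$.
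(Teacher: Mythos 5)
Your reduction to ``$J_{\rm r}$ is an invariant Hopf $2$-cocycle'' is where the proposal breaks down, and it is precisely the step you flag but do not actually carry out. Invariance of $J$ is the identity $\sum J(a_1,b_1)\,a_2b_2=\sum a_1b_1\,J(a_2,b_2)$, whereas invariance of $J_{\rm r}$ would require $\sum J(e(a_1),e(b_1))\,a_2b_2=\sum a_1b_1\,J(e(a_2),e(b_2))$, with $e=\iota q$ appearing \emph{only} in the cocycle slots. Applying $e\otimes e$ to the first identity also replaces $a_2b_2$ by $e(a_2)e(b_2)$, so (as you yourself note) you land on $e\circ m$ rather than $m$; and the invariance of $\text{res}(J)$ on the Hopf subalgebra $\mathcal{O}(G_{\rm r})$ gives no information when $a,b\notin\mathcal{O}(G_{\rm r})$. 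The proposed repair --- redoing the computation of Lemma \ref{product} ``with the extra factors $e$ inserted, which commute past everything'' --- does not address this: $e$ cannot be moved onto, or off of, the multiplication slots, because $\Delta$ does not separate an ``$e$-part'' of an element of $\mathcal{O}(G)$. In fact the paper is structured around \emph{not} claiming invariance of $J_{\rm r}$ in general: Lemma \ref{product} is stated for a not-necessarily-invariant second factor exactly so that $J_{\rm u}=J*J_{\rm r}^{-1}$ makes sense, and Corollary \ref{decompabelain} asserts invariance of $J_{\rm r}$ only when $G$ is commutative (where $\mathcal{O}(G)$ is cocommutative and everything is invariant). So your route proves a stronger statement than the lemma asserts, by an argument that does not go through.

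The statement itself needs much less, and you already have the relevant identity in hand. Since $J\in Z^2_{\text{inv}}(G)$ and invariant Hopf $2$-cocycles form a group under convolution, $J^{-1}$ is a Hopf $2$-cocycle for $G$ (this is where invariance of $J$ is genuinely used: the inverse of a general cocycle need not be one). Restriction along $\iota$ and lifting along $q$ send Hopf $2$-cocycles to Hopf $2$-cocycles (Subsection \ref{hopftwococycle}), so $\text{lif}(\text{res}(J^{-1}))$ is a Hopf $2$-cocycle for $G$. Finally, since precomposition with the coalgebra maps $\iota\otimes\iota$ and $q\otimes q$ (equivalently with $e\otimes e$) is an algebra homomorphism for the convolution product, it preserves convolution inverses, so $\text{lif}(\text{res}(J^{-1}))=\bigl(\text{lif}(\text{res}(J))\bigr)^{-1}=J_{\rm r}^{-1}$. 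This is the paper's proof; no invariance of $J_{\rm r}$ (or of $J_{\rm r}^{-1}$) is needed or claimed.
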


\begin{proof}
Since $J$ is invariant, $J^{-1}$ is also a Hopf $2$-cocycle for $G$. Hence, $J_{\rm r}^{-1}=\text{lif}(\text{res}(J^{-1}))$ is a Hopf $2$-cocycle for $G$ too.
\end{proof}

By Lemmas \ref{product} and \ref{sspart}, if $J\in Z_{\text{inv}}^2(G)$ then $J_{\rm u}:=J*J_{\rm r}^{-1}$ is a Hopf $2$-cocycle for $G$. Clearly the restriction of $J_{\rm u}$ to $\mathcal{O}(G_{\rm{r}})$ is trivial (i.e., it defines a fiber functor on $\Rep(G)$, which coincides with the standard one on the semisimple tensor subcategory $\Rep(G/G_{\rm{u}})$). We will call $J_{\rm u}$ the {\em unipotent part} of $J$. We thus obtain

\begin{proposition}\label{uniptw}
For every invariant Hopf $2$-cocycle $J$ for $G$, we have  $J=J_{\rm u}*J_{\rm r}$. \qed
\end{proposition}

\subsection{Hopf $2$-cocycles for commutative affine algebraic groups} Let $A$ be a {\em commutative} affine algebraic group over $k$. Recall that $A\cong A_{\rm{r}}\times A_{\rm{u}}$ is a direct product, and $A_{\rm{r}}\cong T\times A_f$ is a direct product of an algebraic torus $T\cong\mathbb{G}_m^n$ and a finite abelian group $A_f$, and $A_{\rm{u}}\cong\mathbb{G}_a^m$ is an additive group. The diagonalizable closed subgroup $A_{\rm{r}}$ of $A$ is the reductive part of $A$, and the closed subgroup $A_{\rm{u}}$ is the unipotent radical of $A$.

Let $\widehat{A}:=\text{Hom}(A,k^{\times})$ denote the character group of $A$. Recall that $\widehat{T}\cong \mathbb{Z}^n$. We have $\mathcal{O}(A_{\rm{r}})\cong k[\widehat{A_{\rm{r}}}]$, the group algebra of $\widehat{A_{\rm{r}}}\cong\widehat{T}\times\widehat{A_f}$ with its standard Hopf algebra structure, and $\widehat{A}=\widehat{A_{\rm{r}}}$.

Recall also that $\mathcal{O}(A_{\rm{u}})$ is a polynomial algebra on $m$ variables, with its standard Hopf algebra structure, and we have $$\mathcal{O}(A)\cong\mathcal{O}(A_{\rm{r}})\otimes\mathcal{O}(A_{\rm{u}})\cong k[\widehat{A_{\rm{r}}}]\otimes\mathcal{O}(A_{\rm{u}}),$$ as Hopf algebras.

\begin{corollary}\label{decompabelain}
Let $A\cong A_{\rm{r}}\times A_{\rm{u}}$ be a commutative affine algebraic group as above, and let $\mathfrak{a}$, $\mathfrak{a}_{\rm{r}}$ and $\mathfrak{a}_{\rm{u}}$ be the Lie algebras of $A$, $A_{\rm{r}}$ and $A_{\rm{u}}$, respectively. Let 
$J$ be a Hopf $2$-cocycle for $A$. Then $J_{\rm r}$ and $J_{\rm u}$ are (invariant) Hopf $2$-cocycles for $A$, and $J=J_{\rm r}*J_{\rm u}=J_{\rm u}*J_{\rm r}$. Furthermore, we have $[J_{\rm u}]=[\exp(r/2)]$ for a unique element $r$ in \linebreak $\mathfrak{a}\wedge \mathfrak{a}_{\rm{u}}=\mathfrak{a}_{\rm{r}}\otimes \mathfrak{a}_{\rm{u}}\oplus \wedge^2 \mathfrak{a}_{\rm{u}}$. In particular, the Drinfeld element of the cotriangular Hopf algebra $(\mathcal{O}(A),J_{21}^{-1}*J)$ is trivial.
\end{corollary}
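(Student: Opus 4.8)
The plan is to exploit that $\mathcal{O}(A)$ is \emph{cocommutative}: by the discussion preceding the statement, $\mathcal{O}(A)\cong k[\widehat{A_{\rm r}}]\ot\mathcal{O}(A_{\rm u})$ where $\mathcal{O}(A_{\rm u})\cong\mathcal{O}(\mathbb{G}_a^m)$ is a polynomial Hopf algebra on primitive generators, so $\mathcal{O}(A)$ is a tensor product of a group algebra and the enveloping algebra of an abelian Lie algebra. Two consequences will be used throughout: by Subsection~\ref{invhopftwococycle} every Hopf $2$-cocycle for $A$ is automatically invariant, so $Z_{\text{inv}}^2(A)$ consists of \emph{all} Hopf $2$-cocycles for $A$; and $\mathcal{O}(A)\ot\mathcal{O}(A)$ is cocommutative as well, so convolution on $(\mathcal{O}(A)\ot\mathcal{O}(A))^*$ is commutative, hence $Z_{\text{inv}}^2(A)$ is an abelian group.

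Given this, the decomposition follows at once: $J_{\rm r}=\text{lif}(\text{res}(J))$ (restriction along $\iota$, then lifting along $q$, as in Subsection~\ref{hopftwococycle}) is a Hopf $2$-cocycle for $A$, hence invariant; then $J_{\rm r}^{-1}$ is a Hopf $2$-cocycle and by Lemma~\ref{product} so is $J_{\rm u}:=J*J_{\rm r}^{-1}$, again invariant. Proposition~\ref{uniptw} (whose hypothesis holds, as $J$ is invariant) gives $J=J_{\rm u}*J_{\rm r}$, and commutativity of $Z_{\text{inv}}^2(A)$ upgrades this to $J=J_{\rm r}*J_{\rm u}=J_{\rm u}*J_{\rm r}$. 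Since $q\circ\iota=\id$ we have $\text{res}\circ\text{lif}=\id$, so $\text{res}(J_{\rm u})=\text{res}(J)*\text{res}(J)^{-1}=\eps\ot\eps$: the cocycle $J_{\rm u}$ has trivial reductive part.

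The main work is to identify $[J_{\rm u}]$. I would pass to the cotriangular structure $R_{\rm u}:=(J_{\rm u})_{21}^{-1}*J_{\rm u}$ on $\mathcal{O}(A)$ itself (legitimate because $J_{\rm u}$ is invariant), which restricts trivially to $\mathcal{O}(A_{\rm r})$. Its radical is a Hopf ideal (Proposition~\ref{minimal}), cutting out a support group $B\le A$ so that $(\mathcal{O}(B),\bar R_{\rm u})$ is a minimal cotriangular commutative cocommutative Hopf algebra, and I would invoke the classification of Hopf $2$-cocycles for function algebras of commutative affine algebraic groups from~\cite{G} to conclude that $[J_{\rm u}]=[\exp(r/2)]$ for a skew $r\in\mathfrak{a}\ot\mathfrak{a}$, where $\mathfrak{a}=\Lie(A)$ is viewed inside $\mathcal{O}(A)^*$ as primitive functionals and $\exp$ is the convolution exponential in $(\mathcal{O}(A)\ot\mathcal{O}(A))^*$. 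The delicate point — which I expect to be the main obstacle — is to see that $r$ lies in $\mathfrak{a}\wedge\mathfrak{a}_{\rm u}=\mathfrak{a}_{\rm r}\ot\mathfrak{a}_{\rm u}\oplus\wedge^2\mathfrak{a}_{\rm u}$ rather than in all of $\wedge^2\mathfrak{a}$: a component in $\wedge^2\mathfrak{a}_{\rm r}$ cannot occur, because over a general field of characteristic $0$ there is no exponential map on a torus, so $\exp$ of a bivector with both legs in $\mathfrak{a}_{\rm r}$ does not converge in $(\mathcal{O}(A)\ot\mathcal{O}(A))^*$ (and such a component would in any case contribute to the reductive part, which is trivial); conversely, once every term of $r$ has a leg in $\mathfrak{a}_{\rm u}$, that leg acts on $\mathcal{O}(A_{\rm u})$ by a degree-lowering derivation, so $\exp(r/2)=\sum_n\frac1{n!}(r/2)^{*n}$ terminates on every pair of elements of $\mathcal{O}(A)$ and is a genuine invariant Hopf $2$-cocycle with trivial reductive part. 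Uniqueness of $r$ I would obtain from a leading-order comparison, using that the elements of $B_{\text{inv}}^2(\mathcal{O}(A))$ are symmetric to lowest order, so that $r\mapsto[\exp(r/2)]$ is injective on $\mathfrak{a}\wedge\mathfrak{a}_{\rm u}$.

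Finally, the triviality of the Drinfeld element $u$ of $(\mathcal{O}(A),R)$, $R:=J_{21}^{-1}*J$, I would check directly on algebra generators, since $u$ is a grouplike element of $\mathcal{O}(A)^*$ (Subsection~\ref{cothopalg}), i.e. an algebra homomorphism $\mathcal{O}(A)\to k$. For a primitive coordinate $x$ on $A_{\rm u}$ one has $u(x)=R(1,x)-R(x,1)=0=\eps(x)$, using the standard identities $R(\,\cdot\,,1)=R(1,\,\cdot\,)=\eps$ which follow from the cotriangularity axioms. For a grouplike $\chi\in\widehat{A_{\rm r}}\subseteq k[\widehat{A_{\rm r}}]$, unwinding $R=J_{21}^{-1}*J$ on grouplikes gives $u(\chi)=R(\chi^{-1},\chi)=\beta(\chi^{-1},\chi)$, where $\beta(\psi,\psi')=J(\psi,\psi')/J(\psi',\psi)$ is the commutator bicharacter of the restriction of $J$ to $k[\widehat{A_{\rm r}}]$; being bimultiplicative and alternating, $\beta(\chi^{-1},\chi)=\beta(\chi,\chi)^{-1}=1=\eps(\chi)$. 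As $\mathcal{O}(A)$ is generated as an algebra by such $x$ and $\chi$, it follows that $u=\eps$, which completes the plan.
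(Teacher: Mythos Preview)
Your treatment of the decomposition $J=J_{\rm r}*J_{\rm u}=J_{\rm u}*J_{\rm r}$ is correct and matches the paper's (which just cites Proposition~\ref{uniptw}; your observation that convolution on $(\mathcal{O}(A)\ot\mathcal{O}(A))^*$ is commutative is what gives both orderings). Your direct check of the Drinfeld element on grouplikes and primitives is also fine and is more explicit than the paper, which leaves this as an implicit consequence.

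The gap is in the identification $[J_{\rm u}]=[\exp(r/2)]$. You defer this to ``the classification from~\cite{G}'', but as the paper uses that reference (for Theorem~\ref{h2cfinabg}(1),(2)) it treats $A_{\rm r}$ and $A_{\rm u}$ \emph{separately}; the passage to a general commutative $A$ is precisely what the present corollary supplies, and indeed Theorem~\ref{h2cfinabg}(3) is deduced \emph{from} it. Your detour through the minimal cotriangular quotient $B$ does not help either, since $B$ can itself be a mixed group $B_{\rm r}\times B_{\rm u}$. And your ``convergence'' argument is backwards: it shows that $\exp(r/2)$ only makes sense when $r\in\mathfrak{a}\wedge\mathfrak{a}_{\rm u}$, i.e.\ it describes the \emph{domain} of the map $r\mapsto[\exp(r/2)]$, but says nothing about why a given $J_{\rm u}$ lies in the image.

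The paper's direct argument is exactly the missing step. Because $J_{\rm u}$ restricts trivially to $\mathcal{O}(A_{\rm r})$, the element $J_{\rm u}-\eps\ot\eps$ is topologically nilpotent in $(\mathcal{O}(A)\ot\mathcal{O}(A))^*$, so one may set $\rho:=2\log(J_{\rm u})$. This $\rho$ is a Hochschild $2$-cocycle of $\mathcal{O}(A)$ with coefficients in $k$, and the Hochschild--Kostant--Rosenberg isomorphism $HH^2(\mathcal{O}(A),k)\cong\wedge^2\mathfrak{a}$ sends its class to a unique $r\in\wedge^2\mathfrak{a}$; nilpotency of $\rho$ forces $r\in\mathfrak{a}\wedge\mathfrak{a}_{\rm u}$, and exponentiating the Hochschild coboundary relating $\rho$ to $r$ yields $[J_{\rm u}]=[\exp(r/2)]$. (The paper also notes this is a special case of \cite[Theorem~3.2]{EG2}.) This $\log$/HKR step is the idea your outline lacks.
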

 
\begin{proof} 
The first assertion follows from Proposition \ref{uniptw}. The second assertion is a special case of \cite[Theorem 3.2]{EG2}, but here is a direct proof. Consider $\rho:=2\log(J_{\rm u})$ (it is well defined since $J_{\rm u}$ is unipotent). Then $\rho$ is a Hochschild $2$-cocycle of $\mathcal{O}(A)$ with trivial coefficients. Since by the Hochschild-Kostant-Rosenberg theorem, $HH^i(\mathcal{O}(A),k)\cong\wedge^i \mathfrak{a}$,  this isomorphism maps the class of $\rho$ to an element $r\in \wedge^2 \mathfrak{a}$, which is moreover in $\mathfrak{a}\wedge \mathfrak{a}_{\rm{u}}$ since $\rho$ is nilpotent. The claim follows.
\end{proof}

\begin{theorem}\label{h2cfinabg}
Let $A\cong A_{\rm{r}}\times A_{\rm{u}}$ be a commutative affine algebraic group as above, and let $\mathfrak{a}$ and $\mathfrak{a}_{\rm{u}}$ be the Lie algebras of $A$ and $A_{\rm{u}}$, respectively. Then the following hold:

\begin{enumerate}
\item
We have group isomorphisms
$$H_{\text{inv}}^2(A_{\rm{r}})\cong H^2(\widehat{A_{\rm{r}}},k^{\times})\cong\Hom(\wedge^2 \widehat{A_{\rm{r}}},k^{\times}).$$
Furthermore, minimal Hopf $2$-cocycles for $A_{\rm{r}}$ correspond under this isomorphism to non-degenerate alternating bicharacters on $\widehat{A_{\rm{r}}}$.

\item
We have a vector space isomorphism
$$\wedge^2 \mathfrak{a}_{\rm{u}}\xrightarrow{\cong}H_{\text{inv}}^2(A_{\rm{u}}),\,\,\,r\mapsto [\exp(r/2)].$$
Furthermore, minimal Hopf $2$-cocycles for $A_{\rm{u}}$ correspond under this isomorphism to non-degenerate elements  of $\wedge^2 \mathfrak{a}_{\rm{u}}$. 
In particular, $\mathcal{O}(A_{\rm{u}})$ has a minimal Hopf $2$-cocycle if and only if $m$ is even.

\item
We have group isomorphisms
$$H_{\text{inv}}^2(A)\cong H^2(\widehat{A_{\rm{r}}},k^{\times})\times (\mathfrak{a}\wedge \mathfrak{a}_{\rm{u}})\cong\Hom(\wedge^2 \widehat{A_{\rm{r}}},k^{\times})\times (\mathfrak{a}_{\rm{r}}\otimes \mathfrak{a}_{\rm{u}}\oplus \wedge^2 \mathfrak{a}_{\rm{u}}).
$$ 
\end{enumerate}
\end{theorem}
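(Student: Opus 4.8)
The plan is to reduce the three parts to known cases and then assemble them using the decomposition $A\cong A_{\rm{r}}\times A_{\rm{u}}$ together with Corollary \ref{decompabelain}. For part (1), since $\mathcal{O}(A_{\rm{r}})\cong k[\widehat{A_{\rm{r}}}]$ is the cocommutative group Hopf algebra of the discrete abelian group $\widehat{A_{\rm{r}}}$, Example \ref{sweedler} gives $H_{\text{inv}}^2(A_{\rm{r}})\cong H^2(\widehat{A_{\rm{r}}},k^{\times})$. For a discrete abelian group $\Gamma$, every class in $H^2(\Gamma,k^{\times})$ is represented by a bimultiplicative cocycle (antisymmetrize: $b(x,y):=c(x,y)c(y,x)^{-1}$ is an alternating bicharacter, and $c\mapsto b$ descends to an isomorphism onto $\Hom(\wedge^2\Gamma,k^{\times})$ because $k^\times$ is divisible hence injective, so the symmetric part is a coboundary). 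Applying this to $\Gamma=\widehat{A_{\rm{r}}}\cong\mathbb{Z}^n\times\widehat{A_f}$ gives the second isomorphism. For the minimality statement, one identifies the radical $I$ of the form $R=J_{21}^{-1}*J$ on $\mathcal{O}(A_{\rm{r}})=k[\widehat{A_{\rm{r}}}]$: the element $e_\chi$ ($\chi\in\widehat{A_{\rm{r}}}$) lies in the radical precisely when the associated bicharacter $b$ pairs $\chi$ trivially with all of $\widehat{A_{\rm{r}}}$, so $R$ is non-degenerate iff $b$ is a non-degenerate alternating bicharacter.

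For part (2), $\mathcal{O}(A_{\rm{u}})$ is the commutative polynomial Hopf algebra $\mathcal{O}(\mathbb{G}_a^m)\cong U(\mathfrak{a}_{\rm{u}})^*$ graded-dually, and $\mathfrak{a}_{\rm{u}}$ is abelian. By Corollary \ref{decompabelain} (applied with $A_{\rm{r}}$ trivial, so $J=J_{\rm u}$), the assignment $r\mapsto[\exp(r/2)]$ gives a well-defined bijection $\wedge^2\mathfrak{a}_{\rm{u}}\to H_{\text{inv}}^2(A_{\rm{u}})$; I would check it is a homomorphism by noting that since $\mathfrak{a}_{\rm{u}}$ is abelian, all the relevant elements $\exp(r/2)$ lie in a commutative subgroup of $(\mathcal{O}(A_{\rm{u}})\otimes\mathcal{O}(A_{\rm{u}}))^*$ and $\exp(r/2)*\exp(s/2)=\exp((r+s)/2)$, so additivity of $r$ matches multiplicativity of the cocycle. (Alternatively, invoke Example \ref{sweedler} dually: $H^2_{\text{Sw}}$ of the cocommutative $U(\mathfrak{a}_{\rm u})$ with the roles of algebra/coalgebra swapped is $\wedge^2\mathfrak{a}_{\rm u}$ since $\mathfrak{a}_{\rm u}$ is abelian and $H^2(\mathfrak{a}_{\rm u},k)=\wedge^2\mathfrak{a}_{\rm u}^*$ in the abelian case; but the direct Hochschild-cohomology argument from Corollary \ref{decompabelain} is cleaner.) For minimality, the radical of $R=J_{21}^{-1}*J=\exp(-r)$ viewed through the Hopf maps $R_\pm$ is the annihilator of $r$ regarded as a bilinear form $\mathfrak{a}_{\rm{u}}^*\times\mathfrak{a}_{\rm{u}}^*\to k$ (equivalently, $r\in\wedge^2\mathfrak{a}_{\rm u}=\wedge^2(\mathfrak{a}_{\rm u}^*)^*$ is non-degenerate as a form on the dual), so non-degeneracy of $r$ is equivalent to minimality; since a non-degenerate alternating form exists on an $m$-dimensional space iff $m$ is even, the last sentence follows.

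For part (3), use $\mathcal{O}(A)\cong\mathcal{O}(A_{\rm{r}})\otimes\mathcal{O}(A_{\rm{u}})$ and Corollary \ref{decompabelain}: every invariant Hopf $2$-cocycle factors as $J=J_{\rm r}*J_{\rm u}=J_{\rm u}*J_{\rm r}$ with $J_{\rm r}$ lifted from $\mathcal{O}(A_{\rm{r}})$ and $[J_{\rm u}]=[\exp(r/2)]$ for a unique $r\in\mathfrak{a}\wedge\mathfrak{a}_{\rm{u}}$. This gives a surjective homomorphism $H_{\text{inv}}^2(A_{\rm{r}})\times(\mathfrak{a}\wedge\mathfrak{a}_{\rm{u}})\to H_{\text{inv}}^2(A)$; it is well-defined as a homomorphism because $J_{\rm r}$ and $J_{\rm u}$ commute (the $\mathcal{O}(A_{\rm r})$ and $\mathcal{O}(A_{\rm u})$ tensor factors commute, and one checks $[\exp(r/2)]$ for varying $r\in\mathfrak a\wedge\mathfrak a_{\rm u}$ multiply additively as in part (2)), and injective because the reductive part $J_{\rm r}=\text{lif}(\text{res}(J))$ and the class $[J_{\rm u}]$ are recovered canonically from $[J]$. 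Combining with parts (1) and (2) and $\mathfrak{a}\wedge\mathfrak{a}_{\rm{u}}=\mathfrak{a}_{\rm{r}}\otimes\mathfrak{a}_{\rm{u}}\oplus\wedge^2\mathfrak{a}_{\rm{u}}$ yields the stated formula.

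The main obstacle I anticipate is verifying that the radical-of-$R$ computations in the minimality statements are exactly the non-degeneracy conditions claimed — i.e., correctly identifying, via the Hopf maps $R_\pm:\mathcal{O}(A)_{min}\hookrightarrow\mathcal{O}(A)^*_{\rm fin}$ of Subsection \ref{cothopalg}, the right radical $I$ of $R$ with the kernel of the bicharacter (in the diagonalizable case) or of the bilinear form $r$ (in the unipotent case), and checking $\mathcal{O}(A)/I$ is the expected minimal quotient. The homomorphism and uniqueness claims, while needing care, are essentially bookkeeping given Corollary \ref{decompabelain}, Example \ref{sweedler}, and the HKR isomorphism $HH^i(\mathcal{O}(A),k)\cong\wedge^i\mathfrak{a}$ already invoked there.
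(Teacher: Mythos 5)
Your proposal is correct and takes essentially the same route as the paper, whose proof simply cites \cite[Theorem 5.3 and Proposition 5.4]{G} for parts (1) and (2) and deduces (3) from (1) and Corollary \ref{decompabelain}; you are just filling in those citations with the standard facts (Sweedler cohomology of the cocommutative Hopf algebras $k[\widehat{A_{\rm r}}]$ and $\mathcal{O}(A_{\rm u})\cong U(\mathfrak{a}_{\rm u}^*)$, divisibility of $k^{\times}$ to pass to alternating bicharacters, and the exponential/Hochschild--Kostant--Rosenberg description of unipotent cocycles from Corollary \ref{decompabelain}). Two harmless slips to fix: in $k[\widehat{A_{\rm r}}]$ a grouplike $\chi$ itself never lies in the radical of $R$ (since $R(1,\chi)=1$); rather $\chi-1$ lies in it exactly when $\chi$ is in the kernel of the bicharacter $b$, so the radical is the kernel of $k[\widehat{A_{\rm r}}]\to k[\widehat{A_{\rm r}}/\ker b]$ and minimality is still equivalent to non-degeneracy of $b$; and for $J=\exp(r/2)$ one has $R^J=J_{21}^{-1}*J=\exp(r)$ rather than $\exp(-r)$, which of course does not affect the non-degeneracy criterion.
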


\begin{proof}
(1) and (2) follow from the definitions in a straightforward manner (see \cite[Theorem 5.3 \& Proposition 5.4]{G}). 

(3) follows from (1) and Corollary \ref{decompabelain}.
\end{proof}

\begin{remark}\label{bichar}
The group $H_{\text{inv}}^2(A)$ plays the role of the group of skew-symmetric bicharacters of $A$ when $A_{\rm u}$ is non-trivial.
\end{remark}

\begin{remark}\label{abct}
The group isomorphism $$H^2(\widehat{A_{\rm{r}}},k^{\times})\cong\Hom(\wedge^2 \widehat{A_{\rm{r}}},k^{\times})$$ is given by $J\mapsto R^J:=J_{21}^{-1}J$.
\end{remark}

\begin{remark}
The dimension of an algebraic torus having a minimal Hopf $2$-cocycle need not be even (see \cite[Remark 4.2]{G}). 

Moreover, if $A\cong T\times A_f$ has a minimal Hopf $2$-cocycle then the order of $A_f$ does not have to be a perfect square (contrary to the case of finite groups). For example, let $A=\mathbb{G}_m^2\times \mathbb{Z}/n\mathbb{Z}$, let $q\in k^{\times}$ be a non-root of unity, $\zeta\in k$ a primitive $n$-th root of unity, and let $R$ be the bicharacter of $\widehat{A}$ given by $R((x,y,a),(x',y',a'))=q^{xy'-yx'}\zeta^{xa'-ax'}$. Then $R$ is non-degenerate, hence defines a minimal cotriangular structure on $\mathcal{O}(A)$. It corresponds to a minimal Hopf $2$-cocycle $J$ for $A$ given by $J((x,y,a),(x',y',a'))=q^{xy'}\zeta^{xa'}$. 

This raises a question which finite groups can arise as component groups $H/H^0$ of affine algebraic groups $H$ such that $\mathcal{O}(H)$ has a minimal Hopf $2$-cocycle. Clearly, this can be the product of any group $\Gamma$ of central type with any finite abelian group, by taking $H$ to be the product of $\Gamma$ with a number of copies of the example above for various values of $n$. 
\end{remark}

\begin{example}
Let $A_{\rm{r}}:=\mathbb{G}_m$ and $A_{\rm{u}}:=\mathbb{G}_a$, with Lie algebra bases $x$ and $y$, respectively. Then $r:=x\wedge y$ defines a minimal Hopf $2$-cocycle $\exp(r/2)$ for $A_{\rm{r}}\times A_{\rm{u}}$.
\end{example} 

\begin{example} 
Let $A_{\rm{r}}:=\mathbb{G}_m^2$ and $A_{\rm{u}}:=\mathbb{G}_a$, with Lie algebra bases $\{x_1,x_2\}$ and $y$. Then for every irrational $a$, $r_a:=(x_1+ax_2)\wedge y$ defines a minimal Hopf $2$-cocycle $\exp(r_a/2)$ for $A_{\rm{r}}\times A_{\rm{u}}$, with a $3$-dimensional support. (See \cite[Example 4.13]{G}.)
\end{example}

\begin{remark}\label{rmkbij}
Since a commutative affine algebraic group $A$ over $k$ has only $1$-dimensional irreducible representations, all fiber functors on $\Rep_k(A)=\Corep_k(\mathcal{O}(A))$ preserve dimensions, so all of them arise from Hopf $2$-cocycles for $A$.
\end{remark}

Let us now consider commutative affine algebraic groups over a field $k$ of characteristic $p>0$.

\begin{proposition}\label{h2cfinabgposchar}

\begin{enumerate}
\item
If $A$ is a finite $p$-group then $H_{\text{inv}}^2(A)$
is the trivial group.

\item
If $A$ is a vector group, we have a vector space isomorphism
$H_{\text{inv}}^2(A)\cong\wedge^2 \mathfrak{a}$, where $\mathfrak{a}$ is the Lie algebra of $A$.
\end{enumerate}
\end{proposition}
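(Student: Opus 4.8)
The plan is to treat parts (1) and (2) by different routes.

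\textit{Part (2).} Since $A=\mathbb{G}_a^m$, the Hopf algebra $\mathcal{O}(A)=k[x_1,\dots ,x_m]$ is polynomial with the $x_i$ primitive, hence coincides with the universal enveloping algebra $U(\mathfrak{b})$ of the abelian Lie algebra $\mathfrak{b}:=\mathrm{span}_k(x_1,\dots ,x_m)\cong \mathfrak{m}_e/\mathfrak{m}_e^2=\mathfrak{a}^*$. As $U(\mathfrak{b})$ is cocommutative, all its Hopf $2$-cocycles are invariant, and by Example~\ref{sweedler} (valid in arbitrary characteristic) $H^2_{\mathrm{inv}}(A)=H^2_{\mathrm{inv}}(U(\mathfrak{b}))\cong H^2(\mathfrak{b},k)$, the Chevalley--Eilenberg cohomology with trivial coefficients. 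Since $\mathfrak{b}$ is abelian the Chevalley--Eilenberg differential vanishes, so $H^2(\mathfrak{b},k)=(\wedge^2\mathfrak{b})^*\cong \wedge^2\mathfrak{a}$, proving the claim. (This also re-proves Theorem~\ref{h2cfinabg}(2), the cohomological identification replacing the map $r\mapsto [\exp(r/2)]$, which is unavailable in positive characteristic.)

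\textit{Part (1).} Now $A$ is a finite \emph{commutative} $p$-group, so $\mathcal{O}(A)$ is finite dimensional, commutative and cocommutative; in particular every Hopf $2$-cocycle $J$ for $\mathcal{O}(A)$ is invariant, and $(\mathcal{O}(A)^J,R^J)=(\mathcal{O}(A),J_{21}^{-1}*J)$ is cotriangular (Subsection~\ref{cothopalg}). Let $(\mathcal{O}(B),R^J)$ be its minimal quotient (Proposition~\ref{minimal}), so that $B\le A$, the support of $J$, is again a finite commutative $p$-group. The first step is to show $B=1$. In the minimal case $R^J$ yields an injective Hopf algebra map $R^J_+:\mathcal{O}(B)\hookrightarrow \mathcal{O}(B)^*_{\mathrm{fin}}=\mathcal{O}(B)^*$ (the finite dual being the full dual, as $B$ is finite), which is an isomorphism since $\dim \mathcal{O}(B)=|B|=\dim\mathcal{O}(B)^*$; passing to spectra, $B$ is isomorphic to its Cartier dual $B^\vee=\Spec\mathcal{O}(B)^*$. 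But $B$, being a closed subgroup of the étale group $A$, is finite étale, whereas $\mathcal{O}(B)^*$ is the group algebra of a $p$-group over a field of characteristic $p$, hence a local algebra, so $B^\vee$ is infinitesimal; an étale group scheme can be isomorphic to an infinitesimal one only if both are trivial. Hence $B=1$.

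Consequently the minimal quotient of $(\mathcal{O}(A),R^J)$ is $(k,\varepsilon\otimes\varepsilon)$, which forces $R^J=\varepsilon\otimes\varepsilon$, i.e.\ $J_{21}=J$. The second step is to conclude that such a $J$ is a coboundary, and here is where I expect the main difficulty. One way: $J_{21}=J$ says that the fiber functor $F_J$ on $\Corep_k(\mathcal{O}(A))=\Rep_k(A)$ respects the symmetric braiding, so by Tannakian reconstruction $F_J$ is given by an $A$-torsor over $\Spec k$, which is trivial since $k$ is algebraically closed and $A$ is affine; thus $F_J$ is isomorphic to the standard fiber functor, whence $J=(\varepsilon\otimes\varepsilon)^x=\partial(x)$ for some $x\in (\mathcal{O}(A)^*)^\times$, and $x$ is central because $\mathcal{O}(A)^*$ is commutative, so $[J]=0$. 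Alternatively one can appeal to the classification of Hopf $2$-cocycles for commutative affine algebraic groups from \cite{G} (cf.\ also Theorem~\ref{inv}). Either way $H^2_{\mathrm{inv}}(A)$ is trivial, and the genuinely new ingredient in characteristic $p$ is the Cartier-duality obstruction that kills the support.
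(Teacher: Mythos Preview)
Your Part (2) is exactly the paper's proof: identify $\mathcal{O}(A)$ with $U(\mathfrak{a}^*)$ and invoke Example~\ref{sweedler}. Nothing to add there.

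For Part (1) the paper simply cites \cite[Corollary 6.9]{G2}, so your argument is genuinely different, and in fact supplies a self-contained proof where the paper gives none. Your two-step strategy is sound. The first step---forcing the support $B$ to be trivial via the Hopf isomorphism $R^J_+:\mathcal{O}(B)\xrightarrow{\sim}\mathcal{O}(B)^*$ and then observing that $\mathcal{O}(B)=k^B$ is reduced while $\mathcal{O}(B)^*=k[B]$ is local (augmentation ideal nilpotent for a $p$-group in characteristic $p$)---is clean and correct; indeed the \emph{algebra} isomorphism alone already yields $|B|=1$, so you do not even need the full Cartier-dual interpretation. The second step, deducing that a symmetric $J$ is a coboundary via the torsor description of symmetric fiber functors on the Tannakian category $\Rep_k(A)$, is also correct and works uniformly in positive characteristic since $A$ is a finite constant (hence smooth affine) group scheme over an algebraically closed field; the centrality of the gauging element $x$ is automatic because $\mathcal{O}(A)^*=k[A]$ is commutative. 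One small wording point: your parenthetical ``the support of $J$'' for $B$ is justified here because $\mathcal{O}(A)^*$ is commutative, so gauge transformations do not move $R^J$ and the minimal quotient really does compute $\Supp(J)$; in general these notions require the argument of Theorem~\ref{support} to be identified.

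What your approach buys over the citation is transparency: the reader sees precisely which feature of characteristic $p$ is responsible (no nontrivial self-Cartier-dual étale $p$-group), rather than having to unpack a reference. The trade-off is length; the paper's one-line citation is of course shorter.
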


\begin{proof}
(1) Follows from \cite[Corollary 6.9]{G2}. 

(2) Since in this case $\mathcal{O}(A)\cong U(\mathfrak{a}^*)$, we have $$H_{\text{inv}}^2(A)=H_{\text{inv}}^2(U(\mathfrak{a}^*))=H^2(\mathfrak{a}^*,k)=\wedge^2 \mathfrak{a}$$ (see Example \ref{sweedler}).
\end{proof}

\section{The support of an invariant Hopf $2$-cocycle for an affine algebraic group}\label{suppinvariant}

Let $G$ be an affine algebraic group over $k$ and let $\mathcal{O}(G)$ be its function algebra. Then $(\mathcal{O}(G),\varepsilon\otimes \varepsilon)$ is a cotriangular commutative Hopf algebra.

\begin{theorem}\label{support} 
Let $J$ be a Hopf $2$-cocycle for $G$. 
Then there exist a closed subgroup $H$ of $G$ (called the {\em support} of $J$), determined uniquely up to conjugation, and a minimal Hopf $2$-cocycle $\bar{J}$ for $H$ such that $J$ is gauge equivalent to $\bar{J}$ (viewed as a Hopf $2$-cocycle for $G$).
\end{theorem}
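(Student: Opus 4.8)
The plan is to extract the support group from the minimal cotriangular quotient of the twisted Hopf algebra $\mathcal{O}(G)^J$. Let $R:=J_{21}^{-1}*J$, so that $(\mathcal{O}(G)^J,R)$ is a cotriangular Hopf algebra; its radical $I$ is a Hopf ideal by Proposition \ref{minimal}, giving the minimal quotient $(\mathcal{O}(G)^J_{\min},R)$. As a coalgebra $\mathcal{O}(G)^J=\mathcal{O}(G)$, so $I$ is a coideal of $\mathcal{O}(G)$ as well; but a priori $I$ is only an ideal for the twisted product $m^J$, not for $m$. The first key point is that, because $J$ is a $2$-cocycle, the ideal $I$ for $m^J$ is in fact also an ideal for the original multiplication $m$ (the subspace $I$ is a ``Hopf ideal up to the cocycle twist'', and one checks directly from Eq. (\ref{2coc}) that $m(I\otimes \mathcal{O}(G))\subseteq I$ and likewise on the other side, since the twist involves only convolution by $J^{\pm 1}$ which preserves $I$ by definition of the radical). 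Hence $\mathcal{O}(G)/I$ is a quotient Hopf algebra of $\mathcal{O}(G)$ in the usual sense, commutative and finitely generated, so $\mathcal{O}(G)/I=\mathcal{O}(H)$ for a closed subgroup $H\subseteq G$.

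Next I would transport $J$ down to $H$. Since $J\in(\mathcal{O}(G)\otimes\mathcal{O}(G))^*$ and $I$ is in the radical of $R$, the induced form descends: I claim $J$ is cohomologous (via a gauge transformation by some $x\in(\mathcal{O}(G)^*)^\times$) to a cocycle $\bar J$ that is inflated from $\mathcal{O}(H)=\mathcal{O}(G)/I$. The mechanism is the standard one for minimal quasitriangular/cotriangular structures: the maps $R_\pm:\mathcal{O}(G)^J_{\min}\hookrightarrow(\mathcal{O}(G)^J_{\min})^*_{\rm fin}$ from Subsection \ref{cothopalg} realize $(\mathcal{O}(H),R)$ as a minimal object, and the projection $\mathcal{O}(G)\twoheadrightarrow\mathcal{O}(H)$ dualizes to an inclusion of Hopf algebras whose image generates a minimal cotriangular sub-object; pulling $R$ back and comparing with $\varepsilon\otimes\varepsilon$ produces the gauge parameter $x$. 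Concretely, one writes $J=\bar J * \partial$ where $\bar J=\mathrm{lif}(\bar J_H)$ for a minimal cocycle $\bar J_H$ on $H$ and $\partial$ is the coboundary of a suitable $x$; minimality of $\bar J_H$ is exactly the statement that $R$ is non-degenerate on $\mathcal{O}(H)$, which holds because we quotiented by the full radical $I$.

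For uniqueness up to conjugation: if $J$ is also gauge equivalent to a minimal cocycle $\bar J'$ on a closed subgroup $H'$, then $(\mathcal{O}(G)^J)_{\min}$ is canonically $\mathcal{O}(H)$ and also $\mathcal{O}(H')$ as minimal cotriangular quotients; since the minimal quotient of a cotriangular Hopf algebra is unique (Proposition \ref{minimal} identifies $I$ intrinsically as the radical), there is a Hopf algebra isomorphism $\mathcal{O}(H)\cong\mathcal{O}(H')$ compatible with the two projections from $\mathcal{O}(G)$, i.e. an element of $G$ conjugating $H$ to $H'$ — here one uses that two closed subgroup inclusions $H,H'\hookrightarrow G$ inducing the same quotient coalgebra differ by an inner automorphism of $G$, which in turn follows because the gauge transformation relating $\bar J$ and $\bar J'$ restricts to an automorphism of the finite-dimensional-comodule category that moves one fiber functor to the other, and such ambiguity is measured by $\CoInn$.

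The main obstacle I expect is the descent step in the second paragraph: showing that the gauge class of $J$ actually contains a cocycle literally inflated from $\mathcal{O}(H)$, rather than merely one whose associated $R$-matrix has radical $I$. This requires producing the gauge parameter $x\in(\mathcal{O}(G)^*)^\times$ explicitly, and in the proalgebraic (infinite-dimensional) setting one must be careful that $x$ exists as an honest invertible functional and that the relevant convolution inverses converge; the cited \cite{G} (whose Theorem \ref{support} is proved there, per the proof line) presumably handles exactly this by a limiting argument over finite-dimensional sub-coalgebras, reducing to the finite-dimensional minimal-triangular case where the result is classical.
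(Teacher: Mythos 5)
There is a genuine gap, and it sits exactly at the step you present as a ``direct check.'' By Proposition \ref{minimal}, the radical $I$ of $R^J:=J_{21}^{-1}*J$ is a Hopf ideal of the \emph{twisted} Hopf algebra $\mathcal{O}(G)^J$: the cotriangular axioms only give $R^J(h,\,g\cdot_J l)=\sum R^J(h_1,g)R^J(h_2,l)$ for the twisted product $m^J$, so what follows is $m^J(I\otimes\mathcal{O}(G))\subseteq I$, not $m(I\otimes\mathcal{O}(G))\subseteq I$; Eq.\ (\ref{2coc}) does not rescue this, because in $m=J*m^J*J^{-1}$ the outer convolution factors are scalars and cannot force membership in $I$. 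In fact the claim is false for a generic representative of the gauge class: replacing $J$ by $J^x$ replaces $R^J(a,b)$ by $R^J$ evaluated on the images of $a,b$ under the invertible operator $a\mapsto\sum x(a_1)a_2x^{-1}(a_3)$, so the radical is moved by (the inverse of) this operator. Already for a finite group $G$, where the ideals of $\mathcal{O}(G)$ are exactly the spaces of functions vanishing on a subset, take $J$ lifted from a nondegenerate cocycle on a non-normal abelian subgroup $A$ and then gauge by a generic $x\in k[G]^{\times}$: the radical of $R^{J^x}$ is no longer spanned by delta functions and is not an $m$-ideal. Thus ``$\mathcal{O}(G)/I=\mathcal{O}(H)$ for a closed subgroup $H$'' only holds after $J$ has been gauged to a suitable representative, and producing that gauge is precisely the nontrivial content of the existence statement; the paper does not reprove it but quotes \cite[Theorem 3.1]{G}, while your second paragraph concedes the descent step is the obstacle and defers it to the same reference, so your argument does not establish existence.

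The uniqueness part is also not carried through. Since the minimal quotients attached to the two gauge representatives are cut out by radicals that differ by the conjugation operator above, they are not ``canonically the same'' quotient of $\mathcal{O}(G)$; one only gets an isomorphism $\mathcal{O}(H)\cong\mathcal{O}(\bar H)$ intertwined by $x$, and the whole point is to show $x$ may be replaced by a grouplike element. The paper does this concretely: from $L=J^x$ one gets $R^L=(x\otimes x)*R^J*(x^{-1}\otimes x^{-1})$, minimality forces $(x\otimes x)*(\mathcal{O}(H)\otimes\mathcal{O}(H))^**(x^{-1}\otimes x^{-1})=(\mathcal{O}(\bar H)\otimes\mathcal{O}(\bar H))^*$, hence $(x\circ m)*(x^{-1}\otimes x^{-1})$ lies in $(\mathcal{O}(\bar H)\otimes\mathcal{O}(\bar H))^*$ and is a \emph{symmetric} Hopf $2$-cocycle for $\bar H$, hence equals $(y\circ m)*(y^{-1}\otimes y^{-1})$ for some $y\in(\mathcal{O}(\bar H)^*)^{\times}$; then $x*y^{-1}$ is grouplike, i.e., an element $g\in G$, and $gHg^{-1}=\bar H$. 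Your closing appeal to ``two closed subgroups inducing the same quotient coalgebra differ by an inner automorphism'' and to $\CoInn$ never produces this group element, so uniqueness up to conjugation is asserted rather than proved.
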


\begin{proof}
For the existence of $H$ and $\bar{J}$, see \cite[Theorem 3.1]{G}.

Now let $H,\bar{H}\subseteq G$ be two closed normal subgroups of $G$, and let $J\in (\mathcal{O}(H)\otimes \mathcal{O}(H))^*$ and $L\in (\mathcal{O}(\bar{H})\otimes \mathcal{O}(\bar{H}))^*$ be two minimal Hopf $2$-cocycles for $H$ and $\bar{H}$, respectively. Suppose $J,L$ are gauge equivalent as Hopf $2$-cocycles for $G$. Then our job is to show that $H,\bar{H}$ are conjugate in $G$. 

By definition, $L=J^x=(x\circ m)*J*(x^{-1}\otimes x^{-1})$ for some element $x\in (\mathcal{O}(G)^*)^{\times}$. Hence, we have
$R^L=(x\otimes x)*R^J*(x^{-1}\otimes x^{-1})$. Then by the minimality of $R^L$ and $R^J$, we have 
\begin{equation}\label{neweq}
(x\otimes x)*(\mathcal{O}(H)\otimes \mathcal{O}(H))^* *(x^{-1}\otimes x^{-1})=(\mathcal{O}(\bar{H})\otimes \mathcal{O}(\bar{H}))^*
\end{equation}
inside the algebra $(\mathcal{O}(G)\otimes \mathcal{O}(G))^*$. It follows that
$$
(x\circ m)*(x^{-1}\otimes x^{-1})=L*(x\otimes x)*J^{-1}*(x^{-1}\otimes x^{-1})\in(\mathcal{O}(\bar{H})\otimes \mathcal{O}(\bar{H}))^*$$
is a symmetric Hopf $2$-cocycle for $\bar{H}$. Therefore, $$(x\circ m)*(x^{-1}\otimes x^{-1})=(y\circ m)*(y^{-1}\otimes y^{-1})$$ for some $y\in\mathcal{O}((\bar{H})^*)^{\times}$. Equivalently, $x*y^{-1}\in\mathcal{O}(G)^*$ is a grouplike element, so $x*y^{-1}=g$ for some $g\in G$. Hence, we have
$$(g\otimes g)*(\mathcal{O}(H)\otimes \mathcal{O}(H))^* *(g^{-1}\otimes g^{-1})=(\mathcal{O}(\bar{H})\otimes \mathcal{O}(\bar{H}))^*$$ inside $(\mathcal{O}(G)\otimes \mathcal{O}(G))^*$, which is equivalent to $gHg^{-1}=\bar{H}$.
\end{proof}

We will denote the conjugacy class of $H$ by $\Supp(J)$, and sometimes write $H=\Supp(J)$ by abuse of notation.

The following result was proved for finite groups in \cite[Theorem 2.4 ]{Da2} (see also \cite[Lemma 4.4(a)]{GK}).

\begin{theorem}\label{inv} 
Let $J\in Z_{\text{inv}}^2(G)$ be an invariant Hopf $2$-cocycle with support $A:=\Supp(J)$. Then $A$ is a closed normal commutative subgroup of $G$.
\end{theorem}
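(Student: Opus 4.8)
The plan is to show that the support $A$ of an invariant Hopf $2$-cocycle is (a) commutative and (b) normal in $G$, using Theorem \ref{support} together with the extra rigidity coming from invariance.

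First I would establish commutativity. By Theorem \ref{support}, after a gauge transformation $J$ becomes a minimal Hopf $2$-cocycle $\bar J$ for the closed subgroup $A\subseteq G$, and $(\mathcal{O}(A)^{\bar J}, \bar J_{21}^{-1}*\bar J)$ is a minimal cotriangular Hopf algebra, which we may call $(\mathcal{O}(A)_{\min}, R)$. The key point is that invariance of $J$ means $J$ defines a tensor structure on the \emph{identity} functor of $\Corep_k(\mathcal{O}(G))$, so $\mathcal{O}(G)^J = \mathcal{O}(G)$ as Hopf algebras; hence $\mathcal{O}(A)^{\bar J}$ is a Hopf algebra quotient (up to isomorphism, via the gauge element) of the commutative Hopf algebra $\mathcal{O}(G)$, and in particular $\mathcal{O}(A)^{\bar J} = \mathcal{O}(A)$ is commutative. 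A minimal cotriangular commutative Hopf algebra is cocommutative (the non-degenerate form $R$ gives injective Hopf maps $R_\pm:\mathcal{O}(A)\hookrightarrow \mathcal{O}(A)^*_{\mathrm{fin}}$, and commutativity of $\mathcal{O}(A)$ forces cocommutativity of $\mathcal{O}(A)^*_{\mathrm{fin}}$, hence of $\mathcal{O}(A)$). But $\mathcal{O}(A)$ commutative and cocommutative means $A$ is commutative (in characteristic $0$ this is immediate; more care is needed in positive characteristic, but the argument of \cite{G} handles it). So $A$ is a closed commutative subgroup of $G$.

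Next I would prove normality. Let $g\in G$. Conjugation by $g$ is a Hopf algebra automorphism $c_g$ of $\mathcal{O}(G)$, and since $J$ is invariant it defines an autoequivalence of $\Corep_k(\mathcal{O}(G))$ acting trivially on the underlying abelian category; I claim $J$ is fixed (up to strong gauge equivalence, or even on the nose) by $c_g$. Concretely, $c_g = \ad(g)$ where $g\in G\subseteq \mathcal{O}(G)^*$ is the corresponding grouplike element, and for any invariant $J$ one has $J^g = (g\circ m)*J*(g^{-1}\otimes g^{-1}) = J*(g\circ m)*(g^{-1}\otimes g^{-1}) = J$, exactly as in the proof of Proposition \ref{trivial} for coinner automorphisms. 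Thus $(c_g\otimes c_g)(J) = J^{g^{-1}}$ differs from $J$ only by a trivial coboundary, i.e.\ $J$ and its $g$-conjugate are gauge equivalent as Hopf $2$-cocycles for $G$. By the uniqueness part of Theorem \ref{support}, their supports coincide as conjugacy classes; but the support of $(c_g\otimes c_g)(J)$ is $gAg^{-1}$ while the support of $J$ is $A$. Hence $gAg^{-1}$ and $A$ are conjugate in $G$. To upgrade this to $gAg^{-1}=A$, I would note that $\Supp(J)$ is a \emph{specific} subgroup (not just a conjugacy class) once we fix the gauge-equivalence data; more robustly, since $c_g$ is an automorphism of $\mathcal{O}(G)$ fixing $J$ up to coboundary, the minimal quotient construction is natural, so $c_g$ must carry the canonical Hopf ideal $I\subseteq \mathcal{O}(G)$ (the kernel of $\mathcal{O}(G)\to\mathcal{O}(A)_{\min}$, equivalently the defining ideal of $A$) to itself, giving $gAg^{-1}=A$ directly.

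The main obstacle is the last step: converting ``$gAg^{-1}$ is conjugate to $A$ for every $g$'' into ``$gAg^{-1}=A$ for every $g$,'' which is genuinely what requires the invariance hypothesis (for a non-invariant $J$ one only gets a conjugacy class). The clean way is to show that the Hopf ideal $I$ defining $\Supp(J)$ is canonically determined by $J$ itself (not merely up to gauge), via the minimal quotient of the cotriangular Hopf algebra $(\mathcal{O}(G),\,\varepsilon\otimes\varepsilon)$ twisted appropriately; then any Hopf automorphism of $\mathcal{O}(G)$ preserving $J$ up to the action of central units preserves $I$. I expect this to follow by carefully tracking the functoriality in \cite[Theorem 3.1]{G} of the passage from a cotriangular Hopf algebra to its minimal quotient, combined with Proposition \ref{minimal} (the radical of $R$ is a two-sided Hopf ideal intrinsic to the pair $(H,R)$). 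Once that naturality is in hand, normality is immediate and the proof concludes.
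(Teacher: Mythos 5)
Your proposal is essentially the paper's own proof: commutativity is obtained the same way (invariance keeps $\mathcal{O}(G)^J=\mathcal{O}(G)$ commutative, so the minimal cotriangular quotient embeds via $R_{\pm}$ into its cocommutative finite dual and hence $\mathcal{O}(A)$ is cocommutative), and normality rests on the same computation that conjugation by a grouplike $g\in G$ fixes $J$, hence fixes $R^J:=J_{21}^{-1}*J$. The canonicity point you defer is resolved exactly as you anticipate, with no further functoriality chase in \cite{G} needed: because $J$ is invariant, $(\mathcal{O}(G),R^J)$ is itself a cotriangular Hopf algebra whose radical --- a Hopf ideal intrinsic to the pair by Proposition \ref{minimal} --- is the defining ideal of $A$, so the equality $(J^g)_{21}^{-1}*J^g=J_{21}^{-1}*J$ (the identity the paper verifies, and immediate from your $J^g=J$) shows conjugation preserves this ideal and $gAg^{-1}=A$.
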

Note that since $A$ is normal, it is well defined as a closed subgroup of $G$, not just up to conjugation.

\begin{proof}
Since $J$ is invariant, it follows that $(\mathcal{O}(G),J_{21}^{-1}*J)$ is a commutative cotriangular Hopf algebra and $(\mathcal{O}(A),J_{21}^{-1}*J)$ is a {\em minimal} commutative cotriangular Hopf algebra. Hence, $\mathcal{O}(A)$ is isomorphic to a Hopf subalgebra of its finite dual Hopf algebra $\mathcal{O}(A)^*_{\rm fin}$ (see Subsection \ref{cothopalg}). Since $\mathcal{O}(A)^*_{\rm fin}$ is {\em cocommutative}, it follows that $\mathcal{O}(A)$ is cocommutative, which is equivalent to $A$ being commutative.

Take arbitrary $g\in G$. By Lemma \ref{trivialgauge}, $J^g$ is also invariant, thus $(\mathcal{O}(G),(J^g)_{21}^{-1}*J^g)$ is a commutative cotriangular Hopf algebra too.  
Now since $J$ is invariant, we have 
\begin{eqnarray*}
\lefteqn{(J^g)_{21}^{-1}*J^g}\\
& = & \left((g\otimes g)*J_{21}^{-1}*(g^{-1}\circ m)\right )*\left((g\circ m)*J*(g^{-1}\otimes g^{-1})\right )\\
& = & (g\otimes g)*J_{21}^{-1}*J*(g^{-1}\otimes g^{-1})\\
& = & J_{21}^{-1}*J,
\end{eqnarray*}
which implies that $gAg^{-1}=A$, as desired. 
\end{proof}

\begin{corollary}\label{idcompreductive}
The connected component of the identity of the support of an invariant Hopf $2$-cocycle $J$ for a reductive algebraic group $G$ over $k$ is a torus. In particular, 
if $G$ is semisimple then the support of $J$ is a finite group. \qed
\end{corollary}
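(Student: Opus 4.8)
The plan is to deduce Corollary \ref{idcompreductive} directly from Theorem \ref{inv}. By that theorem, the support $A$ of an invariant Hopf $2$-cocycle $J$ for the reductive group $G$ is a \emph{closed normal commutative} subgroup of $G$. Let $A^0$ denote its connected component of the identity; then $A^0$ is a closed connected commutative normal subgroup of $G$.

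First I would use the structure theory of connected commutative affine algebraic groups over $k$ (characteristic $0$): $A^0\cong (A^0)_{\rm r}\times (A^0)_{\rm u}$, where $(A^0)_{\rm r}$ is a torus and $(A^0)_{\rm u}$ is a vector group (the unipotent radical of $A^0$). Since $A^0$ is normal in the reductive group $G$, its unipotent radical $(A^0)_{\rm u}$ is a normal unipotent subgroup of $G$, hence contained in the unipotent radical of $G$; but $G$ is reductive, so $(A^0)_{\rm u}$ is trivial. Therefore $A^0=(A^0)_{\rm r}$ is a torus, which is the first assertion.

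For the ``in particular'' statement, suppose $G$ is semisimple. Then $A^0$ is a torus which is normal in $G$; since $G$ is connected, conjugation gives a trivial action of $G$ on the (discrete) character lattice $\widehat{A^0}$ via a morphism $G\to \Aut(\widehat{A^0})\cong \GL_n(\mathbb Z)$ with finite (hence trivial, as $G$ is connected) image, so $A^0$ is central in $G$. But a semisimple group has finite center, forcing $A^0$ to be finite, hence trivial, so $A$ is a finite group.

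Since the argument only invokes Theorem \ref{inv} together with standard facts about reductive and semisimple groups, I do not expect a serious obstacle; the only point requiring mild care is the claim that a connected normal subgroup of a connected group on which conjugation acts by inner automorphisms of a torus must be central, which follows from rigidity of tori (the conjugation action on a torus is through a discrete, hence in the connected case trivial, automorphism group). Everything here is over a field of characteristic $0$, so the decomposition $A^0\cong (A^0)_{\rm r}\times(A^0)_{\rm u}$ and the triviality of unipotent normal subgroups in a reductive group are available without complication.
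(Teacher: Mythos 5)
Your argument is correct and is exactly the reasoning the paper leaves implicit (the corollary is stated as an immediate consequence of Theorem \ref{inv}): the identity component of the commutative normal support is (torus)$\times$(unipotent), its unipotent factor is a connected normal unipotent subgroup of the reductive group $G$ and hence trivial, and in the semisimple case the resulting normal torus is central by rigidity, hence contained in the finite center and therefore trivial. The only cosmetic caveat is that Section 4 of the paper works in arbitrary characteristic, not just characteristic $0$, but the decomposition of a connected commutative affine algebraic group into a torus times a unipotent group that you invoke holds over any algebraically closed field, so your proof goes through unchanged.
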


\section{The set $\mathcal{B}(G)$}
Let $G$ be an affine algebraic group over $k$.
Following \cite{Da2} (see also \cite{GK}), we let $\mathcal{B}(G)$ denote the set of pairs $(A,R)$ such that $A$ is a commutative closed normal subgroup of $G$, $(\mathcal{O}(A),R)$ is a minimal cotriangular Hopf algebra with trivial Drinfeld element, and $R$ is $G$-invariant (see Subsection \ref{cothopalg}). We set $e:=(\{1\},\varepsilon\otimes \varepsilon)$.

By Theorem \ref{inv}, every invariant Hopf $2$-cocycle $J$ for $G$ gives rise to an element $(\Supp(J),R^{J})\in\mathcal{B}(G)$, where $R^{J}:=J_{21}^{-1}*J$.

Set $\Int(G):=\CoInt(\mathcal{O}(G))$, and observe that since $G$ is the group of grouplike elements of $\mathcal{O}(G)^*$, we have $\CoInn(\mathcal{O}(G))=\Inn(G)$. Recall that by Proposition \ref{trivial}, $\Int(G)/\Inn(G)$ is a subgroup of $H_{\text{inv}}^2(G)$. 

Following \cite[Section 6]{Da2} (see also \cite[Lemma 4.4(b) \& Theorem 4.5]{GK}), we have the following result.

\begin{theorem}\label{propoftheta}
The following hold:
\begin{enumerate}
\item 
The assignment 
$$
\Theta:H_{\text{inv}}^2(G)\to \mathcal{B}(G),\,\,\,[J]\mapsto (\Supp(J),R^J),
$$
is a well-defined map of sets.

\item
The fibers of $\Theta$ are the left cosets of $\Int(G)/\Inn(G)$ inside $H_{\text{inv}}^2(G)$. In particular, $\Theta$ is injective if and only if $\Int(G)=\Inn(G)$.

\item
$\Theta^{-1}(e)\cong \Int(G)/\Inn(G)$ as groups.
\end{enumerate}
\end{theorem}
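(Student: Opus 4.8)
The plan is to prove the three parts of Theorem~\ref{propoftheta} in order, leaning on the structural results already established, especially Theorems~\ref{support} and~\ref{inv} and Proposition~\ref{trivial}.

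\textbf{Part (1).} First I would check that $\Theta$ is well-defined on $H^2_{\text{inv}}(G)$, i.e., that it is independent of the choice of representative $J$ in a strong gauge equivalence class. If $\bar J=J^x$ for a \emph{central} invertible $x\in \mathcal{O}(G)^*$, then $R^{\bar J}=(x\ot x)*R^J*(x^{-1}\ot x^{-1})=R^J$ since $x$ is central, and by (the proof of) Theorem~\ref{support} the supports of $J$ and $\bar J$, being the minimal quotients cut out by $R^J=R^{\bar J}$, coincide as closed \emph{normal} subgroups (here normality, guaranteed by Theorem~\ref{inv}, makes $\Supp(J)$ well-defined on the nose, not just up to conjugacy). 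Theorem~\ref{inv} also guarantees that $(\Supp(J),R^J)$ is actually a member of $\mathcal{B}(G)$: $A:=\Supp(J)$ is closed, normal and commutative; $(\mathcal{O}(A),R^J)$ is minimal by the definition of support; its Drinfeld element is trivial because $\mathcal{O}(A)$ is commutative and the Drinfeld element is grouplike of order dividing~$2$ — more precisely, one checks $u=\varepsilon$ directly from $u(h)=\sum R^J(S(h_2),h_1)$ using commutativity together with $R^J_{21}=(R^J)^{-1}$; and $R^J$ is $G$-invariant by the computation in the proof of Theorem~\ref{inv} showing $(J^g)^{-1}_{21}*J^g=J^{-1}_{21}*J$ for all $g\in G$.

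\textbf{Part (2).} The fibers of $\Theta$: by Proposition~\ref{trivial}, $\Int(G)/\Inn(G)=\CoInt(\mathcal{O}(G))/\CoInn(\mathcal{O}(G))$ acts freely on $H^2_{\text{inv}}(G)$ via $\ad(x)\cdot[J]=[J^x]$. I claim two classes $[J],[J']$ lie in the same fiber of $\Theta$ exactly when they lie in the same orbit of this action. One direction is clear: if $[J']=[J^x]$ with $\ad(x)\in\CoInt(\mathcal{O}(G))$, then $R^{J'}=(x\ot x)*R^J*(x^{-1}\ot x^{-1})$, and since $\ad(x)$ is a Hopf automorphism of $\mathcal{O}(G)$ it carries the minimal quotient for $R^J$ to that for $R^{J'}$, so $\Supp(J')=\ad(x)(\Supp(J))$; but a cointernal automorphism of $\mathcal{O}(G)$ fixes every normal subgroup — indeed, being an inner-on-the-dual automorphism it acts trivially on the lattice of normal Hopf subalgebras — hence $\Supp(J')=\Supp(J)$ and moreover $R^{J'}$ and $R^J$ agree after transport, giving $\Theta([J'])=\Theta([J])$. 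Conversely, suppose $\Theta([J])=\Theta([J'])$, so $\Supp(J)=\Supp(J')=:A$ and $R^J=R^{J'}$ on $\mathcal{O}(A)$. Then $J'=J^x$ for some $x\in(\mathcal{O}(G)^*)^\times$ by Theorem~\ref{support} (both are gauge equivalent to the \emph{same} minimal cocycle $\bar J$ on $A$, chosen once and for all), and since $J,J'$ are both invariant, Lemma~\ref{trivialgauge} forces $\ad(x)\in\CoInt(\mathcal{O}(G))$. Thus $[J']=\ad(x)\cdot[J]$ lies in the orbit, and the fibers of $\Theta$ are precisely the orbits, i.e., the left cosets of $\Int(G)/\Inn(G)$ in $H^2_{\text{inv}}(G)$. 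The injectivity criterion is then immediate: $\Theta$ is injective iff the acting group is trivial, i.e., $\Int(G)=\Inn(G)$.

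\textbf{Part (3).} Finally, $\Theta^{-1}(e)$ is the fiber over $e=(\{1\},\varepsilon\ot\varepsilon)$, which by Part~(2) is the coset containing $[\varepsilon\ot\varepsilon]=1$, namely the subgroup $\Int(G)/\Inn(G)$ itself, embedded in $H^2_{\text{inv}}(G)$ via the homomorphism $\ad(x)\mapsto(\varepsilon\ot\varepsilon)^x$ of Proposition~\ref{trivial}. One should just remark that $[J]\in\Theta^{-1}(e)$ means $\Supp(J)=\{1\}$, which by Theorem~\ref{support} says $J$ is gauge equivalent to $\varepsilon\ot\varepsilon$, i.e., $J=(\varepsilon\ot\varepsilon)^x$ for some $x$ with $\ad(x)\in\CoInt(\mathcal{O}(G))$ (using invariance and Lemma~\ref{trivialgauge} again); conversely every such $(\varepsilon\ot\varepsilon)^x$ has trivial support and trivial $R$. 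Hence $\Theta^{-1}(e)=\Int(G)/\Inn(G)$ as groups.

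The main obstacle I anticipate is the argument in Part~(2) that a cointernal automorphism of $\mathcal{O}(G)$ fixes each normal closed subgroup of $G$ — equivalently that it fixes the corresponding normal Hopf subalgebras — and, relatedly, pinning down that the support is genuinely well-defined as an honest subgroup (not merely a conjugacy class) once invariance is imposed; this uses Theorem~\ref{inv} together with a careful bookkeeping of how $\ad(x)$ transports the minimal quotient $(\mathcal{O}(G)_{min},R^J)$. Everything else is a matter of chasing the gauge action through $R^J=J^{-1}_{21}*J$ and invoking Theorem~\ref{support}, Lemma~\ref{trivialgauge} and Proposition~\ref{trivial}.
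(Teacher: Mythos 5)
Your overall route is the paper's: Theorems \ref{support} and \ref{inv} for well-definedness, Lemma \ref{trivialgauge} and Proposition \ref{trivial} for the coset structure. But the crux of part (2) is begged rather than proved. When $\Theta([J])=\Theta([J'])$ you assert that $J$ and $J'$ are ``gauge equivalent to the same minimal cocycle $\bar J$ on $A$, chosen once and for all''. Theorem \ref{support} only provides each of them with \emph{some} minimal cocycle on its support, and the pair $(A,R)$ does not single out a canonical one; moreover the gauge element relating $J$ to its minimal representative conjugates the $R$-form, so equality of the pairs $(A,R^J)=(A,R^{J'})$ does not by itself yield a common representative. The actual content of this step, which is exactly what the paper's proof supplies, is: since $J,J'$ are invariant and $R^J=R^{J'}$, one has $(J'*J^{-1})_{21}=J'_{21}*J_{21}^{-1}=J'*J^{-1}$, so $J'*J^{-1}$ is a \emph{symmetric} (invariant) Hopf $2$-cocycle for the commutative Hopf algebra $\mathcal{O}(G)$; one then uses that such a symmetric cocycle is a coboundary $\partial(x)$ (uniqueness of symmetric fiber functors, the input from \cite{G} already used in Theorem \ref{support}), whence $J'=J^x$, and Lemma \ref{trivialgauge} forces $\ad(x)\in\Int(G)$. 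Without this symmetric-cocycle-is-a-coboundary step your converse direction does not go through.

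Two further points. In part (1), your ``direct check'' that the Drinfeld element of $(\mathcal{O}(A),R^J)$ is trivial is false as stated: commutativity of $\mathcal{O}(A)$ together with $R_{21}=R^{-1}$ only forces $u$ to be a grouplike of order dividing $2$, not $u=\varepsilon$ (e.g.\ $\mathcal{O}(\mathbb{Z}/2\mathbb{Z})$ with $R(g,g)=-1$ is commutative cotriangular with nontrivial $u$). Triviality of $u$ here really uses that $R^J=J_{21}^{-1}*J$ comes from a $2$-cocycle (the cocycle identity kills the obstruction), a fact the paper records in the discussion preceding the theorem rather than inside its proof. In part (2), your inclusion ``orbit $\subseteq$ fiber'' rests on the unproved assertion that a cointernal automorphism fixes every closed normal subgroup and that ``$R^{J'}$ and $R^J$ agree after transport''. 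For the support this assertion is unnecessary: $J$ and $J^x$ are gauge equivalent, so by Theorem \ref{support} their supports are conjugate, and by Theorem \ref{inv} both are normal, hence equal. But for the form the issue is genuine: one only gets $R^{J^x}=R^J\circ(\ad(x)\otimes\ad(x))$, and the equality $R^{J^x}=R^J$ on $\mathcal{O}(A)$ is precisely what needs an argument, which you do not supply (the paper leaves this direction implicit as well; note that part (3) and the ``in particular'' clause only need the identity coset, where $R^{\partial(x)}=\partial(x)_{21}^{-1}*\partial(x)=\varepsilon\otimes\varepsilon$ is immediate from the symmetry of $\partial(x)$).
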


\begin{proof}
(1) If $[J]=[\bar{J}]$ then by Theorem \ref{support} and Theorem \ref{inv}, we have $\Supp(J)=\Supp(\bar{J})$ and $R^J=R^{\bar{J}}$. Thus, $\Theta([J])$ is independent of the choice of a representative from $[J]$.

(2) Suppose $\Theta([J])=\Theta([\bar{J}])$. Then $J,\bar{J}$ are gauge equivalent as Hopf $2$-cocycles for $G$, supported on the same commutative closed normal subgroup $A$ of $G$, and $R^{J}=R^{\bar{J}}$. Since $J,\bar{J}$ are invariant, the equality between the $R$-matrices implies that $\bar{J}J^{-1}$ is a symmetric Hopf $2$-cocycle for $G$, hence $\bar{J}=J^x$ for some element $x\in \mathcal{O}(G)^*$ such that $\ad(x)\in \Int(G)$, which defines a unique element in $\Int(G)/\Inn(G)$.
   
(3) Follows from (2). 
\end{proof}

\begin{corollary}\label{commbg} 
If $G$ is a commutative affine algebraic group then $\Theta$ is bijective and therefore induces a group structure on $\mathcal{B}(G)$.
\end{corollary}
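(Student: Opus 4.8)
The plan is to establish injectivity and surjectivity of $\Theta$ separately, and then transport the group law of $H_{\text{inv}}^2(G)$ to $\mathcal{B}(G)$ along the resulting bijection.

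For injectivity I would use Theorem \ref{propoftheta}(2), which reduces the claim to $\Int(G)=\Inn(G)$. Since $G$ is commutative, $\Inn(G)$ is trivial, so it remains to check $\CoInt(\mathcal{O}(G))=\{\id\}$. Here one uses that $\mathcal{O}(G)$ is cocommutative: for any $x\in(\mathcal{O}(G)^*)^{\times}$, cocommutativity of the coproduct gives
$$\ad(x)(a)=\sum x(a_1)\,a_2\,x^{-1}(a_3)=\sum x(a_1)x^{-1}(a_2)\,a_3=\sum\varepsilon(a_1)\,a_2=a,$$
so $\ad(x)=\id$ and $\CoInt(\mathcal{O}(G))=\{\id\}=\Inn(G)$. (Equivalently, Theorem \ref{propoftheta}(3) shows that $\Theta^{-1}(e)$ is a single point.)

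For surjectivity, I would take an arbitrary $(A,R)\in\mathcal{B}(G)$ and produce a preimage explicitly. Since $G$ is commutative, $A$ is just a closed subgroup of $G$ (automatically normal, with $R$ automatically $G$-invariant), and $\mathcal{O}(A)$ is a quotient Hopf algebra of $\mathcal{O}(G)$. Appealing to the analysis of Hopf $2$-cocycles for commutative affine algebraic groups in Section 3 — Theorem \ref{h2cfinabg} together with Remark \ref{abct} and Corollary \ref{decompabelain}, ultimately \cite{G} — every minimal cotriangular structure $R$ on $\mathcal{O}(A)$ with trivial Drinfeld element is of the form $R^{\bar J}=\bar J_{21}^{-1}*\bar J$ for some minimal Hopf $2$-cocycle $\bar J$ for $A$. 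Then I would set $J:=\text{lif}(\bar J)$, a Hopf $2$-cocycle for $G$; because $\mathcal{O}(G)$ is cocommutative, $J$ is automatically invariant, so $[J]\in H_{\text{inv}}^2(G)$. As a Hopf $2$-cocycle for $G$, $J$ is literally $\bar J$, which is minimal on $A$, so by the uniqueness clause of Theorem \ref{support} the support of $J$ equals $A$, and $R^J=\text{lif}(R)$ descends to $R$ on $\mathcal{O}(A)$; hence $\Theta([J])=(\Supp(J),R^J)=(A,R)$. With $\Theta$ bijective, the group structure on $H_{\text{inv}}^2(G)$ transports to $\mathcal{B}(G)$.

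The step I expect to be the main obstacle is surjectivity, and within it two points: first, that on a commutative affine algebraic group $A$ every minimal cotriangular structure with trivial Drinfeld element really is realized by a Hopf $2$-cocycle for $A$ — this is not formal and rests on the explicit description of Section 3 / \cite{G}; and second, that replacing $\bar J$ by its lift over $\mathcal{O}(G)$ does not shrink the support below $A$, which here follows at once from the uniqueness statement in Theorem \ref{support}.
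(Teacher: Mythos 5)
Your proposal is correct and takes essentially the same route as the paper: injectivity via Theorem \ref{propoftheta} together with the triviality of $\Int(G)=\Inn(G)$ (your cocommutativity computation is exactly why this holds), and surjectivity via the classification in Theorem \ref{h2cfinabg} (with Corollary \ref{decompabelain}, Remark \ref{abct}, i.e. \cite{G}), every Hopf $2$-cocycle being automatically invariant since $\mathcal{O}(G)$ is cocommutative. The paper's proof is just a one-line citation of these same ingredients, which you have fleshed out correctly.
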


\begin{proof}
Follows from Theorems \ref{h2cfinabg}, \ref{propoftheta} and the fact $\Int(G)=\Inn(G)$ is trivial.
\end{proof}

Corollary \ref{commbg} implies that for a fixed closed normal commutative subgroup $A$ of $G$, the set of elements $R$ such that $(A,R)\in \mathcal{B}(G)$ is in bijection with the set of non-degenerate elements in $H_{\text{inv}}^2(A)^G$, where $H_{\text{inv}}^2(A)$ is described in Theorem \ref{h2cfinabg}.

\section{Unipotent algebraic groups}

In this section we will assume that $k$ has characteristic $0$.

\begin{theorem}\label{propofthetaunip}
If $U$ is a unipotent algebraic group over $k$ then $\Theta$ is bijective. Moreover, $H_{\text{inv}}^2(U)\cong (\wedge^2\mathfrak{u})^{\mathfrak{u}}$ as affine algebraic groups, where $\mathfrak{u}:=\Lie(U)$, so in particular $H_{\text{inv}}^2(U)$ is commutative.
\end{theorem}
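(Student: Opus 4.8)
The plan is to first establish injectivity of $\Theta$ via Theorem \ref{propoftheta}(2), which reduces to showing $\Int(U)=\Inn(U)$, i.e., every cointernal Hopf automorphism of $\mathcal{O}(U)$ is coinner. Since $U$ is unipotent in characteristic $0$, $\mathcal{O}(U)$ is finitely generated and the relevant automorphisms are controlled by the pro-nilpotent structure: a cointernal automorphism $\ad(x)$ with $x\in(\mathcal{O}(U)^*)^\times$ must preserve the grouplikes $G(\mathcal{O}(U)^*)=U$, and the key point is that for unipotent $U$ one can modify $x$ by a central element so that it becomes grouplike, using that $\mathcal{O}(U)^*_{\mathrm{fin}}$ (or a suitable completion) is a connected pro-unipotent object whose only units up to center are group-like. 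Concretely, I would argue that $\ad(x)$ acts trivially on the associated graded of the coradical filtration, hence is unipotent, and then use a logarithm/exponential argument to write it as $\ad$ of a grouplike. This gives $\Int(U)/\Inn(U)=1$, so $\Theta$ is injective and, combined with Theorem \ref{propoftheta}(1), an injective group homomorphism once we know $\mathcal{B}(U)$ carries the right structure.

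Next I would prove surjectivity of $\Theta$ by combining Theorem \ref{inv} with the classification of minimal cotriangular structures on commutative Hopf algebras. Given $(A,R)\in\mathcal{B}(U)$, the subgroup $A$ is a closed normal commutative subgroup of $U$, hence a vector group $\mathbb{G}_a^m$, and $R$ is a non-degenerate $U$-invariant cotriangular structure with trivial Drinfeld element. By Corollary \ref{decompabelain} (applied with $A_{\rm r}$ trivial) and Theorem \ref{h2cfinabg}(2), such $R$ corresponds to a non-degenerate element $r\in\wedge^2\mathfrak{a}$ with $[J]=[\exp(r/2)]$, and $U$-invariance of $R$ translates into $U$-invariance (equivalently $\mathfrak{u}$-invariance) of $r$ as an element of $\wedge^2\mathfrak{u}$. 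Conversely, any $r\in(\wedge^2\mathfrak{u})^{\mathfrak{u}}$ is a solution of the CYBE by Lemma \ref{invcybe}, its support $\mathfrak{a}$ is an abelian ideal of $\mathfrak{u}$ (proof of Proposition \ref{rscentral}), and it exponentiates to a $U$-invariant Hopf $2$-cocycle $\exp(r/2)$ for $\mathcal{O}(U)$ whose support is the corresponding subgroup $A=\exp(\mathfrak{a})$. Thus $\Theta$ is surjective, hence bijective.

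For the final identification $H^2_{\rm inv}(U)\cong(\wedge^2\mathfrak{u})^{\mathfrak{u}}$ as groups, I would show directly that the assignment $r\mapsto[\exp(r/2)]$ is a group isomorphism $(\wedge^2\mathfrak{u})^{\mathfrak{u}}\to H^2_{\rm inv}(U)$. Well-definedness and injectivity follow from the bijectivity of $\Theta$ and the above correspondence between $r$, its support, and $R^{\exp(r/2)}$; the crucial multiplicativity $\exp(r/2)*\exp(s/2)$ being gauge equivalent to $\exp((r+s)/2)$ for commuting (indeed $\mathfrak{u}$-invariant) $r,s$ requires a Baker--Campbell--Hausdorff / Drinfeld associator argument: since $r,s\in(\wedge^2\mathfrak{u})^{\mathfrak{u}}$, by Lemma \ref{invcybe} all their components pairwise commute within the span $\mathfrak{h}$ of all of them, which is a \emph{nilpotent} ideal of index $\le 2$, so $\log$ of the product can be computed explicitly and the correction terms $[r,[r,s]]=[s,[r,s]]=0$ from Proposition \ref{rscentral} make the product a coboundary of a central element of $\mathcal{O}(U)^*$. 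Finally, the identification as \emph{affine algebraic groups} uses Proposition \ref{trivial1}: $H^2_{\rm inv}(U)$ is an affine algebraic group since $\mathcal{O}(U)$ is finitely presented, and the exponential map is an algebraic isomorphism onto it. The main obstacle is the injectivity/$\Int(U)=\Inn(U)$ step together with the careful BCH bookkeeping showing $\exp$ is a homomorphism modulo central coboundaries; the surjectivity and support computations are comparatively routine given the cited results.
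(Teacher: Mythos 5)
Your surjectivity step and your verification that $r\mapsto[\exp(r/2)]$ is multiplicative (via Proposition \ref{rscentral} and gauging by the central element $\exp(z/8)$) are essentially the paper's argument. The gap is in injectivity, i.e.\ in proving $\Int(U)=\Inn(U)$. Your stated key point --- that $\mathcal{O}(U)^*$ (or a completion) is a connected pro-unipotent object ``whose only units up to center are group-like'' --- is false as written: $\mathcal{O}(U)^*\cong\widehat{U(\mathfrak{u})}$ is a complete local ring, so its unit group is all of $k^{\times}(1+\mathfrak{m})$, far larger than the product of the grouplikes $\exp(\mathfrak{u})=U$ with the central units; read charitably (``any $x$ for which $\ad(x)$ is a Hopf automorphism is grouplike up to a central unit'') it is exactly the statement to be proved, so the argument is circular. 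The subsequent sketch --- $\ad(x)$ is trivial on the associated graded of the coradical filtration, hence unipotent, then ``use a logarithm/exponential argument to write it as $\ad$ of a grouplike'' --- skips the real content. Unipotence of the automorphism alone does not give innerness: for $\mathfrak{u}$ the direct sum of the Heisenberg algebra $\langle a,b,c\rangle$ and a line $\langle z\rangle$, the derivation sending $z\mapsto c$ and $a,b,c\mapsto 0$ is trivial on the abelianization and exponentiates to a unipotent automorphism of $U$ which is not inner. So after setting $d:=\log(\ad(x))=[\log(x),-]$ you still must explain why this derivation of $\mathfrak{u}$ is inner, and nothing in your outline does that; the fact that $d$ is implemented by an element of $\mathcal{O}(U)^*$ must be used in an essential way.

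The missing idea, which is the heart of the paper's proof, is cohomological: $d$ is by construction inner over $\mathcal{O}(U)^*$, so its class maps to zero under $H^1(\mathfrak{u},\mathfrak{u})\to H^1(\mathfrak{u},\mathcal{O}(U)^*)$; one then shows that $\mathfrak{u}$ is a direct summand of $\mathcal{O}(U)^*$ as a $\mathfrak{u}$-module under the adjoint action, which makes this map injective and forces $[d]=0$ in $H^1(\mathfrak{u},\mathfrak{u})$. The splitting is obtained by exhibiting a regular $\mathfrak{u}$-equivariant map $\phi:U\to\mathfrak{u}$ with $d\phi(1)=\id$, namely $\phi=\log$ --- this is precisely where unipotence and characteristic $0$ enter. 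With $d=[u,-]$ for some $u\in\mathfrak{u}$ one gets $\ad(x)=\Ad(\exp(u))$, hence inner. If you insert this step, the remainder of your proposal goes through and coincides with the paper's proof.
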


\begin{proof}
We first show that $\Theta$ is surjective. Let $(A,R)\in\mathcal{B}(U)$. By Theorem \ref{h2cfinabg}, we have $R=\exp(r)$ for some $r\in (\wedge^2 \mathfrak{u})^{\mathfrak{u}}$ (as $A$ is commutative unipotent, i.e., a vector group). Let $J:=\exp(r/2)$. Then $J$ is invariant since $R$ is, and we have $\Theta([J])=(A,R)$.

Next we show that $\Theta$ is injective. By Theorem \ref{propoftheta}, we have to show that $\Int(U)=\Inn(U)$.

Let $\mathfrak{m}$ be the augmentation ideal of $\mathcal{O}(U)^*$. Then $\mathcal{O}(U)^*$ is a complete local ring with unique maximal ideal $\mathfrak{m}$. Thus, if $\alpha$ is a cointernal automorphism of $U$ then $\alpha$ is given by conjugation by an element $a$ in $1+\mathfrak{m}$. Hence the element $\log(a)$ defines a derivation of $\mathfrak{u}$, given by $d(x)=[\log(a),x]$, and it suffices to show that this derivation is inner: $d(x)=[u,x]$ for some $u\in \mathfrak{u}$. Then we may take $a=\exp(u)\in U$, and get that $\alpha$ is inner.
 
The derivation $d$ defines a class $[d]\in H^1(\mathfrak{u},\mathfrak{u})$, and its image in $H^1(\mathfrak{u},\mathcal{O}(U)^*)$ is zero (as $d(x)=[\log(a),x]$, where $\log(a)\in \mathcal{O}(U)^*$). Thus, it suffices to show that $\mathfrak{u}$ is a direct summand in $\mathcal{O}(U)^*$ as a $\mathfrak{u}$-module under the adjoint action.  
In other words, we need to show that the surjective morphism $b:\mathcal{O}(U)\to\mathfrak{u}^*$, given by $\beta\mapsto d\beta(1)$, splits as a morphism of $\mathfrak{u}$-modules, i.e., there exists a $\mathfrak{u}$-morphism $\phi:\mathfrak{u}^*\to \mathcal{O}(U)$ such that $b\circ \phi=\id$. The morphism $\phi$ can be viewed as an element of $\mathcal{O}(U)\otimes \mathfrak{u}$, i.e., as a regular function $\phi:U\to\mathfrak{u}$. Thus it suffices to show that there exists a regular $\mathfrak{u}$-invariant function $\phi:U\to\mathfrak{u}$ such that $d\phi(1)=\id$. But in the unipotent case we can simply take $\phi(X)=\log(X)$. The proof that $\Theta$ is injective is complete.

Finally, let us show that the bijection $$\Theta^{-1}\circ \exp:(\wedge^2\mathfrak{u})^{\mathfrak{u}}\to H_{\text{inv}}^2(U),\,\,\,r\mapsto [J_r],\text{ where } J_r:=\exp(r/2),$$ is a group homomorphism. By Proposition \ref{rscentral}, $[r,[r,s]]=[s,[r,s]]=0$ for any $r,s\in (\wedge^2\mathfrak{u})^{\mathfrak{u}}$. So we have $J_r*J_s=J_{r+s}\exp([r,s]/8)$, i.e., $J_r*J_s$ is gauge equivalent to $J_{r+s}$ by the gauge transformation given by the element $\exp(z/8)$, where $z$ is as in Proposition \ref{rscentral}.

The proof of the theorem is complete. 
\end{proof}

\begin{remark}
Theorem \ref{propofthetaunip} is a special case of Theorem \ref{mainth}, but its proof is simpler and we decided to give it separately.
\end{remark}

\begin{example}\label{heisen} 
Let $U$ be the $3-$dimensional Heisenberg group. Then $\mathfrak{u}$ has a basis $a,b,c$ such that $c$ is central and $[a,b]=c$. It is straightforward to verify that $(\wedge^2\mathfrak{u})^{\mathfrak{u}}=\{(\alpha a+\beta b)\wedge c\mid \alpha,\beta\in k\}$, and by Corollary \ref{propofthetaunip}, $$H_{\text{inv}}^2(U)\cong\{\exp((\alpha a+\beta b)\wedge c) \mid \alpha,\beta\in k\}\cong \mathbb{G}_a^2.$$
\end{example}

\section{Connected affine algebraic groups}

In this section we will assume that $k$ has characteristic $0$.

\subsection{The bijectivity of $\Theta$}

\begin{proposition}\label{directsummand}
Let $G$ be a connected affine algebraic group over $k$ with Lie algebra $\g$. Then $\g$ is a direct summand of $\mathcal{O}(G)^*$ as a $\g$-module (under the adjoint action).
\end{proposition}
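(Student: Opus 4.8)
The plan is to reduce the statement to a cohomology-vanishing fact by exhibiting an explicit $\g$-equivariant splitting of the natural surjection $\mathcal{O}(G)^* \twoheadrightarrow \g^*$ dual to the inclusion $\g \hookrightarrow \mathcal{O}(G)^*$, exactly as in the unipotent case treated in the proof of Theorem \ref{propofthetaunip}, but now allowing $G$ to have a reductive part. Recall that $\g$ sits inside $\mathcal{O}(G)^*$ as the space of primitive elements (tangent vectors at $1$), and the adjoint action of $\g$ on $\mathcal{O}(G)^*$ restricts to the usual adjoint action on $\g$. Dualizing, there is a surjective $\g$-module map $b\colon \mathcal{O}(G)\to \g^*$ given by $\beta\mapsto d\beta(1)$ (the differential at the identity), and the claim is equivalent to the existence of a $\g$-module splitting $\phi\colon \g^*\to \mathcal{O}(G)$, i.e.\ a regular function $\phi\colon G\to \g$ with $d\phi(1)=\mathrm{id}_\g$ that is $\g$-invariant (equivalently $G$-invariant, since $G$ is connected) under the combined action of left translation on $G$ and the adjoint action on the target $\g$.

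First I would handle the reductive quotient. Write $G=G_{\mathrm r}\ltimes G_{\mathrm u}$ with $G_{\mathrm u}$ the unipotent radical and $G_{\mathrm r}$ a Levi subfactor, and correspondingly $\g=\g_{\mathrm r}\ltimes\g_{\mathrm u}$. The key structural input is that $\mathcal{O}(G)$, as a $G_{\mathrm r}$-module (hence as a $\g_{\mathrm r}$-module) under the adjoint/conjugation action, is a direct sum of finite-dimensional modules and is completely reducible in characteristic $0$; therefore $\g$, which is a $\g_{\mathrm r}$-submodule (indeed it is even a subrepresentation of the $G$-action), is a direct summand of $\mathcal{O}(G)$ as a $\g_{\mathrm r}$-module. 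This gives a $\g_{\mathrm r}$-equivariant splitting $\psi_0$. The remaining task is to correct $\psi_0$ to be equivariant for the full $\g$, i.e.\ also for $\g_{\mathrm u}$; the obstruction to averaging over the extra directions lives in $H^1$ of the unipotent part.

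Concretely, I would argue as follows. The difference between two $\g$-module structures amounts to a derivation, so the obstruction to promoting the $\g_{\mathrm r}$-splitting to a $\g$-splitting is a class in $H^1(\g_{\mathrm u}, \mathrm{Hom}_{\g_{\mathrm r}}(\g^*,\mathcal{O}(G)))$, and one wants this to vanish or, better, to produce the splitting directly. The cleanest route is the direct one used for unipotent groups: take $\phi$ to be built from a ``logarithm-type'' regular function. Using the semidirect decomposition, every $X\in G$ factors uniquely as $X=X_{\mathrm r}X_{\mathrm u}$ with $X_{\mathrm r}\in G_{\mathrm r}$, $X_{\mathrm u}\in G_{\mathrm u}$; then $\log(X_{\mathrm u})\in\g_{\mathrm u}$ is a regular function of $X$ with differential at $1$ equal to the projection $\g\to\g_{\mathrm u}$, and it is equivariant for the conjugation action in the $\g_{\mathrm u}$-directions. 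For the $\g_{\mathrm r}$-directions I would instead use the complete reducibility above to pick a $\g_{\mathrm r}$-equivariant regular section $\g_{\mathrm r}^*\to\mathcal{O}(G)$ with the correct differential at $1$ (for instance, using that $\g_{\mathrm r}\subseteq\mathcal{O}(G_{\mathrm r})$ splits off and pulling back along $q\colon\mathcal{O}(G)\to\mathcal{O}(G_{\mathrm r})$, noting $\g_{\mathrm r}$-equivariance of $q$). Adding the two pieces, together with an interaction term that kills the mixed $[\g_{\mathrm r},\g_{\mathrm u}]$-contribution, produces the desired $\g$-invariant $\phi\colon G\to\g$ with $d\phi(1)=\mathrm{id}$.

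The main obstacle I expect is precisely this last gluing: checking that the naive sum of the $\g_{\mathrm u}$-part and the $\g_{\mathrm r}$-part is genuinely $\g$-equivariant, not just separately equivariant for the two subalgebras, and that no further correction is needed beyond a single explicit term. This is where the bracket $[\g_{\mathrm r},\g_{\mathrm u}]\subseteq\g_{\mathrm u}$ enters: one must verify that the failure of equivariance is exactly compensated, which amounts to a Chevalley--Eilenberg $1$-cocycle computation in $H^1(\g_{\mathrm u},-)$ being a coboundary. I would expect to invoke complete reducibility of $\mathcal{O}(G)$ over $\g_{\mathrm r}$ once more at this point to guarantee that the relevant $\g_{\mathrm r}$-invariants are large enough to contain the needed correction, which is the crux of why the characteristic-$0$ hypothesis is used.
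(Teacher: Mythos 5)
Your first step coincides with the paper's: the proposition is indeed equivalent to producing a regular conjugation-equivariant map $\phi\colon G\to\g$ with $d\phi(1)=\id$, via the surjection $b\colon\mathcal{O}(G)\to\g^*$, $\beta\mapsto d\beta(1)$. But from that point on there is a genuine gap: the existence of such a $\phi$ for a general connected $G$ is exactly the hard content of the proposition (in the paper it is Lemma \ref{popov}, communicated by Popov and proved by invoking the Cayley-map machinery of \cite{LPR} to get an equivariant rational map \'etale at $1$ and then making it regular), and your semidirect-product construction does not supply it. Concretely, writing $X=X_{\rm r}X_{\rm u}$, the map $X\mapsto\log(X_{\rm u})$ is equivariant for conjugation by $G_{\rm r}$ but \emph{not} by $G_{\rm u}$ — for $g\in G_{\rm u}$ one has $(gXg^{-1})_{\rm u}=\bigl(X_{\rm r}^{-1}gX_{\rm r}g^{-1}\bigr)\,gX_{\rm u}g^{-1}$, so $\log\bigl((gXg^{-1})_{\rm u}\bigr)\neq \Ad(g)\log(X_{\rm u})$ whenever $g$ fails to commute with $X_{\rm r}$; this is the opposite of what you assert ("equivariant in the $\g_{\rm u}$-directions"). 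Likewise, pulling back a $G_{\rm r}$-equivariant section along $G\to G_{\rm r}$ gives a map that is equivariant only into the quotient $\g/\g_{\rm u}$, not into $\g$, because $\Ad(G_{\rm u})$ does not preserve $\g_{\rm r}$ inside $\g$. So neither building block has the equivariance you need, and the entire burden falls on the unconstructed "interaction term."

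That final gluing step is not a routine correction. The obstruction you describe lives in a first cohomology group of $\g_{\rm u}$ (or of $\g$ relative to $\g_{\rm r}$) with coefficients in $\Hom(\g^*,\Ker b)$, an infinite-dimensional module that is not completely reducible over $\g$; complete reducibility of the $G_{\rm r}$-action gives you nothing about $\g_{\rm u}$-cohomology, and there is no general vanishing theorem to quote. In effect your sketch reformulates the statement to be proved rather than proving it: if a single explicit correction term always existed, the appeal to \cite{LPR} in the paper would be unnecessary. To repair the argument you would either have to carry out this cocycle analysis in detail (and there is no evident reason it closes), or do what the paper does and import the geometric input of \cite[Theorems 10.2, Propositions 10.5, 10.6, Lemma 10.7]{LPR}.
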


\begin{proof}
We will show that the surjective morphism $b:\mathcal{O}(G)\to\g^*$, given by $\beta\mapsto d\beta (1)$, splits as a morphism of $\g$-modules, i.e., there exists a $\g$-morphism $\phi:\g^*\to \mathcal{O}(G)$ such that $b\circ\phi=\id$. The morphism $\phi$ can be viewed as an element of $\mathcal{O}(G)\otimes \g$, i.e., as a regular function $\phi: G\to \g$. Thus it suffices to prove the following lemma, communicated to us by Vladimir Popov. 

\begin{lemma}\label{popov}
There exists a regular $G$-invariant function $\phi: G\to \g$ such that $d\phi(1)=\id$.
\end{lemma}

\begin{proof}
This almost follows from \cite[Theorem 10.2 and Proposition 10.5]{LPR}. More precisely, the map $\phi:=\gamma_{\mathfrak{c}}\circ \theta\circ\gamma_{C}^{-1}$ in \cite[2nd line of p. 961]{LPR} actually is defined at $1$ and moreover is \'etale at $1$ (i.e., its differential is an isomorphism). Indeed, $\theta$ is \'etale at $[1,1]$ (because $\epsilon$ is \'etale at $1$ by \cite[Lemma 10.3]{LPR}, and $\tau$ is an isomorphism \cite[4.3]{LPR}), and by \cite[(3.1)]{LPR}, $\gamma_{\mathfrak{c}}$ and $\gamma_{C}$ are \'etale at $[1,0]$ and $[1,1]$, respectively. 

So by multiplying $\phi$ by the inverse of $d\phi(1)$, we can make $d\phi(1)=\id$. 
The remaining problem is that $\phi$ is only a rational map. To make it regular, we use \cite[Proposition 10.6 and Lemma 10.7]{LPR} to construct a regular function $f$ on $G$ such that $f(1)=1$, and $f\circ\phi$ is regular. Then $d(f\circ\phi)(1)=d\phi(1)=\id$, so we may replace $\phi$ with $f\circ\phi$. We are done.
\end{proof}

This completes the proof of the proposition.
\end{proof}

\begin{proposition}\label{reducinj}
Let $G$ be a connected affine algebraic group over $k$. Then $\Int(G)=\Inn(G)$.
\end{proposition}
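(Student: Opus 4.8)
The goal is to show that for a connected affine algebraic group $G$, every cointernal Hopf automorphism of $\mathcal{O}(G)$ is coinner, i.e., $\Int(G)=\Inn(G)$. I would follow exactly the strategy used in the unipotent case (Theorem \ref{propofthetaunip}), the only difference being that in the unipotent case $\mathcal{O}(U)^*$ was a complete local ring and one could take $\phi = \log$, while here we must invoke Proposition \ref{directsummand} (i.e., Popov's Lemma \ref{popov}) to provide the required splitting. So the plan is: let $\alpha \in \Int(G)$, write $\alpha = \ad(x)$ for some $x \in (\mathcal{O}(G)^*)^\times$, and reduce to showing that the derivation of $\g$ it induces is inner.

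\textbf{Key steps.} First I would pass from $x$ to a derivation. Since $G$ is connected, a cointernal automorphism $\ad(x)$ restricts on the Lie algebra $\g \subseteq \mathcal{O}(G)^*$ to the adjoint action by some element of $\mathcal{O}(G)^*$; more precisely, one gets a derivation $d$ of $\g$ of the form $d(y) = [v,y]$ for some $v \in \mathcal{O}(G)^*$ (e.g.\ $v = \log x$, interpreting this appropriately, or directly from the infinitesimal form of $\ad(x)$). Thus $[d]$ lies in the image of the natural map $H^1(\g,\g) \to H^1(\g, \mathcal{O}(G)^*)$ and maps to zero there. Second, by Proposition \ref{directsummand}, $\g$ is a direct summand of $\mathcal{O}(G)^*$ as a $\g$-module under the adjoint action, so the map $H^1(\g,\g) \to H^1(\g,\mathcal{O}(G)^*)$ is injective; hence $[d] = 0$ in $H^1(\g,\g)$, meaning $d$ is an inner derivation: $d(y) = [u,y]$ for some $u \in \g$. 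Third, I would promote this infinitesimal statement to a statement about the group automorphism: replacing $x$ by $x \cdot \exp(-u)^{\,\vee}$ (i.e.\ composing $\ad(x)$ with the inverse of conjugation by $\exp(u) \in G$) produces a cointernal automorphism whose induced derivation on $\g$ is zero, hence — since $G$ is connected — which is the identity on $\g$ and therefore the identity automorphism of $\mathcal{O}(G)$. Concluding, $\alpha = \ad(x)$ differs from conjugation by the grouplike element $\exp(u) \in G$ by an identity, so $\alpha = \Inn(\exp(u)) \in \Inn(G)$.

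\textbf{Main obstacle.} The substantive mathematical content is Proposition \ref{directsummand}, whose proof rests on Popov's Lemma \ref{popov} and the results of \cite{LPR}; but that is already established, so within this proof the delicate point is purely formal: carefully justifying the third step, namely that a cointernal automorphism of $\mathcal{O}(G)$ that acts trivially on $\g$ must be the identity when $G$ is connected. This uses that $\mathcal{O}(G)^*_{\mathrm{fin}}$ (or the relevant completion) is generated by $\g$ and the grouplikes $G$, and that connectedness forces the grouplike contribution of $x$ into $G$ itself; one must also check compatibility of $\log$/$\exp$ between the group and Lie algebra levels, which in the general (non-unipotent) setting requires a little care since $\mathcal{O}(G)^*$ is no longer a complete local ring. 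I expect this bookkeeping, rather than any deep input, to be where the proof spends its effort.
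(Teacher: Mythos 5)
Your cohomological core (use Proposition \ref{directsummand} to make the class in $H^1(\g,\g)$ vanish, then correct by an inner automorphism) is the same as the paper's, but there is a genuine gap in the way you produce the derivation $d$ and the element $\exp(u)$, and it is not the routine bookkeeping you describe. For a general $x\in(\mathcal{O}(G)^*)^{\times}$ the expression $\log x$ has no meaning: unlike in the unipotent case, $\mathcal{O}(G)^*$ is not a complete local ring, and the differential of $\alpha=\ad(x)$ on $\g$ need not be unipotent --- for instance $x$ may induce conjugation by a semisimple element of $G$, whose differential is a nontrivial semisimple automorphism of $\g$ --- so $\alpha$ has no ``infinitesimal form'' and there is no derivation $d$ whose class in $H^1(\g,\g)$ you can form. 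Likewise, for a general $u\in\g$ the series $\exp(u)$ neither converges in $\mathcal{O}(G)^*$ nor defines a point of $G$ (already for $G=\mathbb{G}_m$ the exponential is not algebraic); it only makes sense when $u$ acts nilpotently. So your first and last steps do not get off the ground as stated.

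The paper closes exactly this gap by a preliminary reduction that your outline omits. It first proves the proposition for reductive $G$ by a different, non-cohomological argument: by the known description of the automorphism group of a reductive group, an automorphism is inner if and only if it does not permute the irreducible representations, and a cointernal automorphism does not. Then, for general connected $G$, it considers the surjection $\psi:\mathcal{O}(G)^*\to\mathcal{O}(G_{\rm r})^*$ with $G_{\rm r}=G/G_{\rm u}$, applies the reductive case to $\psi(x)$, and multiplies $x$ by an element of $G$ to reduce to the situation $\psi(x)=1$, i.e.\ $x\in 1+\Ker\psi$. Only after this reduction do the formal manipulations become legitimate: $\log(x)$ is defined because series with terms in powers of $\Ker\psi$ converge in $\mathcal{O}(G)^*$, the induced automorphism of $\g$ is unipotent so $d=\log(\alpha)$ is an honest derivation, and the element $b\in\g$ produced via Proposition \ref{directsummand} can be exponentiated to realize $\alpha$ as conjugation by an element of $G$. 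If you insert the reductive case and this reduction to $\psi(x)=1$, your outline becomes the paper's proof; without them, the key objects in your steps are undefined.
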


\begin{proof}
Assume first that $G$ is reductive. Then it follows from the well known description of the automorphism group of $G$ that an automorphism of $G$ is inner if and only if it does not permute the irreducible representations of $G$. Hence, the claim follows in this case.

Consider now the general case. Suppose $x\in(\mathcal{O}(G)^*)^{\times}$ is such that $\alpha:=\ad(x)$ is an automorphism of $G$. Let $G_{\rm u}$ be the unipotent radical of $G$, let $G_{\rm r}:=G/G_{\rm u}$, and let $\psi:\mathcal{O}(G)^*\to\mathcal{O}(G_{\rm r})^*$ be the corresponding algebra surjection. Then $\psi(x)$ implements an automorphism of $G_{\rm r}$, which is inner by the above argument. Hence, multiplying $x$ by an element of $G$, we are reduced to the situation when $\psi(x)=1$. Let $I$ be the kernel of $\psi$, then $x\in 1+I$. 

Thus we can define the element 
$\log(x):=\sum (-1)^{n-1}(x-1)^n/n\in I$ (as any series $\sum_n a_n$, where $a_n\in I^n$, converges in the topology of $\mathcal{O}(G)^*$). We have $[\log(x),y]=d(y)$, where $d:=\log(\alpha)$ is a derivation of $\g:=\Lie(G)$. Thus $\log(x)$ defines a class in $H^1(\g,\g)$ which becomes trivial in $H^1(\g,\mathcal{O}(G)^*)$ (where $\g$ acts on $\mathcal{O}(G)^*$ by the adjoint action). Therefore, since by Proposition \ref{directsummand}, $\g$ is a direct summand of $\mathcal{O}(G)^*$ as a $\g$-module, the class of $\log(x)$ in $H^1(\g,\g)$ is zero, i.e., $[\log(x),y]=[b,y]$ for all $y\in \g$ and some element $b\in \g$. Then it is easy to see that $\ad(x)y=\ad(\exp(b))y$ for all $y\in \g$, hence $xgx^{-1}=\exp(b)g\exp(b)^{-1}$ for all $g\in G$. Thus, $\ad(x)$ is an inner automorphism, as desired.
\end{proof}

\begin{theorem}\label{conntheta}
Let $G$ be a connected affine algebraic group over $k$. Then $\Theta$ is bijective.
\end{theorem}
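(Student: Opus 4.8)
The plan is to combine the two parts of Theorem \ref{propoftheta} with Proposition \ref{reducinj}. By Theorem \ref{propoftheta}(2), the fibers of $\Theta$ are the left cosets of $\Int(G)/\Inn(G)$ in $H_{\text{inv}}^2(G)$; hence $\Theta$ is injective precisely when $\Int(G)=\Inn(G)$, which holds for connected $G$ by Proposition \ref{reducinj}. So the only remaining task is surjectivity of $\Theta$: given $(A,R)\in\mathcal{B}(G)$, we must produce an invariant Hopf $2$-cocycle $J$ for $G$ with $\Supp(J)=A$ and $R^J=R$.

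For surjectivity, I would first use the structure of $A$. By Theorem \ref{inv}, $A$ is a closed normal commutative subgroup of $G$, so $A\cong A_{\text{r}}\times A_{\text{u}}$ as in Section 3, and the $G$-invariant minimal cotriangular structure $R$ on $\mathcal{O}(A)$ decomposes (via Corollary \ref{decompabelain} and Theorem \ref{h2cfinabg}) into a reductive part $R_{\text{r}}$ — a non-degenerate $G$-invariant alternating bicharacter on $\widehat{A_{\text{r}}}$ — and a unipotent part $R_{\text{u}}=\exp(r)$ for a non-degenerate $G$-invariant element $r\in\wedge^2\mathfrak{a}_{\text{u}}\oplus(\mathfrak{a}_{\text{r}}\otimes\mathfrak{a}_{\text{u}})$. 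The reductive part already comes from an honest invariant Hopf $2$-cocycle $J_{\text{r}}$ for $A_{\text{r}}$ (a twist by a bicharacter), and lifting it to $G$ through the Levi decomposition $G=G_{\text{r}}\ltimes G_{\text{u}}$ one checks invariance is preserved because the bicharacter data is $G$-invariant. For the unipotent part one sets $J_{\text{u}}:=\exp(r/2)$, which is a Hopf $2$-cocycle for $A$ (hence for $G$), invariant since $r$ is $G$-invariant, with $R^{J_{\text{u}}}=\exp(r)$. Then $J:=J_{\text{u}}*J_{\text{r}}$ (using Lemma \ref{product} to see this is still a Hopf $2$-cocycle, and checking the pieces commute up to gauge so that invariance and the $R$-matrix multiply correctly) should satisfy $\Theta([J])=(A,R)$, once one verifies $\Supp(J)=A$ — i.e., that $R^J$ is non-degenerate on $\mathcal{O}(A)$ and this forces the support to be exactly $A$ rather than a smaller subgroup, which follows from minimality of $(\mathcal{O}(A),R)$ together with Theorem \ref{support}.

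The main obstacle I anticipate is the reductive part: producing a lift of the $G$-invariant bicharacter cocycle on $A_{\text{r}}$ to an \emph{invariant} Hopf $2$-cocycle for $G$, and checking that its support is precisely $A_{\text{r}}$ inside $G$. Over a commutative group this is automatic (Corollary \ref{commbg}), but for nonabelian $G$ one must be careful that the lift $\text{lif}$ through the Levi subfactor does not destroy invariance — here the key input is that $R$ being $G$-invariant means the bicharacter is invariant under the conjugation action of $G$ on $\widehat{A_{\text{r}}}$, and one uses that $A_{\text{r}}$, being the reductive part of a normal commutative subgroup, is itself normal and the twist construction is functorial. The unipotent part, by contrast, is handled essentially as in the proof of Theorem \ref{propofthetaunip}: $\exp(r/2)$ works verbatim because $r\in(\wedge^2\g)^\g$ restricted appropriately, and Proposition \ref{rscentral} controls the failure of additivity.

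\begin{proof}
By Proposition \ref{reducinj}, $\Int(G)=\Inn(G)$, so by Theorem \ref{propoftheta}(2), $\Theta$ is injective. It remains to prove that $\Theta$ is surjective.

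Let $(A,R)\in\mathcal{B}(G)$. Since $A$ is a closed normal commutative subgroup of $G$, we have $A\cong A_{\text{r}}\times A_{\text{u}}$, where $A_{\text{r}}$ is the reductive (diagonalizable) part and $A_{\text{u}}$ the unipotent radical of $A$; both are normal in $G$. By Corollary \ref{decompabelain}, the element $R\in(\mathcal{O}(A)\otimes\mathcal{O}(A))^*$ decomposes, up to the identification $H_{\text{inv}}^2(A)\cong H^2(\widehat{A_{\text{r}}},k^\times)\times(\mathfrak{a}\wedge\mathfrak{a}_{\text{u}})$ of Theorem \ref{h2cfinabg}(3), as a pair consisting of a non-degenerate $G$-invariant alternating bicharacter on $\widehat{A_{\text{r}}}$ and a non-degenerate $G$-invariant element $r\in\mathfrak{a}\wedge\mathfrak{a}_{\text{u}}=\mathfrak{a}_{\text{r}}\otimes\mathfrak{a}_{\text{u}}\oplus\wedge^2\mathfrak{a}_{\text{u}}$; here we use that, since $R$ is $G$-invariant, each of its reductive and unipotent parts is $G$-invariant.

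By Theorem \ref{h2cfinabg}(1), the bicharacter data corresponds to an invariant Hopf $2$-cocycle $L$ for $A_{\text{r}}$ with $R^{L}$ equal to the given bicharacter; since the bicharacter is invariant under the conjugation action of $G$ on $\widehat{A_{\text{r}}}$, the cocycle $L$ is $G$-invariant as a Hopf $2$-cocycle for $A_{\text{r}}$, and hence, viewing $\mathcal{O}(A_{\text{r}})$ as a quotient Hopf algebra of $\mathcal{O}(A)$ and then of $\mathcal{O}(G)$ via the Levi decomposition $G=G_{\text{r}}\ltimes G_{\text{u}}$, the lift $J_{\text{r}}:=\text{lif}(L)$ is an invariant Hopf $2$-cocycle for $G$ with reductive part as prescribed. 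Set $J_{\text{u}}:=\exp(r/2)$, a Hopf $2$-cocycle for $A$, hence for $G$; it is invariant since $r$ is $G$-invariant (so that $r$, as a biderivation, commutes with the multiplication of $\mathcal{O}(A)$), and $R^{J_{\text{u}}}=J_{\text{u},21}^{-1}*J_{\text{u}}=\exp(r)$. Put $J:=J_{\text{u}}*J_{\text{r}}$. By Lemma \ref{product}, $J$ is a Hopf $2$-cocycle for $G$, and since both factors are invariant and their supports $A_{\text{u}}$, $A_{\text{r}}$ are normal and commuting in $G$, $J$ is invariant with $R^{J}=R^{J_{\text{u}}}*R^{J_{\text{r}}}=R$ inside $(\mathcal{O}(A)\otimes\mathcal{O}(A))^*$.

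Finally, $R^{J}=R$ is non-degenerate on $\mathcal{O}(A)$, so by Theorem \ref{support} the support of $J$ is contained in $A$; and since $(\mathcal{O}(A),R)$ is minimal, no proper closed subgroup of $A$ can carry $R^{J}$, so $\Supp(J)=A$. Therefore $\Theta([J])=(A,R)$, proving surjectivity. Combined with injectivity, $\Theta$ is bijective.
\end{proof}
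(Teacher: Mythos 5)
Your reduction of the problem is the same as the paper's: injectivity follows from Theorem \ref{propoftheta}(2) together with Proposition \ref{reducinj}, and the whole content is surjectivity. But your surjectivity argument has a genuine gap at the reductive part. You claim that because the bicharacter $R_{\rm r}$ is invariant under the conjugation action of $G$ on $\widehat{A_{\rm r}}$, a Hopf $2$-cocycle $L$ for $A_{\rm r}$ with $L_{21}^{-1}*L=R_{\rm r}$ is itself ``$G$-invariant'' and its lift to $\mathcal{O}(G)$ is an invariant (lazy) Hopf $2$-cocycle for $G$. That inference does not hold: $G$-invariance of $R^{L}$ only gives that $L^{g}*L^{-1}$ is a symmetric cocycle for each $g\in G$, not that $L$ is fixed by conjugation, and even fixedness of $L$ under conjugation is weaker than the laziness condition $m^{J}=m$ for $\mathcal{O}(G)$, which involves the full convolution structure of $(\mathcal{O}(G)\otimes\mathcal{O}(G))^*$ and not just the action on $\widehat{A_{\rm r}}$. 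The telling symptom is that your surjectivity argument never uses connectedness of $G$: if it were correct as written, it would prove surjectivity of $\Theta$ for every affine algebraic group, including finite groups, contradicting the examples of \cite{GK} (recalled in the remark immediately following this theorem in the paper) where $\Theta$ is not surjective. For finite groups this is precisely the obstruction: a $G$-invariant non-degenerate class on $\widehat{A}$ need not come from an invariant cocycle for $G$.

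The paper closes exactly this gap using connectedness: $A_{\rm r}$ is a diagonalizable normal subgroup, so $\Aut(A_{\rm r})$ is discrete, and hence the connected group $G$ acts trivially on $A_{\rm r}$ by conjugation, i.e.\ $A_{\rm r}$ is \emph{central} in $G$. Then \emph{any} Hopf $2$-cocycle $J_{\rm r}$ for $A_{\rm r}$ with $(J_{\rm r})_{21}^{-1}*J_{\rm r}=R_{\rm r}$ becomes invariant when lifted to $\mathcal{O}(G)$, simply because it is supported on a central subgroup; no invariance of $J_{\rm r}$ itself has to be extracted from invariance of $R_{\rm r}$. Your treatment of the unipotent part ($J_{\rm u}=\exp(r/2)$ with $r\in\mathfrak{a}\wedge\mathfrak{a}_{\rm u}$ $G$-invariant) agrees with the paper and is fine. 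A minor further slip: the reductive and unipotent components of $R$ need not be separately non-degenerate (only $R$ as a whole is non-degenerate on $\mathcal{O}(A)$, cf.\ the mixed examples with support meeting both $\mathfrak{a}_{\rm r}$ and $\mathfrak{a}_{\rm u}$), though nothing essential in your argument depends on that claim.
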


\begin{proof}
By Theorem \ref{propoftheta} and Proposition \ref{reducinj}, $\Theta$ is injective.

We now show that $\Theta$ is surjective. Let $(A,R)\in\mathcal{B}(G)$. We have $A=A_{\rm{r}}\times A_{\rm{u}}$, the product of the reductive and unipotent parts. Since $\Aut(A_{\rm{r}})$ is discrete, $G$ acts on $A_{\rm {r}}$ trivially, i.e., $A_{\rm {r}}$ is central in $G$. Let $\mathfrak{a}:=\Lie(A)$, $\mathfrak{a}_{\rm{u}}:=\Lie(A_{\rm{u}})$. By Theorem \ref{h2cfinabg}, we have $R=R_{\rm{r}}*R_{\rm{u}}$, where $R_{\rm{u}}=\exp(s)$ for some $s\in \mathfrak{a}\wedge \mathfrak{a}_{\rm{u}}$. Let $J_{\rm{u}}:=\exp(s/2)$, and let $J_{\rm{r}}$ be any Hopf $2$-cocycle for $A_{\rm{r}}$ such that $(J_{\rm{r}})_{21}^{-1}*J_{\rm{r}}=R_{\rm{r}}$. Then $J_{\rm{r}}$ is invariant since $A_{\rm {r}}$ is central in $G$. So setting $J:=J_{\rm{r}}*J_{\rm{u}}$, we see that $J$ is $G$-invariant and $\Theta([J])=(A,R)$. 
\end{proof}

\begin{remark}
If $G$ is a finite group of odd order, it follows from \cite[Lemma 6.1]{Da2} that $\Theta$ is surjective. (See also \cite[Corollary 4.6]{GK}.) However, there exist finite groups of even order for which $\Theta$ is not surjective (see \cite[7.3]{GK})
\end{remark}

%


\subsection{The structure of $H_{\text{inv}}^2(G)$ for nilpotent $G$ with a commutative reductive part}
Let $G:=G_{\rm{r}}\times G_{\rm{u}}$ be a (possibly disconnected) {\em nilpotent} affine algebraic group over $k$, with a {\em commutative reductive part}. Let $\g_{\rm{u}}$ be the Lie algebra of $G_{\rm{u}}$, and let $\mathfrak{z}_{\rm{u}}$ be the center of $\g_{\rm{u}}$. Let $\g_{\rm{r}}$ be the Lie algebra of $G_{\rm{r}}$. Then $\g:=\g_{\rm{r}}\oplus \g_{\rm{u}}$ is the Lie algebra of $G$. Note that $\g_{\rm{r}}$ is central in $\g$.

\begin{lemma}\label{rnilpotent}
Let $r\in \g\wedge \g_{\rm{u}}$ be a $\g$-invariant element. Write $r=r'+r''$, where $r'\in \g_{\rm{r}}\otimes \g_{\rm{u}}$ and $r''\in \wedge^2 \g_{\rm{u}}$. Then $r'\in \g_{\rm{r}}\otimes \mathfrak{z}_{\rm{u}}$ and $r''\in (\wedge^2\g_{\rm{u}})^{\g_{\rm{u}}}$.
\end{lemma}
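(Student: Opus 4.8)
The plan is to use that, since $\g_{\rm{r}}$ is central in $\g$, each element of $\g_{\rm{r}}$ acts by $0$ in the adjoint representation, so $\g$-invariance of $r$ amounts to $\g_{\rm{u}}$-invariance. Writing $T_x:=\mathrm{ad}_x\ot\id+\id\ot\mathrm{ad}_x$ for $x\in\g_{\rm{u}}$, this means $T_x(r)=[\Delta(x),r]=0$ for all $x\in\g_{\rm{u}}$. The key point is that $T_x$ preserves the decomposition
$$\wedge^2\g=\wedge^2\g_{\rm{r}}\oplus(\g_{\rm{r}}\wedge\g_{\rm{u}})\oplus\wedge^2\g_{\rm{u}},$$
and in fact kills the summand $\wedge^2\g_{\rm{r}}$: indeed $\mathrm{ad}_x$ annihilates $\g_{\rm{r}}$ by centrality and maps $\g_{\rm{u}}$ into $\g_{\rm{u}}$ since $\g_{\rm{u}}$ is an ideal, so $T_x(e\wedge u)=e\wedge[x,u]$ for $e\in\g_{\rm{r}}$, $u\in\g_{\rm{u}}$, and $T_x(\wedge^2\g_{\rm{u}})\subseteq\wedge^2\g_{\rm{u}}$.

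Since by hypothesis $r\in\g\wedge\g_{\rm{u}}$, its $\wedge^2\g_{\rm{r}}$-component is $0$ and $r=r'+r''$ with $r'\in\g_{\rm{r}}\wedge\g_{\rm{u}}$ (identified with an element of $\g_{\rm{r}}\ot\g_{\rm{u}}$) and $r''\in\wedge^2\g_{\rm{u}}$. Applying $T_x$ and using that $T_x(r')\in\g_{\rm{r}}\wedge\g_{\rm{u}}$ while $T_x(r'')\in\wedge^2\g_{\rm{u}}$ lie in complementary summands of the decomposition above, the single equation $T_x(r)=0$ splits into $T_x(r')=0$ and $T_x(r'')=0$ for every $x\in\g_{\rm{u}}$. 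The second of these is exactly the statement that $r''$ is $\g_{\rm{u}}$-invariant, i.e.\ $r''\in(\wedge^2\g_{\rm{u}})^{\g_{\rm{u}}}$.

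To finish, write $r'=\sum_i e_i\wedge u_i$ with the $e_i$ linearly independent in $\g_{\rm{r}}$ and $u_i\in\g_{\rm{u}}$. Then $T_x(r')=\sum_i e_i\wedge[x,u_i]$, and, because the $e_i$ are linearly independent and $[x,u_i]\in\g_{\rm{u}}$, the vanishing $T_x(r')=0$ forces $[x,u_i]=0$ for all $i$ and all $x\in\g_{\rm{u}}$; hence $u_i\in\mathfrak{z}_{\rm{u}}$ and $r'\in\g_{\rm{r}}\ot\mathfrak{z}_{\rm{u}}$, as desired. I do not anticipate a real obstacle here: once invariance is reduced to $\g_{\rm{u}}$-invariance via centrality of $\g_{\rm{r}}$, the argument is a short decomposition computation, and the only thing needing a little care is keeping the identifications $\g_{\rm{r}}\wedge\g_{\rm{u}}\cong\g_{\rm{r}}\ot\g_{\rm{u}}$ and $\wedge^2\g\subseteq\g\ot\g$ consistent so that the projections onto the three summands are unambiguous.
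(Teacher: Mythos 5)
Your proof is correct and takes essentially the same route as the paper's: bracket with $\Delta(x)$ for $x\in\g_{\rm{u}}$, use that the summands $\g_{\rm{r}}\otimes\g_{\rm{u}}$ and $\wedge^2\g_{\rm{u}}$ are preserved and complementary so the invariance equation splits, and then deduce $[x,u_i]=0$ from a reduced expression for $r'$. You merely spell out the centrality reduction and the linear-independence step that the paper leaves implicit.
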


\begin{proof}
Let $b\in \g_{\rm{u}}$. We have $[\Delta(b), r'+r'']=0$, $[\Delta(b), r'']\in \wedge^2 \g_{\rm{u}}$, and $[\Delta(b), r']=[1\otimes b,r']\in \g_{\rm{r}}\otimes \g_{\rm{u}}$. Thus $[\Delta(b), r'']=[\Delta(b), r']=0$, and it follows that $r'\in \g_{\rm{r}}\otimes \mathfrak{z}_{\rm{u}}$. 
\end{proof} 

\begin{lemma}\label{decompnilpotent}
Let $G=G_{\rm{r}}\times G_{\rm{u}}$ be a nilpotent affine algebraic group with a commutative reductive part as above. The following hold:

\begin{enumerate}
\item
$\Int(G)=\Inn(G)$, so $\Theta:H_{\text{inv}}^2(G) \to \mathcal{B}(G)$ is injective.

\item
The group $H_{\text{inv}}^2(G)$ is commutative and naturally isomorphic to 
$\Hom(\wedge^2 \widehat{G_{\rm{r}}},k^{\times})\times (\mathfrak{g}_{\text{r}}\otimes \mathfrak{z}_{\text{u}})\times (\wedge^2\g_{\rm{u}})^{\g_{\rm u}}$.
\end{enumerate}
\end{lemma}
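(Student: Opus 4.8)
\textbf{Proof proposal for Lemma \ref{decompnilpotent}.}

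The plan is to reduce everything to the structural results already available: Theorem \ref{conntheta}-style bijectivity of $\Theta$ (here in the possibly disconnected nilpotent setting), the computation of $H^2_{\text{inv}}$ for commutative groups in Theorem \ref{h2cfinabg}, and the commutation facts for invariant solutions of the CYBE in Proposition \ref{rscentral} and Lemma \ref{invcybe}. For part (1), the argument of Proposition \ref{reducinj} goes through essentially verbatim once one observes that for $G = G_{\rm r} \times G_{\rm u}$ nilpotent with $G_{\rm r}$ commutative, $\mathcal{O}(G)^* \cong \mathcal{O}(G_{\rm r})^* \otimes \mathcal{O}(G_{\rm u})^*$ and an automorphism $\ad(x)$ induces the identity on $G_{\rm r}$ (since $\Aut(G_{\rm r})$ is discrete and $G$ is connected-by-finite acting trivially on the reductive part because it is central). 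Hence one is reduced to $x \in 1 + I$ where $I = \ker(\mathcal{O}(G)^* \to \mathcal{O}(G_{\rm r})^*)$, and then $\log(x)$ makes sense (the relevant ideal is pro-nilpotent, as $G_{\rm u}$ is unipotent), defines a derivation $d = \log(\ad x)$ of $\g$, and one must show the class of $\log(x)$ in $H^1(\g,\g)$ vanishes. Since $\g_{\rm r}$ is central, $H^1(\g,\g)$-vanishing for this particular class reduces to the unipotent statement already handled in the proof of Theorem \ref{propofthetaunip} (take $\phi(X) = \log(X)$ on the $G_{\rm u}$-factor); I would spell out that $\g$ is a direct summand of $\mathcal{O}(G)^*$ as a $\g$-module here too. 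Then $\Theta$ is injective by Theorem \ref{propoftheta}.

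For part (2), I would first identify $\mathcal{B}(G)$ as a set. Given $(A,R) \in \mathcal{B}(G)$, decompose $A = A_{\rm r} \times A_{\rm u}$ and $R = R_{\rm r} * R_{\rm u}$ with $R_{\rm u} = \exp(s)$, $s \in (\wedge^2\g)^\g \cap (\mathfrak{a} \wedge \mathfrak{a}_{\rm u})$, exactly as in Theorem \ref{conntheta}. Since $A_{\rm r}$ is diagonalizable and normal it is central, so $A_{\rm r} \subseteq Z(G) \cap G_{\rm r} = G_{\rm r}$ (using that $G_{\rm r}$ is commutative and a direct factor); thus the reductive datum $(A_{\rm r}, R_{\rm r})$ lives inside $G_{\rm r}$ and, by Theorem \ref{h2cfinabg}(1) together with Corollary \ref{commbg}, non-degenerate such $R_{\rm r}$ over varying $A_{\rm r} \subseteq G_{\rm r}$ are in bijection with $\Hom(\wedge^2\widehat{G_{\rm r}},k^\times)$ (every alternating bicharacter on $\widehat{A_{\rm r}}$, extended by zero, is an alternating bicharacter on $\widehat{G_{\rm r}}$, and the support is recovered as the radical). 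For the unipotent datum, Lemma \ref{rnilpotent} shows $s = s' + s''$ with $s' \in \g_{\rm r} \otimes \mathfrak{z}_{\rm u}$ and $s'' \in (\wedge^2\g_{\rm u})^{\g_{\rm u}}$; conversely any such pair $(s', s'')$ gives a $\g$-invariant (indeed $G$-invariant, as $G$ is connected-by-finite and one checks invariance under the component group — here I would note $G$ is nilpotent so the component group acts trivially on the relevant invariant subspaces, or alternatively restrict attention to the identity component and argue the finite part acts trivially because it is nilpotent) element of $\mathfrak{a} \wedge \mathfrak{a}_{\rm u}$. So as a set, $\mathcal{B}(G) \cong \Hom(\wedge^2\widehat{G_{\rm r}},k^\times) \times (\g_{\rm r}\otimes\mathfrak{z}_{\rm u}) \times (\wedge^2\g_{\rm u})^{\g_{\rm u}}$, and by Theorem \ref{conntheta} (applied here, or re-proved using part (1) plus the surjectivity construction of Theorem \ref{conntheta}) $\Theta$ transports the group structure of $H^2_{\text{inv}}(G)$ to $\mathcal{B}(G)$.

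It remains to check that this group structure on $\mathcal{B}(G)$ is the product group structure, i.e., that $\Theta^{-1}$ is a group isomorphism onto the asserted product. The reductive factor and the unipotent factor multiply independently because, writing $J = J_{\rm r} * J_{\rm u}$ as in Theorem \ref{conntheta} with $J_{\rm r}$ supported on $G_{\rm r}$ and $J_{\rm u} = \exp(s/2)$, the two types of cocycle commute up to gauge equivalence: $J_{\rm r}$ factors through the commutative quotient-and-subgroup $G_{\rm r}$ and $J_{\rm u}$ restricts trivially there, so $J_{\rm r} * J_{\rm u}$ and $J_{\rm u} * J_{\rm r}$ differ by an invariant coboundary (this mirrors Corollary \ref{decompabelain} and Proposition \ref{uniptw}). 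Within the unipotent factor, additivity $[J_s * J_t] = [J_{s+t}]$ follows from Proposition \ref{rscentral}: $[s,[s,t]] = [t,[s,t]] = 0$ gives $J_s * J_t = J_{s+t}\exp([s,t]/8)$ with $\exp(z/8)$ an invariant gauge (the central degree-$3$ element $z$ of Proposition \ref{rscentral} lies in $U(\mathfrak{z}_{\rm u})$, hence its coboundary is a genuine gauge transformation trivial in $H^2_{\text{inv}}$), exactly as in the last paragraph of the proof of Theorem \ref{propofthetaunip}. The main obstacle I anticipate is the bookkeeping for disconnected $G$: ensuring that ``$G$-invariant'' and ``$\g$-invariant'' agree on the subspaces in question (so that Lemma \ref{rnilpotent}, stated for $\g$-invariants, really computes $\mathcal{B}(G)$) and that $\Theta$ is still surjective when $G$ is only connected-by-finite — this should follow because $G$ nilpotent forces the finite component group to act trivially on $(\wedge^2\g_{\rm u})^{\g_{\rm u}}$ and on $\widehat{G_{\rm r}}$, but it needs to be stated carefully rather than waved through.
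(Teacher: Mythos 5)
Your proposal is correct in substance and uses the same key ingredients as the paper (centrality of the reductive data, Theorem \ref{h2cfinabg}, Lemma \ref{rnilpotent}, and Proposition \ref{rscentral} for the group law), but it differs in organization at two points, and one justification needs repair. For part (1) the paper's argument is a short reduction rather than a re-run of Proposition \ref{reducinj}: since $G_{\rm{r}}$ is central in $G$, central grouplike elements of $\mathcal{O}(G)^*$ are central, so any $\ad(x)\in\Int(G)$ fixes $G_{\rm{r}}$ pointwise; moreover $\ad(x)$ restricts to a cointernal automorphism of $\mathcal{O}(G_{\rm{u}})$, which is inner by the proof of Theorem \ref{propofthetaunip}, and an automorphism of $G=G_{\rm{r}}\times G_{\rm{u}}$ that is trivial on $G_{\rm{r}}$ and conjugation by an element of $G_{\rm{u}}$ on $G_{\rm{u}}$ is inner. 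Your heavier route via $\log(x)$ and $H^1(\g,\g)$ can be made to work (the derivation kills the central $\g_{\rm{r}}$ and restricts to $\g_{\rm{u}}$, where $\phi=\log$ splits the relevant map), but your stated reason that $\ad(x)$ acts trivially on $G_{\rm{r}}$ --- discreteness of $\Aut(G_{\rm{r}})$ together with connectedness considerations --- does not apply here: $G$ may be disconnected and $\ad(x)$ is not conjugation by a group element; the correct (and sufficient) reason is simply that $G_{\rm{r}}$ is central. For part (2), note the paper never invokes Theorem \ref{conntheta}, which is stated only for connected $G$; instead it defines $\psi(K,r',r'')=[K*\exp(r/2)]$ directly and shows $\Theta\circ\psi$ is injective and, using part (1), surjective --- which is exactly the ``patch'' you mention parenthetically, so if you follow your route you must actually carry out that argument rather than cite Theorem \ref{conntheta}. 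Your treatment of the group law matches the paper (and is more detailed): the reductive and unipotent factors commute --- in fact $K$ and $\exp(r/2)$ commute on the nose, being supported on central and commutative data --- and additivity in the unipotent factor follows from Proposition \ref{rscentral} exactly as in Theorem \ref{propofthetaunip}.
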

 
\begin{proof}
(1) Every $\gamma\in \Int(G)$ acts trivially on $G_{\rm{r}}$ (since $G_{\rm{r}}$ is central in $G$), and defines an internal automorphism of $G_{\rm{u}}$ by restriction. Hence,  by Theorem \ref{propofthetaunip}, $\gamma$ is inner.

(2) We have a natural group homomorphism $$\psi:\Hom(\wedge^2 \widehat{G_{\rm{r}}},k^{\times})\times (\mathfrak{g}_{\text{r}}\otimes \mathfrak{z}_{\text{u}})\times (\wedge^2\g_{\rm{u}})^{\g_{\rm u}}\to H_{\text{inv}}^2(G),$$
given by $\psi(K,r',r'')=[K*\exp(r/2)]$, where $r:=r'+r''$ 
and $K$ is viewed as an element in $H_{\text{inv}}^2(G_{\rm{r}})=\Hom(\wedge^2 \widehat{G_{\rm{r}}},k^{\times})$. Thus, we have $(\Theta\circ \psi)(K,r',r'')=(A,R)$, where $A$ is the corresponding commutative normal subgroup to $R:=K_{21}^{-1}*K*\exp(r)$. It is clear that $\Theta\circ \psi$ is injective, hence $\psi$ is injective as well.

Now let us show that $\psi$ is surjective. By Part (1), for this it suffices to prove that $\Theta\circ \psi$ is surjective. Let $(A,R)\in\mathcal{B}(G)$. Write $A=A_{\rm r}\times A_{\rm u}$, and let $\mathfrak{a}_{\rm r}$, $\mathfrak{a}_{\rm u}$ be the Lie algebras of $A_{\rm r}$, $A_{\rm u}$. Then Theorem \ref{h2cfinabg} implies that $R=R_{\rm r}*R_{\rm u}$, where $R_{\rm r}=K_{21}^{-1}*K$ and $R_{\rm u}=\exp(r)$ for some $r\in \mathfrak{a}_{\rm r}\otimes \mathfrak{a}_{\rm u}\oplus \wedge^2 \mathfrak{a}_{\rm u}$. Since $A_{\rm r}\subset G_{\rm r}$ is central in $G$, we have that $K$ and $r$ are $G$-invariant. Hence, by Lemma \ref{rnilpotent}, $r=r'+r''$, where $r'\in \g_{\rm{r}}\otimes \mathfrak{z}_{\rm{u}}$ and $r''\in (\wedge^2\g_{\rm{u}})^{\g_{\rm u}}$.
Thus, $(A,R)=(\Theta\circ \psi)(K,r',r'')$, i.e., $\psi$ is an isomorphism, as desired.
\end{proof}

\subsection{The structure of $H_{\text{inv}}^2(G)$ for connected $G$} 
Let $G$ be a connected affine algebraic group over $k$. Let $G_{\rm{u}}$ be the unipotent radical of $G$, $\g_{\rm{u}}$ its Lie algebra, and $G_{\rm{r}}:=G/G_{\rm{u}}$. Let $Z$ be the center of $G$, $Z_{\text{r}}$  its reductive part and $Z_{\text{u}}$ its unipotent part (so, $Z=Z_{\text{r}}Z_{\text{u}}$). Let $\mathfrak{z}$, $\mathfrak{z}_{\text{r}}$ and $\mathfrak{z}_{\text{u}}$ be the Lie algebras of $Z$, $Z_{\text{r}}$ and $Z_{\text{u}}$, respectively. 

\begin{theorem}\label{mainth} 
Let $G$ be a connected affine algebraic group over $k$. Then the group $H_{\text{inv}}^2(G)$ is commutative and is naturally isomorphic to 
$$\Hom(\wedge^2 \widehat Z_{\text{r}},k^{\times})\times (\mathfrak{z}_{\text{r}}\otimes \mathfrak{z}_{\text{u}})\times (\wedge^2\g_{\rm{u}})^G.$$
\end{theorem}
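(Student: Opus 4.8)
The plan is to decompose an arbitrary invariant Hopf $2$-cocycle for $G$ into a reductive part and a unipotent part, analyze each separately, and assemble the pieces. First I would recall from Proposition~\ref{uniptw} that every $J\in Z_{\text{inv}}^2(G)$ factors as $J = J_{\rm u}*J_{\rm r}$, where $J_{\rm r}$ is lifted from a Hopf $2$-cocycle on $G_{\rm r}$ and $J_{\rm u}$ is trivial on $\mathcal O(G_{\rm r})$. For the reductive part: by Corollary~\ref{idcompreductive} the support of $J_{\rm r}$ has identity component a torus, and since $G$ is connected it acts trivially on the component group of any normal subgroup and trivially (by discreteness of automorphisms of diagonalizable groups) on the torus; moreover invariance forces the support of $J_{\rm r}$ to be central, landing it inside $Z_{\rm r}$. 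By Theorem~\ref{h2cfinabg}(1) the relevant data is then a class in $H^2(\widehat{Z_{\rm r}},k^\times)\cong\Hom(\wedge^2\widehat{Z_{\rm r}},k^\times)$, and conversely any such class is $G$-invariant because it is supported in the center, so it yields a genuine invariant cocycle. This should give the first factor cleanly.

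Next I would handle the unipotent part $J_{\rm u}$. The key point is that $J_{\rm u}$ is a unipotent cocycle, so $\rho := 2\log(J_{\rm u})$ is a well-defined Hochschild $2$-cocycle of $\mathcal O(G)$ with trivial coefficients, and by Hochschild--Kostant--Rosenberg its class lives in $\wedge^2\g$; invariance of $J_{\rm u}$ translates into $G$-invariance of the associated element $r\in(\wedge^2\g)^G$, and triviality on $\mathcal O(G_{\rm r})$ forces $r\in\g\wedge\g_{\rm u}$. So I would argue (as in the nilpotent case, Lemmas~\ref{rnilpotent}, \ref{decompnilpotent}) that the map $r\mapsto[\exp(r/2)]$ gives a bijection between a suitable subspace of $(\wedge^2\g)^G\cap(\g\wedge\g_{\rm u})$ and the unipotent part of $H^2_{\text{inv}}(G)$. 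The decomposition $\g\wedge\g_{\rm u} = (\g_{\rm r}\otimes\g_{\rm u})\oplus(\wedge^2\g_{\rm u})$ — here $\g_{\rm r}$ denotes a Levi of $\g$ — splits $r = r' + r''$, and I expect $G$-invariance to pin down $r'$ to $\mathfrak z_{\rm r}\otimes\mathfrak z_{\rm u}$ (the $\g_{\rm r}$-invariance of the first tensor leg forces the leg into $\mathfrak z_{\rm r}$, and $\g_{\rm u}$-invariance with the argument of Lemma~\ref{rnilpotent} forces the $\g_{\rm u}$-leg into $\mathfrak z_{\rm u}$) and $r''\in(\wedge^2\g_{\rm u})^G$. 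This accounts for the second and third factors.

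For the group-law assertion — that $H^2_{\text{inv}}(G)$ is commutative and that these three pieces combine as a \emph{direct product} — I would use Theorem~\ref{conntheta}: $\Theta$ is bijective, so it suffices to produce a bijective group homomorphism
$$\psi: \Hom(\wedge^2\widehat{Z_{\rm r}},k^\times)\times(\mathfrak z_{\rm r}\otimes\mathfrak z_{\rm u})\times(\wedge^2\g_{\rm u})^G \longrightarrow H^2_{\text{inv}}(G),$$
$$\psi(K,r',r'') = [K*\exp((r'+r'')/2)],$$
check injectivity via $\Theta\circ\psi$ (the supports and $R$-matrices determine the triple), and surjectivity via the decomposition analysis of the previous paragraph together with the surjectivity half of Theorem~\ref{conntheta}. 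That $\psi$ is a homomorphism reduces to showing the cross-terms vanish: the reductive factor commutes with everything because it is supported in the center and $\exp$ of a central element is grouplike, while the product $\exp(r'/2)*\exp(s'/2)*\exp(r''/2)*\exp(s''/2)$ rearranges up to strong gauge equivalence by Proposition~\ref{rscentral} (the bracket $[r,s]$ is a coboundary $\Delta(z)-z\otimes1-1\otimes z$ with $z$ central, hence gauge-trivial), exactly as in the proof of Theorem~\ref{propofthetaunip}. The main obstacle, I expect, is the bookkeeping needed to justify that the Levi decomposition $\g=\g_{\rm r}\oplus\g_{\rm u}$ is compatible with all of this — in particular that $G$-invariance (not just $\g$-invariance) of $r$ is what one gets, that the support subgroup of the reductive part really sits inside the honest center $Z_{\rm r}$ of $G$ rather than merely some Levi, and that the splitting $r=r'+r''$ together with the identification of the pieces is insensitive to the choice of Levi; getting these normal-subgroup and centrality claims exactly right is where care is required, whereas the cohomological and exponential-map manipulations are routine given the earlier results.
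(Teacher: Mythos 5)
Your overall skeleton coincides with the paper's: you define the same map $\psi(K,r',r'')=[K*\exp((r'+r'')/2)]$, use the bijectivity of $\Theta$ (Theorem \ref{conntheta}), check injectivity through $\Theta\circ\psi$, and get the group law and commutativity from Proposition \ref{rscentral} as in Theorem \ref{propofthetaunip}. The genuine gap is in your surjectivity analysis, where you decompose the invariant cocycle $J$ itself as $J_{\rm u}*J_{\rm r}$ and then apply invariance-dependent results to the two pieces. For noncommutative $G$ neither piece need be invariant: $J_{\rm r}=\text{lif}(\text{res}(J))$ is a lift along a Hopf quotient, and the paper notes in Subsection 2.2 that lifting does not preserve invariance (Proposition \ref{uniptw} only asserts the pieces are cocycles). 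So you cannot invoke Corollary \ref{idcompreductive} for $J_{\rm r}$, and the assertion that ``invariance of $J_{\rm u}$ translates into $G$-invariance of $r$'' is unjustified; likewise, identifying the strong gauge class of $J_{\rm u}$ with a Hochschild--Kostant--Rosenberg class in $\wedge^2\g$ is only established in the paper for commutative groups (Corollary \ref{decompabelain}), and extending it to $\mathcal{O}(G)$ with $G$ noncommutative would itself require an argument relating Hochschild coboundaries to gauge transformations.

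The second concrete problem is your centrality claim for the reductive part. Even granting that you work with $\text{res}(J)$ on $G_{\rm r}=G/G_{\rm u}$, the conclusion available is that its support is central in $G/G_{\rm u}$, which says nothing about $Z(G)$: for $G=\mathbb{G}_m\ltimes\mathbb{G}_a$ the quotient is abelian while $Z(G)$ is trivial, so ``central in the Levi quotient'' does not land you in $Z_{\rm r}$. The containment in $Z_{\rm r}$ must come from normality in $G$, which is exactly what the paper exploits by arguing on the $\mathcal{B}(G)$ side rather than decomposing $J$: given $(A,R)\in\mathcal{B}(G)$ (with $A=\Supp(J)$ normal by Theorem \ref{inv}), the reductive part $A_{\rm r}$ is characteristic in $A$, hence normal in $G$, and since $\Aut(A_{\rm r})$ is discrete and $G$ is connected, $A_{\rm r}$ is central, so $A\subseteq G':=Z_{\rm r}G_{\rm u}$; then Lemma \ref{decompnilpotent} for the nilpotent group $G'$ (commutative reductive part) produces $(K,r',r'')$ with $(\Theta\circ\psi)(K,r',r'')=(A,R)$, and the $G$-invariance of $R$ (automatic, unlike invariance of $J_{\rm u}$) upgrades $r'$ to $\mathfrak{z}_{\rm r}\otimes\mathfrak{z}_{\rm u}$ and $r''$ to $(\wedge^2\g_{\rm u})^G$. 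This route also dissolves your worry about the choice of Levi, since $Z_{\rm r}$ is canonical. To repair your proof, replace the two paragraphs decomposing $J$ by this argument on pairs $(A,R)$.
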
 

\begin{proof} 
We have a natural group homomorphism $$\psi:\Hom(\wedge^2 \widehat Z_{\text{r}},k^{\times})\times (\mathfrak{z}_{\text{r}}\otimes \mathfrak{z}_{\text{u}})\times (\wedge^2\g_{\rm{u}})^G\to H_{\text{inv}}^2(G),$$
given by $\psi(K,r',r'')=[K*\exp(r/2)]$, where $r:=r'+r''$ 
and $K$ is viewed as an element in $H_{\text{inv}}^2(Z_{\rm{r}})=\Hom(\wedge^2 \widehat{Z_{\rm{r}}},k^{\times})$.
Thus, we have $(\Theta\circ \psi)(K,r',r'')=(A,R)$, where $A$ is the corresponding commutative normal subgroup to $R:=K_{21}^{-1}*K*\exp(r)$. It is clear that $\Theta\circ \psi$ is injective, hence $\psi$ is injective as well.
  
Let us now prove that $\psi$ is surjective. By Theorem \ref{conntheta}, $\Theta$ is bijective, so it suffices to show that $\Theta\circ \psi$ is surjective. Let $(A,R)\in\mathcal{B}(G)$. Write $A=A_{\rm r}\times A_{\rm u}$.
Then $A$, $A_{\text{r}}$ and $A_{\text{u}}$ are normal in $G$. Hence, $A_{\text{r}}$ is central in $G$ (since the group of automorphisms of $A_{\text{r}}$ is discrete, while $G$ is connected). Hence, $A\subset G':=Z_{\text{r}}G_{\rm{u}}$, and $(A,R)$ is an element of $\mathcal{B}(G')$. 
Since $G'$ is nilpotent with a {\em commutative} reductive part, it follows from Lemma \ref{decompnilpotent} that $(A,R)=(\Theta\circ \psi)(K,r',r'')$, where $K$ is a Hopf $2$-cocycle for $Z_{\rm r}$, $r'\in \mathfrak{z}_{\rm{r}}\otimes \g_{\rm{u}}$, and $r''\in (\wedge^2\g_{\rm{u}})^{\g_{\rm u}}$. Moreover, $R$ is $G$-invariant, hence so are $r'$, $r''$. Therefore, $r'\in \mathfrak{z}_{\rm{r}}\otimes \mathfrak{z}_{\rm{u}}$ and $r''\in (\wedge^2\g_{\rm{u}})^G$, i.e., $\psi$ is an isomorphism, as desired.
\end{proof}

\begin{remark}
If $G$ is a reductive algebraic group over $k$, Theorem \ref{mainth} says that $H^2_{inv}(G)=H^2(\widehat{Z},k^{\times})=\Hom(\wedge^2\widehat {Z},k^{\times})$, which also follows for $k=\mathbb{C}$ from  \cite[Theorem 7]{NT}. In particular, if $G$ is semisimple then $H_{\text{inv}}^2(G)$ is a finite group, and if the center of $G$ is cyclic then $H_{\text{inv}}^2(G)$ is the trivial group.
\end{remark}

\begin{remark}
The construction of Hopf $2$-cocycles for affine algebraic groups over $k$ of characteristic $0$ from solutions to the classical Yang-Baxter equation was introduced in \cite{EG1} (see, e.g., \cite[Theorem 5.5]{EG1}), following a method of Drinfeld. The map $r\mapsto J_r:=\exp(r/2)$ used throughout this paper is a special case of this construction in the case when $r$ spans an abelian Lie algebra.
\end{remark}

\end{document}